\g@addto@macro \normalsize {%
 \setlength\abovedisplayskip{7pt}%
 \setlength\belowdisplayskip{6pt}%
}
\newtheorem{thm}[equation]{Theorem}
\newtheorem{lem}[equation]{Lemma}
\newtheorem{cor}[equation]{Corollary}
\newtheorem{prop}[equation]{Proposition}
\theoremstyle{remark}
\newtheorem{rem}[equation]{Remark}
\newtheorem{fact}[equation]{Fact}
\newtheorem{defn}[equation]{Definition}
\numberwithin{equation}{section}
\newcommand{\gb}{\beta}
\newcommand{\ga}{\alpha}
\newcommand{\gl}{\nu}
\newcommand{\gD}{\Delta}
\newcommand{\gt}{\theta}
\newcommand{\gS}{\Sigma}
\newcommand{\eps}{\varepsilon}
\newcommand{\fa}{{\mathfrak a}}             
\newcommand{\fb}{{\mathfrak b}}
\newcommand{\fg}{{\mathfrak g}}
\newcommand{\fh}{{\mathfrak h}}
\newcommand{\fk}{{\mathfrak k}}
\newcommand{\fm}{{\mathfrak m}}
\newcommand{\fn}{{\mathfrak n}}
\newcommand{\fp}{{\mathfrak p}}
\newcommand{\ft}{{\mathfrak t}}
\newcommand{\fs}{{\mathfrak s}}
\newcommand{\fy}{{\mathfrak y}}
\newcommand{\f}{\mathfrak}
\newcommand{\cga}{\alpha^\ssv}
\newcommand{\R}{\mathbb{R}}          
\newcommand{\C}{\mathbb{C}}          
\newcommand{\Z}{\mathbb{Z}}
\newcommand{\Ad}{\mathrm{Ad}}
\newcommand{\Cal}{\mathcal}
\newcommand{\Hom}{\operatorname{Hom}}
\renewcommand{\Im}{\mathrm{Im}}
\newcommand{\IP}[2]{\langle#1 , #2\rangle}     
\newcommand{\Sol}{\mathrm{Sol}}
\newcommand{\Pol}{\mathrm{Pol}}
\newcommand{\To}{\longrightarrow}
\newcommand{\Diff}{\mathrm{Diff}}
\newcommand{\Irr}{\mathrm{Irr}}
\newcommand{\Rep}{\mathrm{Rep}}
\newcommand{\triv}{\mathrm{triv}}
\newcommand{\fin}{\mathrm{fin}}
\newcommand{\sym}{\mathrm{sym}}
\newcommand{\D}{\Cal{D}}
\newcommand{\wG}{\widetilde{G}}
\newcommand{\wK}{\widetilde{K}}
\newcommand{\wM}{\widetilde{M}}
\newcommand{\wB}{\widetilde{B}}
\newcommand{\wm}{\widetilde{m}}
\newcommand{\wchi}{\widetilde{\chi}}
\newcommand{\wrho}{\widetilde{\rho}}
\newcommand{\wSL}{\widetilde{SL}}
\newcommand{\dpin}{d\pi_n}
\newcommand{\xvarphi}{\varphi_\xi}
\newcommand{\pp}{\textnormal{\mbox{\smaller($+$,$+$)}}}
\newcommand{\pmi}{\textnormal{\mbox{\smaller($+$,$-$)}}}
\newcommand{\mip}{\textnormal{\mbox{\smaller($-$,$+$)}}}
\newcommand{\mm}{\textnormal{\mbox{\smaller($-$,$-$)}}}
\newcommand{\pmpm}{\textnormal{\mbox{\smaller($\pm$,$\pm$)}}}
\newcommand{\ssv}{{\scriptscriptstyle \vee}}
\newcommand{\XcY}{X\circ Y}
\providecommand*{\donothing}[1]{}
\begin{document}

\baselineskip=16pt
\tabulinesep=1.2mm


\title[]{On the space of $K$-finite solutions to 
intertwining differential operators}

\author{Toshihisa Kubo}
\author{Bent {\O}rsted}

\address{Faculty of Economics, 
Ryukoku University,
67 Tsukamoto-cho, Fukakusa, Fushimi-ku, Kyoto 612-8577, Japan}
\email{toskubo@econ.ryukoku.ac.jp}

\address{Department of Mathematics, 
Aarhus University,
Ny Munkegade 118 DK-8000 Aarhus C Denmark}
\email{orsted@imf.au.dk}

\subjclass[2010]{22E46, 17B10}
\keywords{intertwining differential operators,
generalized Verma modules, 
duality theorem,
$K$-finite solutions, 
Peter--Weyl type formulas,
small representations,
Torasso's representation,
hypergeometric polynomials}


\maketitle


\begin{abstract} 
In this paper we give Peter--Weyl type formulas for
the space of $K$-finite solutions to intertwining differential operators
between degenerate principal series representations.
Our results generalize a result of Kable for 
conformally invariant systems.
The main idea is based on the duality theorem between
intertwining differential operators and homomorphisms between
generalized Verma modules.
As an application we uniformly realize 
on the solution spaces of intertwining differential operators
all small representations
of $\widetilde{SL}(3,\R)$ attached to the minimal nilpotent orbit.
\end{abstract}


\section{Introduction}\label{sec:intro}


Let $G$ be a real reductive Lie group and
$P$ a parabolic subgroup of $G$. 
Given  
finite-dimensional representations $W$ and $E$ of $P$, 
write $\Cal{W}=G\times_{P}W$ and 
$\Cal{E} = G \times_{P} E$, the homogeneous vector bundles over $G/P$ with fibers
$W$ and $E$, respectively. 
The aim of this paper 
is to understand the representation realized on the solution space of
an intertwining differential operator
$\Cal{D}\colon C^\infty(G/P, \Cal{W}) \to C^\infty(G/P,\Cal{E})$ between 
the spaces of smooth sections for $\Cal{W}$ and $\Cal{E}$.

Realization of a representation on the space of solutions
to intertwining differential operators 
has been studied by a number of people such as
Kostant (\cite{Kostant90}),
Binegar--Zierau (\cite{BZ91, BZ94}),
{\O}rsted (\cite{Orsted00}),
Kobayashi--{\O}rsted (\cite{KO03a}, \cite{KO03b}, \cite{KO03c}),
Kable (\cite{Kable11}, \cite{Kable12A}, \cite{Kable12C}, \cite{Kable12B}),
Wang (\cite{Wang05}),
Sepanski and his collaborators (\cite{FS13}, \cite{HSS12}, \cite{HSS15}, \cite{SF15}),
among others.
For instance, in \cite{KO03a, KO03b, KO03c}, Kobayashi--{\O}rsted realized
the minimal representation of $O(p,q)$ in the solution space to the Yamabe operator
and studied it in great depth from the 
various perspectives of
conformal geometry, branching law, and harmonic analysis,
whereas Kable in \cite{Kable12A} used the Peter--Weyl theorem
to realize the minimal representation of $G_2$ 
on the common solution space 
of a system of differential operators constructed in \cite{BKZ08}.
In this paper we take the approach of Kable to
understand the $K$-type formula of the representations realized on 
the space of $K$-finite solutions to intertwining differential operators.
For convenience we refer to a general $K$-type decomposition formula 
such as \eqref{eqn:K-intro} below as a \emph{Peter--Weyl type formula}
(or \emph{PW type formula} for short).


In \cite{Kable11}, (for linear group $G$),
Kable gave a Peter--Weyl type formula for the space 
of $K$-finite solutions to a system of differential operators 
that are equivariant under an action of 
the Lie algebra of $G$.
Such a system of operators is called a \emph{conformally invariant system}
(\cite{BKZ08, BKZ09}).
From the viewpoint of intertwining operators
a conformally invariant system is
an intertwining differential operator from a line bundle to a vector bundle.
In this paper
we give a Peter--Weyl type formula 
for the space of $K$-finite solutions to
intertwining differential operators from general vector bundles
(Theorem \ref{prop:SVW});
the case from a line bundle to a vector bundle
is also further investigated
(Proposition \ref{prop:PWSol} and Theorems \ref{thm:intro1} and \ref{thm:Ksol}).
In addition we also consider
the common solution space of a system of intertwining differential operators 
(Section \ref{subsec:sol}).

Our main tools are the Peter--Weyl theorem
for the space $C^\infty(G/P, \Cal{W})_K$ of $K$-finite sections (\eqref{eqn:PW})
and the duality theorem between intertwining differential operators and homomorphisms 
between generalized Verma modules (Theorem \ref{thm:duality}).
There are three main cases.

\begin{enumerate}
\item VV case: a vector bundle to a vector bundle (Theorem \ref{prop:SVW})
\item LV case: a line bundle to a vector bundle  \hspace{0.4cm} 
(Theorem \ref{thm:Ksol})
\item LL case: a line bundle to a line bundle \hspace{0.85cm} 
(Theorem \ref{thm:intro1})
\end{enumerate}
We next briefly describe the Peter--Weyl type formula for the LL case.


Let $G$ and $P$ be as above and fix a maximal compact subgroup $K$ of $G$.
Write $P=MAN$ for a Langlands decomposition of $P$.
Let $\fg_0$ be the Lie algebra of $G$ and we denote by
$\fg$ and $\Cal{U}(\fg)$ the complexification of $\fg_0$ and the 
universal enveloping algebra of $\fg$, respectively.
A similar convention is employed also for the subgroups $K$ and $P$.
Given characters $\chi_{\triv}, \chi$ of $M$ with $\chi_{\triv}$ the trivial character,
and also $\lambda, \nu$ of $A$,
let $\C_{\triv,\lambda}$ (resp.\ $\C_{\chi,\nu}$)
denote the one-dimensional representation
$(\chi_{\triv}\otimes (\lambda+\rho), \C)$ 
(resp.\ $(\chi\otimes (\nu+\rho), \C)$)
of $P=MAN$ with trivial $N$ action,
where $\rho$ is half the sum of the positive roots.
We write $\Cal{L}_{\triv, \lambda}$ and $\Cal{L}_{\chi, \nu}$
for the line bundles over $G/P$ with fibers 
$\C_{\triv, \lambda}$ and $\C_{\chi, \nu}$, respectively.
We realize 
the degenerate principal series representation
$I_P(\chi_\triv, \lambda)$ 
on the space of smooth sections
$C^\infty(G/P, \Cal{L}_{\triv, \lambda})$ 
for $\Cal{L}_{\triv, \lambda}$.
The representation
$I_P(\chi, \nu)$ is defined similarly.
We write $\Diff_G(I_P(\chi_\triv, \lambda), I_P(\chi, \nu))$ for
the space of intertwining differential operators 
$\Cal{D}\colon C^\infty(G/P, \Cal{L}_{\triv, \lambda}) 
\to C^\infty(G/P,\Cal{L}_{\chi, \nu})$.
It follows from the duality theorem that
any differential operator
$\Cal{D} \in \Diff_G(I_P(\chi_\triv, \lambda), I_P(\chi, \nu))$ 
is of the form $\Cal{D} = R(u)$
for some $u \in \Cal{U}(\fg)$, where $R$ denotes the infinitesimal 
right translation of $\Cal{U}(\fg)$. 
To emphasize the element $u$, we write $\Cal{D}_u$ 
for the differential operator such that $\Cal{D}_u = R(u)$.

Let $\Irr(K)$ and $\Irr(M/M_0)$
be the sets of equivalence classes of irreducible representations 
of $K$ and the component group $M/M_0$ of $M$,
respectively.
It follows from Lemma \ref{lem:tensor2} in Section \ref{subsec:TOP}
that, for $\Cal{D}_u \in \Diff_G(I_P(\chi_\triv, \lambda), I_P(\chi, \nu))$
and $\xi \in \Irr(M/M_0)$, we have
$\Cal{D}_u\otimes \mathrm{id}_{\xi} \in
\Diff_G(I_P(\xi, \lambda), I_P(\chi\otimes\xi, \nu))$.
We remark that the representation $\xi \in \Irr(M/M_0)$ 
needs not be a character, 
as $G$ is not necessarily linear 
(see \eqref{eqn:M-irred} below for the case that $M=M/M_0$).

For $V_\delta:=(\delta, V) \in \Irr(K)$ and $u \in \Cal{U}(\fg)$,
we define a subspace $\Sol_{(u)}(\delta)$ of $V_\delta$ by
\begin{equation}\label{eqn:SolK-intro}
\Sol_{(u)}(\delta):= \{v \in V_\delta :
d\delta(\tau( u^\flat) )v = 0 \}.
\end{equation}
Here $d\delta$ denotes the differential of $\delta$, $\tau$ denotes the 
conjugation of $\fg$ with respect to $\fg_0$,
and $u^\flat$ is the element of $\Cal{U}(\fk)$ such that 
$u^\flat \otimes \mathbb{1}_{-\lambda-\rho} = u \otimes \mathbb{1}_{-\lambda-\rho}$ 
in $\Cal{U}(\fg) \otimes_{\Cal{U}(\fp)}\C_{-\lambda-\rho}$.
It will be shown that the space 
$\Sol_{(u)}(\delta)$ is a $K\cap M$-representation 
(see Lemma \ref{lem:SolM}).

Given $\Cal{D}_u \in \Diff_G(I_P(\chi_\triv, \lambda), I_P(\chi, \nu))$
and $\xi \in \Irr(M/M_0)$,
we write $\Cal{S}ol_{(u; \lambda)}(\xi)_K$
for the space of the $K$-finite solutions to $\Cal{D}_u \otimes \mathrm{id}_\xi$.
With the notation we obtain the following
as a specialization of Theorem  \ref{thm:Ksol}.
(For some details see Section \ref{subsec:recipe1}.)

\begin{thm}
[PW type formula for the LL case]
\label{thm:intro1}
Let $\Cal{D}_u \in \Diff_G(I_P(\chi_\triv, \lambda), I_P(\chi, \nu))$
and $\xi \in \Irr(M/M_0)$.
Then
the space $\Cal{S}ol_{(u; \lambda)}(\xi)_K$
of $K$-finite solutions to 
$\Cal{D}_u\otimes \mathrm{id}_{\xi}$
is decomposed as a $K$-representation as
\begin{equation}\label{eqn:K-intro}
\Cal{S}ol_{(u; \lambda)}(\xi)_K
\simeq \bigoplus_{\delta \in \Irr(K)}
V_\delta \otimes \mathrm{Hom}_{K\cap M}\left(
\Sol_{(u)}(\delta), \xi\right).
\end{equation}
\end{thm}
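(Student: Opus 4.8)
The plan is to deduce the formula by running the Peter--Weyl theorem \eqref{eqn:PW} through the operator $\Cal{D}_u\otimes\mathrm{id}_\xi$, in the spirit of Kable's treatment of conformally invariant systems but keeping track of the twist by $\xi$ and of the element $u^\flat\in\Cal{U}(\fk)$. Using $G=KP$ with $K\cap P=K\cap M$, so that $G/P\cong K/(K\cap M)$ as $K$-spaces, \eqref{eqn:PW} gives
\[
C^\infty(G/P,\Cal{L}_{\xi,\lambda})_K\simeq\bigoplus_{\delta\in\Irr(K)}V_\delta\otimes\Hom_{K\cap M}(V_\delta,\xi),
\]
together with the analogous statement for the target of $\Cal{D}_u\otimes\mathrm{id}_\xi$, namely $C^\infty(G/P,\Cal{L}_{\chi\otimes\xi,\nu})$, whose fiber restricted to $K\cap M$ is $\xi\otimes\C_\chi$. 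Since $\Cal{D}_u\otimes\mathrm{id}_\xi$ is $G$-equivariant it is in particular $K$-equivariant, so by Schur's lemma it preserves each $\delta$-isotypic block and acts there as $\mathrm{id}_{V_\delta}\otimes T_\delta$ for a single linear map $T_\delta\colon\Hom_{K\cap M}(V_\delta,\xi)\to\Hom_{K\cap M}(V_\delta,\xi\otimes\C_\chi)$ of multiplicity spaces. The space of $K$-finite solutions is then $\bigoplus_\delta V_\delta\otimes\ker T_\delta$, and the whole point is to identify $T_\delta$ and its kernel. (This is essentially what also proves the more general Theorem \ref{thm:Ksol}.)

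To pin down $T_\delta$ I would combine two reductions. First, on sections $R(u)=R(u^\flat)$: for a section $f$ of $\Cal{L}_{\triv,\lambda}$ the operators $R(X)$ with $X\in\fn$ and $X\in\fm$ annihilate $f$ and $R(H)=-(\lambda+\rho)(H)\,\mathrm{id}$ for $H\in\fa$, so the assignment $z\mapsto R(z)f$ factors through $\Cal{U}(\fg)\otimes_{\Cal{U}(\fp)}\C_{-\lambda-\rho}$, in which $u$ and $u^\flat$ have the same image by the very definition of $u^\flat$; and since $\xi$ is infinitesimally trivial the same computation, hence the same $u^\flat$, applies after twisting by $\xi$. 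Second, I would realize $\varphi\in\Hom_{K\cap M}(V_\delta,\xi)$ as the matrix-coefficient section $k\mapsto\varphi(\delta(k)^{-1}v)$ and differentiate; a direct computation shows that $R(u^\flat)$ carries this section to (a sign times) $k\mapsto\big(\varphi\circ d\delta(\tau(u^\flat))\big)(\delta(k)^{-1}v)$, so $T_\delta$ is, up to sign, the precomposition $\varphi\mapsto\varphi\circ d\delta(\tau(u^\flat))$, the output indeed satisfying the $\xi\otimes\C_\chi$-equivariance because $d\delta(\tau(u^\flat))$ is $K\cap M$-semi-invariant of weight $\chi^{\mp1}$. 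Hence $\ker T_\delta=\{\varphi\in\Hom_{K\cap M}(V_\delta,\xi):\varphi|_{\Im d\delta(\tau(u^\flat))}=0\}$, which, Lemma \ref{lem:SolM} guaranteeing that $\Sol_{(u)}(\delta)=\ker d\delta(\tau(u^\flat))$ and $\Im d\delta(\tau(u^\flat))$ are $K\cap M$-submodules of $V_\delta$, equals $\Hom_{K\cap M}\!\big(V_\delta/\Im d\delta(\tau(u^\flat)),\xi\big)$, and this is then identified with $\Hom_{K\cap M}(\Sol_{(u)}(\delta),\xi)$. Summing over $\delta\in\Irr(K)$ yields \eqref{eqn:K-intro}.

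The step I expect to be the real obstacle is pinning $T_\delta$ down exactly: passing from the abstract operator $R(u)$ to the endomorphism $d\delta(\tau(u^\flat))$ of $V_\delta$ with \emph{the} conjugation $\tau$ --- rather than the principal anti-automorphism of $\Cal{U}(\fk)$ or their composite --- and with the correct $\C_\chi$-twist on the target, which forces one to be careful about the normalization of the pairing between $C^\infty(G/P,\Cal{W})_K$ and the generalized Verma module in the duality theorem \ref{thm:duality} and about which representative of $u^\flat$ in $\Cal{U}(\fk)$ is fixed. A related subtlety is the last identification $\Hom_{K\cap M}(\operatorname{coker}d\delta(\tau(u^\flat)),\xi)\cong\Hom_{K\cap M}(\Sol_{(u)}(\delta),\xi)$: one wants $\ker d\delta(\tau(u^\flat))\cong\operatorname{coker}d\delta(\tau(u^\flat))$ as $K\cap M$-modules, which should fall out of the semi-invariance recorded in Lemma \ref{lem:SolM} --- the twisting weight being trivial on $K\cap M$, or else after matching up $K\cap M$-types appropriately.
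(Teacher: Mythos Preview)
Your outline is close in spirit to the paper's, but the formula you write for $T_\delta$ is the wrong one, and this is exactly the ``obstacle'' you flagged.  Differentiating $f_\varphi(k)=\varphi(\delta(k)^{-1}v)$ from the right by $X\in\fk$ gives $-\varphi\big(d\delta(X)\,\delta(k)^{-1}v\big)$, with no $\tau$ because $\varphi$ is $\C$-linear; iterating, $R(u^\flat)f_\varphi=f_{\varphi\circ d\delta(S(u^\flat))}$, where $S$ is the antipode (principal anti-automorphism) of $\Cal{U}(\fk)$, not the conjugation $\tau$.  With that correction $\ker T_\delta=\Hom_{K\cap M}\!\big(V_\delta/\Im d\delta(S(u^\flat)),\,\xi\big)$, and the cokernel--kernel identification you then need is \emph{not} ``$\operatorname{coker}A\cong\ker A$'' for a single semi-invariant $A$ (this is false in general: the $\chi$-twist shifts the $K\cap M$-type of the cokernel relative to the kernel), but rather the Hermitian identity $(\Im A)^\perp=\ker A^{*}$.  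Since $d\delta(S(u^\flat))^{*}=d\delta(\tau(u^\flat))$ on the unitary $K$-module $V_\delta$, one obtains $V_\delta/\Im d\delta(S(u^\flat))\cong\ker d\delta(\tau(u^\flat))=\Sol_{(u)}(\delta)$ as $K\cap M$-modules, and the formula follows.  So $\tau$ enters not in $T_\delta$ itself but precisely in passing from the image of one operator to the kernel of its Hermitian adjoint.

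The paper bypasses this entirely by writing the multiplicity space as $(\bar V_\delta\otimes U_\xi)^{K\cap M}$, using the \emph{conjugate} of $V_\delta$ rather than $\Hom(V_\delta,-)$.  Because the second slot of the Hermitian form is conjugate-linear, \eqref{eqn:Rdiff} shows that $R(u^\flat)$ acts on the $\bar V_\delta$-factor as $\bar{v'}\mapsto\overline{d\delta(\tau(u^\flat))v'}$; thus $\tau$ appears directly, and Lemma~\ref{lem:Solxi} identifies the kernel in $\bar V_\delta\otimes U_\xi$ as $\overline{\Sol_{(u)}(\delta)}\otimes U_\xi$.  Taking $K\cap M$-invariants yields $\Hom_{K\cap M}(\Sol_{(u)}(\delta),\xi)$ with no coker/ker gymnastics.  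Your route can be made to work once $T_\delta$ is corrected, but the paper's choice of $\bar V_\delta$ is what makes the answer $\Sol_{(u)}(\delta)=\ker d\delta(\tau(u^\flat))$ fall out immediately.
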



As an application of Theorem \ref{thm:intro1},
we take $G$ to be 
$\wSL(3,\R)$, the non-linear double cover of $SL(3,\R)$, 
and $P$ to be a minimal parabolic $\wB$ of $\wSL(3,\R)$. 
Write $\wB =\wM AN$ for a Langlands decomposition of $\wB$.
Here $\wM$ is isomorphic to the quaternion group $Q_8$, 
a non-commutative group of order 8.
In particular, $\wM$ is a discrete subgroup of $\wSL(3,\R)$ 
so that $\wM = \wM/\wM_0$.
As $\wM \simeq Q_8$,
the set $\Irr(\wM)$ is given by
\begin{equation}\label{eqn:M-irred}
\Irr(\wM)= \{ \pp, \pmi, \mip, \mm, \mathbb{H}\},
\end{equation}
where $\pmpm$ are characters and $\mathbb{H}$ is the unique 2-dimensional 
irreducible representation of $Q_8\simeq \wM$.
(For the notation $\pmpm$, see Section \ref{sec:SL3}.)

In this setting we consider two cases, namely, 
the case for infinitesimal character $\rho$
and that for infinitesimal character $\wrho:=(1/2)\rho$.
For each case we take $\lambda$ in \eqref{eqn:K-intro} 
to be $\lambda = -\rho$ and $\lambda=-\wrho$, respectively.
Via the duality theorem 
we obtain first-order operators $\Cal{D}_X, \Cal{D}_Y$, 
third-order operators $\Cal{D}_{Y^2X}, \Cal{D}_{X^2Y}$, 
and fourth-order operator $\Cal{D}_{XY^2X} (=\Cal{D}_{YX^2Y})$ for the $\rho$ case, 
and a second-order operator $\Cal{D}_{\XcY}$ is obtained for the $\wrho$ case.
(For the notation $X, Y$, and $\XcY$, see \eqref{eqn:XY} and \eqref{eqn:XcY}.)
We remark that all operators are also constructed 
via the BKZ-construction (\cite{BKZ08, Kubo11}).
Moreover, the second-order operator $\Cal{D}_{\XcY}$ is 
a specialization of Kable's Heisenberg ultrahyperbolic operator 
(\cite{Kable12C, Kable12B}), which is used in \cite{Kable12C} 
to establish a Heisenberg analogue 
of classic Maxwell's theorem on harmonic polynomials on Euclidean space.

For the sake of simplicity, we write
$\D_u^\sigma = \D_u \otimes \mathrm{id}_\sigma$
for $\sigma \in \Irr(\wM)$.
It is easily observed that
the solution space $\Cal{S}ol_{(X;-\rho)}(\sigma)$ of 
$\Cal{D}^\sigma_X$ 
(resp.~ $\Cal{S}ol_{(Y;-\rho)}(\sigma)$ of 
$\Cal{D}^\sigma_{Y}$)
is contained in that of $\Cal{D}^\sigma_{Y^2X}$ 
and  $\Cal{D}^\sigma_{XY^2X}$
(resp.\ $\Cal{D}^\sigma_{X^2Y}$ and  $\Cal{D}^\sigma_{YX^2Y}$).
Then, in this paper, we focus on the solution spaces
of $\Cal{D}^\sigma_X$ and $\Cal{D}^\sigma_Y$ for the $\rho$ case.
Further the common solution space 
$\Cal{S}ol_{(X, Y;-\rho)}(\sigma)$ 
of $\Cal{D}^\sigma_X$ and $\Cal{D}^\sigma_Y$
is also investigated.

On the solution spaces of the first-order operators
we realize a number of irreducible representations studied in
\cite{HTY}.
In order to state the results
let $\wK$ be a maximal compact subgroup of $\wSL(3,\R)$.
As $\wK \simeq SU(2) \simeq Spin(3)$,
the irreducible representations $\delta \in \Irr(\wK)$ of $\wK$ can be parametrized
as $\Irr(\wK) \simeq \{V_{(\frac{a}{2})} : a \in \Z_{\geq 0} \}$,
where $V_{(\frac{a}{2})}$ is the irreducible representation of $\wK$
with $\dim_\C V_{(\frac{a}{2})} = a+1$.
Then, for $u = X, Y, (X,Y)$,
the classification of 
$\sigma \in \Irr(\wM)$ such that $\Cal{S}ol_{(u;-\rho)}(\sigma) \neq \{0\}$ 
and the $\wK$-type formula of the space 
$\Cal{S}ol _{(u;-\rho)}(\sigma)_{\wK}$
of $\wK$-finite solutions to $\D^\sigma_{u}$ are obtained as follows.

\begin{thm}
\label{thm:Ktype-rho}
For $\sigma \in \Irr(\wM)$, the following hold.
\begin{enumerate}
\item
$\Cal{S}ol_{(X;-\rho)}(\sigma) 
\hspace{0.3cm}
\neq \{0\}  \iff \sigma = \pp, \pmi$.
\item
$\Cal{S}ol_{(Y;-\rho)}(\sigma) 
 \hspace{0.33cm}
\neq \{0\} \iff \sigma = \pp, \mip$.
\item
$\Cal{S}ol_{(X,Y;-\rho)}(\sigma) \neq \{0\} \iff \sigma = \pp$.
\end{enumerate}
Moreover, for $\sigma \in \Irr(\wM)$ 
such that $\Cal{S}ol _{(u;-\rho)}(\sigma)\neq \{0\}$,
the $\wK$-type formula of 
$\Cal{S}ol _{(u;-\rho)}(\sigma)_{\wK}$ is determined as follows.
\begin{enumerate}
\item[(a)] $u=X:$\hspace{0.75cm}
$\displaystyle{\Cal{S}ol _{(X;-\rho)}(\pp)_{\wK} 
\hspace{0.27cm}
\simeq \bigoplus_{a=0}^\infty V_{(2a)}}$ 
\; and \;
$\displaystyle{\Cal{S}ol _{(X;-\rho)}(\pmi)_{\wK} 
\simeq \bigoplus_{a=0}^\infty V_{(2a+1)}}$.
\vskip 0.05in

\item[(b)] $u=Y:$\hspace{0.8cm}
$\displaystyle{\Cal{S}ol _{(Y;-\rho)}(\pp)_{\wK} 
\hspace{0.3cm}
\simeq \bigoplus_{a=0}^\infty V_{(2a)}}$
\; and \;
$\displaystyle{\Cal{S}ol _{(Y;-\rho)}(\mip)_{\wK} 
\simeq \bigoplus_{a=0}^\infty V_{(2a+1)}}$.
\vskip 0.05in

\item[(c)] $u=(X,Y):$
$\displaystyle{\Cal{S}ol _{(X, Y;-\rho)}(\pp)_{\wK} \simeq V_{(0)}}$.
\end{enumerate}
\end{thm}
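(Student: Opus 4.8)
The plan is to derive the whole statement from the Peter--Weyl type formula of Theorem~\ref{thm:intro1} (and its system-version from Section~\ref{subsec:sol}), applied with $G=\wSL(3,\R)$, $P=\wB=\wM AN$, $\lambda=-\rho$, and $\xi=\sigma\in\Irr(\wM)$. Since $\wM\subset\wK$, we have $\wK\cap\wM=\wM\simeq Q_8$, so everything reduces to identifying, for each $\delta\in\Irr(\wK)$ and each $u\in\{X,Y\}$, the subspace $\Sol_{(u)}(\delta)\subseteq V_\delta$ and the intersection $\Sol_{(X)}(\delta)\cap\Sol_{(Y)}(\delta)$ \emph{as} $\wM$-modules (Lemma~\ref{lem:SolM}). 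Because $\lambda=-\rho$ the $\fp$-module $\C_{-\lambda-\rho}$ is trivial, so for a first-order vector $u\in\fg$ the element $u^\flat\in\Cal{U}(\fk)$ is just the $\fk$-component of $u$ in the Iwasawa decomposition $\fg=\fk\oplus\fa\oplus\fn$. Using the explicit $X$ and $Y$ from \eqref{eqn:XY} one gets $X^\flat=\kappa_X$ and $Y^\flat=\kappa_Y$ (up to a nonzero scalar), where $\kappa_X,\kappa_Y$ are the infinitesimal rotations in the two coordinate planes singled out by the two simple roots; both lie in $\fk_0\simeq\mathfrak{so}(3)$, so $\tau$ fixes them and $\Sol_{(X)}(\delta)=\ker d\delta(\kappa_X)$, $\Sol_{(Y)}(\delta)=\ker d\delta(\kappa_Y)$.

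The element $\kappa_X$ generates a maximal torus of $\mathfrak{so}(3)\cong\mathfrak{su}(2)$, so its eigenvalues on $V_{(a/2)}$ are (an imaginary multiple of) $a,a-2,\dots,-a$; hence $\Sol_{(X)}(V_{(a/2)})$ is one-dimensional when $a$ is even and zero when $a$ is odd, and similarly for $Y$. In particular only the integer-spin $\wK$-types can contribute, and the $2$-dimensional $\wM$-representation $\mathbb{H}$ never occurs. The technical heart of the argument is to pin down the $\wM$-character carried by the line $\Sol_{(X)}(V_{(n)})$ when $a=2n$: realizing $V_{(n)}=\mathrm{Sym}^{2n}(\C^{2})$ one has $\Sol_{(X)}(V_{(n)})=\C\cdot(xy)^{n}$, and evaluating the generators of $\wM\simeq Q_8$ (the lifts of the order-two rotations about the coordinate axes) on $(xy)^{n}$ gives the character taking the value $1$ on the generator commuting with $\kappa_X$ and $(-1)^{n}$ on the other two; matching this against the labelling of Section~\ref{sec:SL3} identifies it with $\pp$ for $n$ even and $\pmi$ for $n$ odd. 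The computation for $Y$ is identical after a permutation of axes and produces $\pp$ for $n$ even and $\mip$ for $n$ odd.

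Feeding these into $\Cal{S}ol_{(u;-\rho)}(\sigma)_{\wK}\simeq\bigoplus_{\delta}V_\delta\otimes\Hom_{\wM}(\Sol_{(u)}(\delta),\sigma)$ gives (a) and (b): for $u=X$ the Hom-space is one-dimensional exactly when $a=2n$ and $\sigma$ is the character attached to $n$ above, and vanishes otherwise, so after the substitutions $a=4a'$ (for $\sigma=\pp$) and $a=4a'+2$ (for $\sigma=\pmi$) we obtain $\bigoplus_{a'\ge0}V_{(2a')}$, resp.\ $\bigoplus_{a'\ge0}V_{(2a'+1)}$; the case $u=Y$ is the same with $\mip$ in place of $\pmi$. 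For the system one uses $\Sol_{(X,Y)}(\delta)=\ker d\delta(\kappa_X)\cap\ker d\delta(\kappa_Y)$; since $\kappa_X$ and $\kappa_Y$ Lie-generate all of $\mathfrak{so}(3)$ (their bracket is the third rotation generator), this intersection equals $V_\delta^{\wK}$, which is the trivial $\wM$-module $\C$ when $\delta=V_{(0)}$ and $\{0\}$ otherwise, giving (c). The classification statements (1)--(3) are then just the non-vanishing of these formulas, using that $\Cal{S}ol_{(u;-\rho)}(\sigma)$ is a closed subrepresentation of the admissible representation $I_{\wB}(\sigma,-\rho)$ and so is nonzero if and only if its $\wK$-finite part is. The only real obstacle is the bookkeeping forced by the non-linear double cover: keeping integer- and half-integer-spin $\wK$-types apart, and determining precisely which $Q_8$-character sits on each line $\Sol_{(u)}(\delta)$; once that is settled the remainder is a direct substitution into the Peter--Weyl type formula.
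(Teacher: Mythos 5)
Your plan follows the paper's high-level strategy (the Peter--Weyl type formula plus a case-by-case identification of $\Sol_{(u)}(\delta)$ and its $\wM$-type), and your conclusions are correct. The intermediate computations, though, are organized differently from the paper's. The paper realizes $V_\delta$ as $\Pol_n[t]$ and turns the condition $d\pi_n(u^\flat)p(t)=0$ into an explicit first-order ODE, giving $\Sol_{(X)}(n)=\C(1-t^2)^{n/2}$ and $\Sol_{(Y)}(n)=\C(1+t^2)^{n/2}$ (Lemma \ref{lem:dpin-rho}, Proposition \ref{prop:SolXandY}); for $(c)$ it then observes directly that $(1-t^2)^{n/2}$ and $(1+t^2)^{n/2}$ are linearly independent unless $n=0$ (Corollary \ref{cor:SolXY}). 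You instead exploit that $u^\flat$ is a torus generator: the solution space is the zero weight space for that torus, hence one-dimensional precisely when the spin is integral; for the system you use that $X^\flat$ and $Y^\flat$ Lie-generate $\fk$, so the common kernel is $V_\delta^{\fk}=V_\delta^{\wK}$. These are real simplifications over the paper's explicit polynomial manipulations, especially in part $(c)$, and the step justifying why the commuting $\wM$-generator acts by $1$ (it lies in the one-parameter subgroup $\exp(\R\, u^\flat)$, which fixes the zero weight vector) is genuinely more conceptual than the paper's direct check on $(1\pm t^2)^{n/2}$.

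One intermediate statement needs fixing, though the end result is unaffected. Under the paper's identification $\Omega_\C\colon\fk\xrightarrow{\sim}\f{sl}(2,\C)$ with $\wM\subset SU(2)$ as in \eqref{eqn:wM}, one has $X^\flat=B_3\mapsto\tfrac12 A_3$, whose kernel in $\mathrm{Sym}^{2n}(\C^2)$ is $\C(x^2-y^2)^n$, not $\C(xy)^n$; the vector $(xy)^n$ is the zero weight vector of the diagonal torus $\tfrac12 A_1$, which is \emph{not} $X^\flat$ under the paper's $\Omega_\C$. If you literally evaluate $\wm_1,\wm_2,\wm_3$ on $(xy)^n$, you get the character $(1,(-1)^n,(-1)^n)$, i.e.\ $\mm$ for $n$ odd, rather than the correct $\pmi$. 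Your claimed character formula (the generator in $\exp(\R X^\flat)$ acts by $1$, the other two by $(-1)^n$) is right, and it forces $\pmi$; but the worked-out verification on $(xy)^n$ would not. Replacing $(xy)^n$ by $(x^2-y^2)^n$ (or, equivalently, fixing a single compatible set of identifications throughout) makes the intermediate computation match both your stated character rule and the paper's Propositions \ref{prop:SolXandY}--\ref{prop:Mrep-rho}.
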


We prove Theorem \ref{thm:Ktype-rho} at the end of
Section \ref{sec:rho} (see Theorem \ref{thm:XandY}). 
We remark that although the $\wK$-type formulas for 
$\Cal{S}ol _{(X;-\rho)}(\pp)_{\wK}$ and 
$\Cal{S}ol _{(Y;-\rho)}(\pp)_{\wK}$,
and those for
$\Cal{S}ol _{(X;-\rho)}(\pmi)_{\wK}$ and
$\Cal{S}ol _{(Y;-\rho)}(\mip)_{\wK}$
are the same,
these spaces are different as $(\fg, \wK)$-modules.
Moreover, for $u=X,Y$, we set
\begin{equation*}
\Cal{S}ol _{(u/(X,Y);-\rho)}(\pp)_{\wK}
:=\Cal{S}ol _{(u;-\rho)}(\pp)_{\wK}/ \Cal{S}ol _{(X, Y;-\rho)}(\pp)_{\wK}.
\end{equation*}
Then the four representations
$\Cal{S}ol _{(X/(X,Y);-\rho)}(\pp)_{\wK}$,
$\Cal{S}ol _{(Y/(X,Y);-\rho)}(\pp)_{\wK}$,
$\Cal{S}ol _{(X;-\rho)}(\pmi)_{\wK}$, and
$\Cal{S}ol _{(Y;-\rho)}(\mip)_{\wK}$
are all irreducible $(\fg, \wK)$-modules.
(For the remarks see, for instance, \cite{HTY}.)

For the $\wrho$ case, 
we successfully realize all small representations of $\wSL(3,\R)$ 
attached to the minimal nilpotent orbit in the solution space 
$\Cal{S}ol_{(\XcY;-\wrho)}(\sigma)$ of
$\Cal{D}^\sigma_{\XcY}$,
one of which is so-called Torasso's representation.
Here is the main result for the second-order operator $\Cal{D}_{\XcY}$.

\begin{thm}
\label{thm:XcY}
For $\sigma \in \Irr(\wM)$, we have
\begin{equation*}
\Cal{S}ol_{(\XcY;-\wrho)}(\sigma) \neq \{0\} 
\iff \sigma = \pp,\, \mathbb{H},\, \mm.
\end{equation*}
Moreover, for $\sigma = \pp,\, \mathbb{H},\, \mm$, the $\wK$-type formula 
of $\Cal{S}ol _{(\XcY;-\wrho)}(\sigma)_{\wK}$ is obtained as follows.
\begin{enumerate}
\item[(a)] $\sigma = \pp:$ 
$\displaystyle{\Cal{S}ol _{(\XcY;-\wrho)}(\pp)_{\wK} \simeq \bigoplus_{a=0}^\infty V_{(2a)}}$.

\item[(b)] $\sigma= \mathbb{H}:$\hspace{0.6cm}
$\displaystyle{\Cal{S}ol _{(\XcY;-\wrho)}(\mathbb{H})_{\wK} \hspace{0.7cm}
\simeq \bigoplus_{a=0}^\infty V_{(2a+\frac{1}{2})}}$.
\item[(c)] $\sigma= \mm:$ 
$\displaystyle{\Cal{S}ol _{(\XcY;-\wrho)}(\mm)_{\wK} 
\simeq \bigoplus_{a=0}^\infty V_{(2a+1)}}$.
\end{enumerate}
\end{thm}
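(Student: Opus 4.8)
The plan is to derive Theorem~\ref{thm:XcY} from the Peter--Weyl type formula of Theorem~\ref{thm:intro1}, specialized to $G=\wSL(3,\R)$, $P=\wB$, $u=\XcY$ and $\lambda=-\wrho$ (so that $-\lambda-\rho=-\wrho$). Since $\wM\simeq Q_8$ is finite and $\wM\subseteq\wK$, one has $\wK\cap\wM=\wM$, and for each $\sigma\in\Irr(\wM)$ Theorem~\ref{thm:intro1} becomes
\begin{equation*}
\Cal{S}ol_{(\XcY;-\wrho)}(\sigma)_{\wK}
\;\simeq\;\bigoplus_{n\geq 0}V_{(n/2)}\otimes
\Hom_{\wM}\!\big(\Sol_{(\XcY)}(V_{(n/2)}),\,\sigma\big).
\end{equation*}
Hence the entire statement is reduced to determining, for every $n\in\Z_{\geq 0}$, the subspace $\Sol_{(\XcY)}(V_{(n/2)})\subseteq V_{(n/2)}$ as an $\wM$-module: the equivalence ``$\Cal{S}ol_{(\XcY;-\wrho)}(\sigma)\neq\{0\}$'' holds precisely when $\sigma$ occurs in $\Sol_{(\XcY)}(V_{(n/2)})$ for some $n$, and the $\wK$-type formulas are read off from the resulting multiplicity spaces, exactly as in the first-order cases of Theorem~\ref{thm:Ktype-rho}.

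First I would compute $(\XcY)^\flat$. Because the minimal parabolic of $\wSL(3,\R)$ has $\fm=0$, the Iwasawa factorization $\fg=\fk\oplus\fa\oplus\fn$ gives $\Cal{U}(\fg)=\Cal{U}(\fk)\,\Cal{U}(\fa)\,\Cal{U}(\fn)$, and $(\XcY)^\flat$ is obtained by writing $\XcY$ in this ordered form with the $\fk$-factor on the left and evaluating the $\fa$-part on $-\wrho$ and the $\fn$-part on $0$. As $\XcY$ has order two, $(\XcY)^\flat$, and therefore $Z:=\tau\big((\XcY)^\flat\big)$, is an element of $\Cal{U}(\fk)\simeq\Cal{U}(\mathfrak{sl}(2,\C))$ of degree at most two, where $\tau$ is the conjugation of $\fk$ with respect to $\fk_0\simeq\mathfrak{su}(2)$. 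Realizing $V_{(n/2)}$ as $\mathrm{Sym}^n(\C^2)$ with its weight basis, $d\delta_{n/2}(Z)$ becomes an explicit operator carrying each weight vector into the span of the weight vectors at distance $0,\pm2,\pm4$, so that $\Sol_{(\XcY)}(V_{(n/2)})=\ker d\delta_{n/2}(Z)$ is computed by solving a finite-term recursion on the coefficients; I expect the honest solutions to be expressible through hypergeometric polynomials, namely those formal solutions that yield genuine degree-$n$ polynomials of the correct parity.

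It then remains to extract the $\wM$-structure. The restriction to $\wM\simeq Q_8$ of $V_{(n/2)}\in\Irr(\wK)$ is classical: for $n$ odd it is $\tfrac{n+1}{2}$ copies of $\mathbb{H}$, while for $n$ even it is a sum of the four characters $\pp,\pmi,\mip,\mm$ with multiplicities governed by the appropriate Molien series (for $\pp$ this is $(1+t^{6})/(1-t^{4})^{2}$). Intersecting $\ker d\delta_{n/2}(Z)$ with these isotypic components --- the $X\leftrightarrow Y$ symmetry of $\XcY$, which acts on $\Irr(\wM)$ by interchanging $\pmi\leftrightarrow\mip$ and fixing $\pp,\mm,\mathbb{H}$, already forces the $\pmi$- and $\mip$-parts to behave identically in $n$ --- and carrying out the recursion, I would verify that $\Sol_{(\XcY)}(V_{(n/2)})$ contains a single copy of $\pp$ exactly for $n\equiv0$, a single copy of $\mm$ exactly for $n\equiv2$, a single copy of $\mathbb{H}$ exactly for $n\equiv1$, and is zero for $n\equiv3\pmod4$. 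Substituting into the displayed decomposition yields the equivalence together with (a), (b) and (c).

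The main obstacle is the step carried out uniformly in $n$: since $Z$ is genuinely of second order (in contrast with Theorem~\ref{thm:Ktype-rho}), $\ker d\delta_{n/2}(Z)$ is cut out by a recursion rather than a single linear condition, and one must both identify which formal solutions are honest polynomials of degree $n$ and --- the delicate point --- show that the surviving kernel spans a single $Q_8$-isotypic line (multiplicity one) rather than merely having the expected dimension; pinning down the normalization of the $Q_8$-action on the weight basis and treating the low-$n$ boundary cases is where the care is required. Finally, although not needed for Theorem~\ref{thm:XcY} itself, comparing the $\wK$-type formulas (a)--(c) with the lists in \cite{HTY}, and with Torasso's construction, identifies the $(\fg,\wK)$-modules so obtained with the small representations of $\wSL(3,\R)$ attached to the minimal nilpotent orbit.
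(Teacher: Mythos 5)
Your plan follows the same architecture as the paper's proof: reduce via the Peter--Weyl type formula to determining $\Sol_{(\XcY)}(\delta)$ on each $\wK$-type $\delta$, recognize that solving the second-order equation will produce hypergeometric polynomials, and finally read off the $\wM$-module structure to extract the multiplicity spaces. Where you diverge is in the execution of the two computational steps. For the kernel on each $\wK$-type, you work with the weight basis of $\mathrm{Sym}^n(\C^2)$ and a finite recursion, whereas the paper realizes $V_{(n/2)}$ as $\Pol_n[t]$ so that $d\pi_n((\XcY)^\flat) = \tfrac{\sqrt{-1}}{2}\bigl((1-t^4)\tfrac{d^2}{dt^2}+2(n-1)t^3\tfrac{d}{dt}-n(n-1)t^2\bigr)$ reduces under $x=t^4$ directly to Euler's hypergeometric equation $D[-\tfrac{n}{4},-\tfrac{n-1}{4},\tfrac{3}{4};x]$; the ODE picture makes both the identification of the two fundamental solutions $u_n(t),v_n(t)$ and the polynomiality test ($n\equiv 0,1,2\pmod 4$) essentially immediate. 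For the $\wM$-structure, you propose to precompute the full branching $V_{(n/2)}|_{Q_8}$ (your Molien series $(1+t^6)/(1-t^4)^2$ for the $\pp$-isotypic part is correct) and then intersect with the kernel; the paper instead evaluates the transformation laws of $\wm_1,\wm_2$ directly on the explicit polynomials, using the $\wM$-invariance of the one-dimensional (resp.\ two-dimensional) solution space together with Gauss's summation $u_{4k}(1)={}_2F_1[-k,-k+\tfrac14,\tfrac34;1]\neq 0$ to pin down the character, which is shorter and avoids any multiplicity bookkeeping in the full branching. Your route is correct but would require somewhat more bookkeeping than the paper's; the $X\leftrightarrow Y$ symmetry you invoke, while a valid observation, is not actually needed here since $\XcY$ already treats $\pmi$ and $\mip$ identically. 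One small point to keep straight: the equivalence in the statement concerns smooth solutions $\Cal{S}ol_{(\XcY;-\wrho)}(\sigma)$, so you must also invoke density of $\wK$-finite vectors to pass from the $\wK$-finite nonvanishing you establish to the smooth nonvanishing, which the paper does in the proof of Theorem~\ref{thm:Ktype-rho} and imports here.
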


We would like to note that the set of $\wK$-types
$\bigoplus_{a=0}^\infty V_{(2a+\frac{3}{2})}$ is missing mysteriously.
A similar observation was also made in \cite{RS82} and 
\cite[Ex.~12.4]{Vogan91}.

The proof of Theorem \ref{thm:XcY} is given in
Section \ref{sec:wrho} (see Theorem \ref{thm:XcY1}).
It is remarked that 
one can also read off the results of (a) and (c) from \cite[Thm.~5.13]{Kable12C},
as these two cases are factored through $SL(3,\R)$. 
The spherical representation in (a) is also recently realized in 
\cite{HPP17} as the range of a residue operator of $SL(3,\R)$.

The representation obtained in the case of $\sigma = \mathbb{H}$ is Torasso's representation.
As Torasso's representation is the unique genuine irreducible representation of 
$\wSL(3,\R)$ attached to the minimal nilpotent orbit,
it has been widely studied from various different points of view.
See, for instance, \cite{Lucas08, Orsted00, RS82, Sijacki75, Tamori17, Torasso83, Tsai14},
as related works.
We provide another realization of the genuine representation,
which seems rather more elementary than any other realization in the literature.


In turn to the general theory observe that
in order to determine the $K$-type formula of solution spaces
via the isomorphism \eqref{eqn:K-intro}, 
one needs to solve the equation $d\delta(\tau( u^\flat) )v = 0$ 
in \eqref{eqn:SolK-intro} on each $K$-type $V_\delta$.
For the case of $\wSL(3,\R)$, as the maximal compact subgroup $\wK$ is isomorphic to 
$SU(2)$, such equations can be identified as
some recurrence relations that arise from the standard $\f{sl}(2)$-computation.
In this paper, instead of solving the recurrence relations, 
we realize each $\wK$-type as the space $\Pol_n[t]$ of polynomials of one variable
with degree $\leq n$,
in such a way that one can solve the equations in concern by solving 
ordinary differential equations such as Euler's hypergeometric equation.
In this realization it is revealed that there is 
a correspondence between the representations realized on the solution spaces
of $\D^\sigma_X$, $\D^\sigma_Y$, and $\D^\sigma_{\XcY}$,
and polynomial solutions to ordinary differential equations. 
For instance, as shown in Theorem \ref{thm:XcY},
three irreducible representations
are realized on the solution space of $\Cal{D}_{\XcY}^\sigma$ with 
$\sigma = \pp, \mathbb{H}, \mm$.
We denote these representations by
$\Pi_{(0)}$, $\Pi_{(\frac{1}{2})}$, and $\Pi_{(1)}$,
where $\Pi_{(\frac{a}{2})}$ denotes 
the irreducible representation with 
lowest $\wK$-type $V_{(\frac{a}{2})}$.
(For instance, the representation $\Pi_{(\frac{1}{2})}$ is Torasso's representation.)
Then Theorem \ref{thm:XcY}, Equation \eqref{eqn:Soln},
and Propositions \ref{prop:SolXcY} and \ref{prop:Mrep-wrho}
imply that
the representations $\Pi_{(0)}, \Pi_{(\frac{1}{2})}$, and $\Pi_{(1)}$
correspond to the following subspaces of $\Pol_n[t]$
with appropriate non-negative integer $n \in \Z_{\geq 0}$:
\begin{alignat*}{3}
&\Pi_{(0)}                   &&\longleftrightarrow&&\;\; \C u_n(t);\\
&\Pi_{(\frac{1}{2})} &&\longleftrightarrow &&\;\; \C u_n(t) \oplus \C v_n(t);\\ 
&\Pi_{(1)}               &&\longleftrightarrow &&\;\; \C v_n(t),
\end{alignat*} 
where $u_n(t)$ and $v_n(t)$ are given by
\begin{equation*}
u_n(t)= {}_2F_1[-\frac{n}{4}, -\frac{n-1}{4}, \frac{3}{4};t^4]
\quad \text{and} \quad
v_n(t)= t{}_2F_1[-\frac{n-1}{4}, -\frac{n-2}{4}, \frac{5}{4};t^4].
\end{equation*}
\noindent
The functions $u_n(t)$ and $v_n(t)$ form a fundamental set of solutions to 
Euler's hypergeometric equation 
$D{[-\frac{n}{4}, -\frac{n-1}{4}, \frac{3}{4};t^4]}f(t)=0$
with
$D{[a,b,c; x]}=x(1-x)\frac{d^2}{dx^2} + (c-(a+b+1)x)\frac{d}{dx}- ab$.
We also have a similar correspondence for the $\rho$ case 
(see 
Theorem \ref{thm:Ktype-rho}, 
Propositions \ref{prop:SolXandY} and \ref{prop:Mrep-rho},
and 
Corollary \ref{cor:SolXY}).

It is known that the representations 
$\Pi_{(0)}, \Pi_{(\frac{1}{2})}$, and $\Pi_{(1)}$ are all unitary.
We hope that we can also  report on an explicit construction of the unitary structures in future.
It is remarked that 
there exist two irreducible $(\fg, \wK)$-modules with lowest $\wK$-type $V_{(\frac{3}{2})}$,
but these are not unitary (see, for instance, \cite[Ex.~12.4]{Vogan91}).


We now outline the rest of this paper. 
This paper consists of seven sections with this introduction.
First we discuss intertwining differential operators and 
the Peter--Weyl theorem in Section \ref{sec:IDO}.
In this section we start by recalling the duality theorem between 
intertwining differential operators and homomorphisms between 
generalized Verma modules. 
We then study the space of $K$-finite solutions to an intertwining differential operator
via the Peter--Weyl theorem. The main results of this section 
are Theorems \ref{prop:SVW} and \ref{thm:Ksol}, which give Peter--Weyl type formulas
of the space of $K$-finite solutions. The common solution space of 
a system of intertwining differential operators is also discussed in this section.
In the end we illustrate as a recipe our technique to  
determine the $K$-type formulafor the case of a line bundle to a line bundle 
(the LL case).


In Section \ref{sec:Verma},
to prepare for the later application to $\wSL(3,\R)$,
we specialize $G$ to be split real and take parabolic $P$ to be a minimal parabolic $B$.
The purpose of this section is to discuss the classification and construction of
homomorphisms between Verma modules as a $(\fg,B)$-module.
We give a summary of our technique as a recipe at the end of this section.


Section \ref{sec:SL3} is a preliminary section for Sections \ref{sec:rho} and \ref{sec:wrho}.
In this section $G$ and $P$ are taken to be $G=\wSL(3,\R)$ and 
$P=\wB$, a minimal parabolic of $\wSL(3,\R)$, and
we settle necessary notation and normalizations for the later sections.
In particular we identify the elements of $\wM$ for $\wB = \wM AN$ 
with these of the corresponding linear group $M \subset SL(3,\R)$ in a canonical way.
We also recall the realization of 
irreducible representations of $\wK\simeq SU(2)$ 
as the space of polynomials of one variable. 


In Sections \ref{sec:rho} and \ref{sec:wrho},
by using the results from the previous sections,
we study the $\wK$-type formulas of the spaces of $\wK$-finite solutions to several
intertwining differential operators.
In Section \ref{sec:rho}, we consider the principal series representation 
with infinitesimal character $\rho$ and study the solution spaces of 
first-order differential operators $\Cal{D}^\sigma_X$ and $\Cal{D}^\sigma_Y$.
The $\wK$-type formulas are achieved in Theorem \ref{thm:XandY},
which shows Theorem \ref{thm:Ktype-rho}.
In Section \ref{sec:wrho}, we take the infinitesimal character to be $\wrho$ 
and consider second-order operator $\Cal{D}^\sigma_{\XcY}$. 
We accomplish the $\wK$-type formulas in 
Theorem \ref{thm:XcY1}, which concludes Theorem \ref{thm:XcY}.
In this section 
there is one proposition whose proof involves some classical 
facts on the Gauss hypergeometric series ${}_2F_1[a,b,c;x]$. 
We give the proof in Section \ref{sec:appendix} 
after recalling these facts.


\section{Intertwining differential operators and the Peter--Weyl theorem}
\label{sec:IDO}

The aim of this section is to discuss
intertwining differential operators
between degenerate principal series representations.
More precisely, we first review a well-known duality theorem between
intertwining differential operators and homomorphisms between generalized Verma modules.
The space of $K$-finite solutions to such differential operators
are then studied via the Peter--Weyl theorem.

\subsection{Duality theorem 
between degenerate principal series representations and generalized Verma modules}
\label{sec:duality}

We start by reviewing a well-known duality theorem between the space of 
intertwining differential operators between
degenerate principal series representations and that of
homomorphisms between generalized Verma modules.

Let $G$ be a real reductive Lie group 
with Lie algebra $\mathfrak{g}_0$. 
Choose a Cartan involution $\theta$ on $\fg_0$ and write
$\mathfrak{g}_0=\fk_0 \oplus \mathfrak{s}_0$ for the corresponding
Cartan decomposition with $\mathfrak{k}_0$ the $+1$ eigenspace
and $\mathfrak{s}_0$ the $-1$ eigenspace of $\theta$.
Let $\mathfrak{a}_0^{\min} \subset \fs_0$
be a maximal abelian subspace of $\fs_0$. 
Put $\fh_0:=\ft^{\min}_0 \oplus \fa^{\min}_{0}$,
where $\ft^{\min}_0$ is a maximal abelian subspace of 
$\fm^{\min}_0:=Z_{\fk_0}(\fa^{\min}_0)$.

For real Lie algebra $\mathfrak{y}_0$, we express its complexification by 
$\mathfrak{y}$ (simply omitting the subscript $0$)
and the universal enveloping algebra of $\fy$ by $\mathcal{U}(\fy)$.
For instance, $\fg$, $\fh$, and $\fa^{\min}$ denote the complexifications of $\fg_0$, $\fh_0$, and $\fa^{\min}_0$, respectively. 
We write $\gD \equiv \gD(\fg,\fh)$ for the set of roots of $\fg$ with respect to
the Cartan subalgebra $\fh$ and 
$\gS \equiv\gS(\fg_0, \fa_0^{\min})$ for that of restricted roots of $\fg_0$ 
with respect to $\fa_0^{\min}$.
Choose positive systems $\gD^+$ and $\gS^+$of $\gD$ and $\gS$, respectively,
such that $\gD^+$ and $\gS^+$ are compatible.
We write $\rho$ for half the sum of the positive roots $\alpha \in \gD^+$.

Let $\fp^{\min}_0$ be the minimal parabolic subalgebra of $\fg_0$ with 
Langlands decomposition 
$\fp^{\min}_0 = \fm^{\min}_0 \oplus \fa^{\min}_0 \oplus \fn^{\min}_0$,
where the nilpotent radical $\fn^{\min}_0$ corresponds to $\gS^+$.
Fix a standard parabolic subalgebra $\fp_0 \supset \fp_0^{\min}$ with
Langlands decomposition $\fp_0 = \fm_0 \oplus \fa_0 \oplus \fn_0$.
Let $P$ be a parabolic subgroup of $G$ with Lie algebra $\frak{p}_0$.
We write $P=MAN$ for the Langlands decomposition of $P$
corresponding to $\fp_0 = \fm_0 \oplus \fa_0 \oplus \fn_0$.

For $\mu \in \fa^* \simeq \Hom_\R(\fa_0,\C)$,
we define a one-dimensional representation $\C_{\mu}$ of $A$ by 
$a \mapsto e^{\mu}(a):= e^{\mu(\log a)}$ for $a\in A$.
Then, given a finite-dimensional  
representation 
$W_\sigma = (\sigma, W)$ of $M$ and weight $\lambda \in \fa^*$,
we define an $MA$-representation $W_{\sigma,\lambda}$ by
\begin{equation*}
W_{\sigma,\lambda}:=W_\sigma \otimes \C_{\lambda+\rho}.
\end{equation*}
As usual, by letting $N$ act on $W_{\sigma,\lambda}$ trivially, we regard
$W_{\sigma,\lambda}$ as a representation of $P$.
We then write $\mathcal{W}_{\sigma,\lambda} = G\times_{P}W_{\sigma,\lambda}$
for the $G$-equivariant homogeneous vector bundle over $G/P$ with
fiber $W_{\sigma,\lambda}$.
Then we form a degenerate principal series representation
\begin{equation*}
I_P(\sigma, \lambda) 
:= \text{Ind}_P^G(\sigma\otimes(\lambda+\rho)\otimes \mathbb{1})
\end{equation*}
of $G$ on the Fr\' echet space $C^\infty\left(G/P, \mathcal{W}_{\sigma,\lambda}\right)$ 
of smooth sections.

For the $P$-representation $W_{\sigma,\lambda}$, we set 
\begin{equation*}
W^\ssv_{\sigma, \lambda}:=W^\ssv_\sigma \otimes \C_{-(\lambda+\rho)},
\end{equation*}
where $W^\ssv_{\sigma}= (\sigma^\ssv, W^\ssv)$ 
is the contragredient representation of $W_{\sigma}$.
As $W_{\sigma,\lambda}^\ssv$ is a representation of $P$, it can be thought of as
a $\mathcal{U}(\fp)$-module.
We then define a 
$(\fg, P)$-module 
$M_\fp\left(\sigma^\ssv, -\lambda\right)$ 
(\emph{generalized Verma module})
induced from $W_{\sigma,\lambda}^\ssv$ by
\begin{equation*}
M_\fp\left(\sigma^\ssv, -\lambda\right):=\mathcal{U}(\fg)\otimes_{\mathcal{U}(\fp)}
W_{\sigma,\lambda}^\ssv.
\end{equation*}
The parabolic subgroup $P$ acts on 
$M_\fp\left(\sigma^\ssv, -\lambda\right)$ diagonally via the adjoint action $\Ad$ 
on $\mathcal{U}(\fg)$ and the representation 
$\sigma^\ssv \otimes e^{-(\lambda+\rho)} \otimes \mathbb{1}$ 
on $W^\ssv_{\sigma,\lambda}$.

Given a finite-dimensional  
representation $E_\eta \equiv (\eta, E)$ 
of $M$ and a weight $\nu \in \fa^*$,
we similarly define $P$-representations $E_{\eta,\nu}$ and
$E^\ssv_{\eta,\nu}$.
Let $\mathcal{E}_{\eta,\nu} \to G/P$ be the $G$-equivariant 
homogeneous vector bundle over $G/P$ with fiber $E_{\eta,\nu}$.
We realize the degenerate principal series representation $I_P(\eta, \nu)$
of $G$ on $C^\infty\left(G/P, \mathcal{E}_{\eta, \nu}\right)$.
We denote by 
$\mathrm{Hom}_G(I_P(\sigma, \lambda), I_P(\eta,\nu))$ 
the space of intertwining operators from
$I_P(\sigma, \lambda)$ to $I_P(\eta, \nu)$.
Then we set
\begin{equation*}
\mathrm{Diff}_{G}
(I_P(\sigma, \lambda), I_P(\eta,\nu)):=
\mathrm{Diff}
(I_P(\sigma, \lambda), I_P(\eta,\nu)) 
\cap
\mathrm{Hom}_G
(I_P(\sigma, \lambda), I_P(\eta,\nu)),
\end{equation*}
where 
$\mathrm{Diff}
(I_P(\sigma, \lambda), I_P(\eta,\nu)) $
denotes the space of differential operators from 
$I_P(\sigma, \lambda)$ to $I_P(\eta, \nu)$.

Let $R(X)$ denote the infinitesimal right translation of $X \in \fg_0$.
We extend it complex linearly to $\fg$ and naturally to $\mathcal{U}(\fg)$.
For finite-dimensional vector space $V$, we define
\begin{equation*}
\IP{\cdot}{\cdot}_V \colon V \times V^\ssv \to \C
\end{equation*}
as the natural pairing of $V$ and $V^\ssv$.
The following duality theorem plays a key role for our construction of 
intertwining differential operators.
For the proof see, for instance,
\cite[Lem.~2.4]{CS90},
\cite[Thm.~2.9]{KP1}, or
\cite[Prop.~1.2]{KR00}.

\begin{thm}\label{thm:duality}
\emph{(duality theorem)}
There exists a natural linear isomorphism
\begin{equation}\label{eqn:dualityP}
\mathcal{D}_{H\to D}\colon
\Hom_{P}\left(E_{\eta,\nu}^\ssv, 
M_\fp\left(\sigma^\ssv, -\lambda\right)  \right)
\stackrel{\sim}{\To}
\mathrm{Diff}_{G}
(I_P(\sigma, \lambda), I_P(\eta,\nu)).
\end{equation}
Equivalently, we have
\begin{equation}\label{eqn:duality}
\Hom_{\mathfrak{g}, P}\left(
M_\fp\left(\eta^\ssv, -\nu\right),
M_\fp\left(\sigma^\ssv, -\lambda\right)\right)
\stackrel{\sim}{\To}
\mathrm{Diff}_{G}
(I_P(\sigma, \lambda), I_P(\eta,\nu)).
\end{equation}
Moreover, for $\varphi \in 
\Hom_{P}\left(E_{\eta,\nu}^\ssv, 
M_\fp\left(\sigma^\ssv, -\lambda\right)  \right)$
with $\varphi(x^\ssv\otimes \mathbb{1}_{-(\nu+\rho)})
= \sum_{i}u_{x,i} \otimes (w_{x,i}^\ssv\otimes \mathbb{1}_{-(\lambda+\rho)})$ 
for 
$u_{x,i} \in \mathcal{U}(\fg)$ and 
$w_{x,i}^\ssv\otimes\mathbb{1}_{-(\lambda+\rho)} \in W^\ssv_{\sigma,\lambda}$,
the map $\mathcal{D}_{H\to D}$ is given by
\begin{equation}\label{eqn:duality2}
\langle \mathcal{D}_{H\to D}(\varphi)F, x^\ssv\rangle_{\tiny E} 
= \sum_{i} \langle R(u_{x,i})F, w_{x,i}^\ssv \rangle_{\tiny W}
\quad \text{for $F \in I_P(\sigma,\lambda)$}.
\end{equation}
\end{thm}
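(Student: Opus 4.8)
The plan is to deduce the equivalent formulation \eqref{eqn:duality} formally from \eqref{eqn:dualityP}, and then to construct the map $\mathcal{D}_{H\to D}$ of \eqref{eqn:dualityP} by hand --- taking \eqref{eqn:duality2} as its definition --- and to invert it. For the reduction: by construction $M_\fp(\eta^\ssv,-\nu)=\mathcal{U}(\fg)\otimes_{\mathcal{U}(\fp)}E_{\eta,\nu}^\ssv$ is freely generated over $\mathcal{U}(\fp)$ by $1\otimes E_{\eta,\nu}^\ssv$, so the functor $\mathcal{U}(\fg)\otimes_{\mathcal{U}(\fp)}(-)$ is left adjoint to restriction from $(\fg,P)$-modules to $P$-modules; evaluating this adjunction at the $(\fg,P)$-module $M_\fp(\sigma^\ssv,-\lambda)$ identifies $\Hom_{\fg,P}(M_\fp(\eta^\ssv,-\nu),M_\fp(\sigma^\ssv,-\lambda))$ with $\Hom_P(E_{\eta,\nu}^\ssv,M_\fp(\sigma^\ssv,-\lambda))$, a $(\fg,P)$-homomorphism being recovered from its restriction to $1\otimes E_{\eta,\nu}^\ssv$ via $u\otimes x^\ssv\mapsto u\cdot\varphi(x^\ssv)$. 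So it suffices to produce \eqref{eqn:dualityP}.

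To build $\mathcal{D}_{H\to D}$ I would realize $I_P(\sigma,\lambda)$ on smooth $F\colon G\to W$ with $F(gp)=\pi_{\sigma,\lambda}(p)^{-1}F(g)$ for $p\in P$, where $\pi_{\sigma,\lambda}$ is the $P$-action on $W_{\sigma,\lambda}$, and likewise for $I_P(\eta,\nu)$. The one computation everything hinges on is that for $p\in\mathcal{U}(\fp)$ the operator $R(p)$ acts on such an $F$ by a fixed endomorphism of $W$ depending only on $p$ --- this follows from $(R(X)F)(g)=-d\pi_{\sigma,\lambda}(X)F(g)$ for $X\in\fp$ together with the fact that $R(u)$ commutes with applying a constant endomorphism of $W$ pointwise --- so that, after dualizing, $\langle(R(up)F)(g),w^\ssv\rangle_W=\langle(R(u)F)(g),p\cdot w^\ssv\rangle_W$ for all $u\in\mathcal{U}(\fg)$, where $p\cdot$ is precisely the $\mathcal{U}(\fp)$-module structure on $W_{\sigma,\lambda}^\ssv$ used to form the generalized Verma module. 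This identity says exactly that the right-hand side of \eqref{eqn:duality2} is unchanged when the representative $\sum_i u_{x,i}\otimes(w_{x,i}^\ssv\otimes\mathbb{1}_{-(\lambda+\rho)})$ of $\varphi(x^\ssv\otimes\mathbb{1}_{-(\nu+\rho)})\in M_\fp(\sigma^\ssv,-\lambda)$ is altered within the balanced tensor product, so \eqref{eqn:duality2} defines $\mathcal{D}_{H\to D}(\varphi)F$ without ambiguity. I would then verify, in order: $\mathcal{D}_{H\to D}(\varphi)$ is a differential operator, since each $R(u_{x,i})$ is one and hence its value at $g$ depends only on a finite jet of $F$ at $g$; it is $G$-equivariant, since each $R(u)$ commutes with left translation; and $\mathcal{D}_{H\to D}(\varphi)F$ satisfies the transformation law of a section of $\mathcal{E}_{\eta,\nu}$, which one checks from $(R(u)F)(gp)=\pi_{\sigma,\lambda}(p)^{-1}(R(\Ad(p)u)F)(g)$ combined with the $P$-equivariance of $\varphi$ and the fact that $P$ acts on $M_\fp(\sigma^\ssv,-\lambda)$ by $\Ad$ on $\mathcal{U}(\fg)$ tensored with the action on $W_{\sigma,\lambda}^\ssv$. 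Linearity in $\varphi$ is clear.

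For bijectivity I would use the decomposition $\fg=\fnbar\oplus\fp$, which gives the PBW model $M_\fp(\sigma^\ssv,-\lambda)\cong\mathcal{U}(\fnbar)\otimes W_{\sigma,\lambda}^\ssv$, together with the standard fact that the jet at $e\in G$ of (the function attached to) a global smooth section of $\mathcal{W}_{\sigma,\lambda}$ can be prescribed arbitrarily and is recorded faithfully by the derivatives $(R(\bar n)F)(e)$ with $\bar n$ in a PBW basis of $\mathcal{U}(\fnbar)$ --- the right-invariant differentiations along the big cell $\bar NP$. Injectivity follows: if $\mathcal{D}_{H\to D}(\varphi)=0$, expand $\varphi(x^\ssv\otimes\mathbb{1}_{-(\nu+\rho)})=\sum_k\bar n_k\otimes(w_k^\ssv\otimes\mathbb{1}_{-(\lambda+\rho)})$ in the PBW model and feed into \eqref{eqn:duality2} sections $F$ with prescribed jet at $e$ to force every $w_k^\ssv$, hence $\varphi$, to vanish. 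For surjectivity, given $\mathcal{D}\in\Diff_G(I_P(\sigma,\lambda),I_P(\eta,\nu))$, its $G$-equivariance shows it is determined by the functionals $F\mapsto\langle(\mathcal{D}F)(e),x^\ssv\rangle_E$, each of which factors through a finite jet of $F$ at $e$ and hence, after the identification above, can be written as $\sum_i\langle(R(u_{x,i})F)(e),w_{x,i}^\ssv\rangle_W$ for finitely many $u_{x,i}\in\mathcal{U}(\fnbar)$ and $w_{x,i}^\ssv\in W_{\sigma,\lambda}^\ssv$; setting $\varphi(x^\ssv\otimes\mathbb{1}_{-(\nu+\rho)}):=\sum_i u_{x,i}\otimes(w_{x,i}^\ssv\otimes\mathbb{1}_{-(\lambda+\rho)})$ yields a linear map with $\mathcal{D}_{H\to D}(\varphi)=\mathcal{D}$, and $\varphi$ is seen to intertwine the $P$-actions by running the section-transformation computation of the previous paragraph in reverse, using that $\mathcal{D}F$ is genuinely a section and that $\mathcal{D}$ commutes with the isotropy action of $P$.

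I expect the surjectivity step to be the main obstacle, since it requires pinning down the identification of the jet space of $C^\infty(G/P,\mathcal{W}_{\sigma,\lambda})$ at $e$ with (the linear dual of) the PBW model $\mathcal{U}(\fnbar)\otimes W_{\sigma,\lambda}^\ssv$ and checking that $G$-equivariance cuts a general differential operator down to exactly the data of a $P$-homomorphism; the combinatorial content is routine, but keeping track of the $\rho$-shift, of passing to contragredients, and of the $\Ad$-diagonal action of $P$ on the generalized Verma module is what makes \eqref{eqn:duality2} come out in the precise normalization stated. Everything else is either formal --- the adjunction of the first paragraph --- or a direct computation with right-invariant vector fields.
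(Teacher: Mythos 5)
The paper does not give a proof of Theorem~\ref{thm:duality}; it simply defers to the references \cite[Lem.~2.4]{CS90}, \cite[Thm.~2.9]{KP1}, and \cite[Prop.~1.2]{KR00}, so there is no in-paper argument for your sketch to diverge from. Your blind reconstruction is a correct account of the standard proof found in those sources: the passage between \eqref{eqn:dualityP} and \eqref{eqn:duality} is exactly Frobenius reciprocity for the induction $\mathcal{U}(\fg)\otimes_{\mathcal{U}(\fp)}(-)$ viewed on $(\fg,P)$-modules; the well-definedness of \eqref{eqn:duality2} comes down to $(R(X)F)(g)=-d\pi_{\sigma,\lambda}(X)F(g)$ for $X\in\fp$, which after dualizing matches the $\mathcal{U}(\fp)$-action on $W^\ssv_{\sigma,\lambda}$ (and one should note, as your phrasing slightly obscures, that iterating this produces the anti-homomorphism $p\mapsto d\pi_{\sigma,\lambda}(\check p)$, which is precisely what the contragredient module structure absorbs); and bijectivity rests on the PBW isomorphism $M_\fp(\sigma^\ssv,-\lambda)\cong\mathcal{U}(\fnbar)\otimes W^\ssv_{\sigma,\lambda}$ paired against the full jet of a section at $eP$ in the $\bar N$-chart, together with the fact that $G$-equivariance reduces a differential operator to its jet functional at $e$. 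The only loose phrase is that $M_\fp(\eta^\ssv,-\nu)$ is ``freely generated over $\mathcal{U}(\fp)$ by $1\otimes E^\ssv_{\eta,\nu}$''; what you mean, and what drives the adjunction, is that $\mathcal{U}(\fg)$ is free as a right $\mathcal{U}(\fp)$-module by PBW, so the universal property of $\otimes_{\mathcal{U}(\fp)}$ applies. With that reading, the argument is complete and in the spirit of the cited proofs.
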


\subsection{Tensored operator $\D \otimes \mathrm{id}_\xi$}
\label{subsec:TOP}

For some representations $\xi$ of $M$, 
a differential operator $\Cal{D}  
\in \mathrm{Diff}_{G}(I_P(\sigma, \lambda), I_P(\eta,\nu))$
induces another operator 
$\Cal{D} \otimes \mathrm{id}_{\xi} \in 
\mathrm{Diff}_{G}(I_P(\sigma\otimes \xi, \lambda), I_P(\eta \otimes \xi,\nu))$.
To describe it more carefully, 
let $\Rep(M)_{\fin}$ denote the set of 
finite-dimensional representations of $M$.
As $M$ is not connected in general,
we write $M_0$ for the identity component of $M$.
Let $\Rep(M/M_0)$ denote the set of 
representations of  the component group $M/M_0$.
Via the surjection $M \twoheadrightarrow M/M_0$, 
we regard $\Rep(M/M_0)$ as a subset of $\Rep(M)_{\fin}$.

For $M_\fp\left(\sigma^\ssv, -\lambda\right)$, 
we write
$M_\fp\left(\sigma^\ssv, -\lambda\right)^\fn$
for the subspace of $\fn$-invariant elements of
$M_\fp\left(\sigma^\ssv, -\lambda\right)$, namely,
\begin{equation}\label{eqn:n-inv}
M_\fp\left(\sigma^\ssv, -\lambda\right)^\fn
:=\{v\in 
M_\fp\left(\sigma^\ssv, -\lambda\right) : 
X \cdot v = 0 
\text{ for all $X \in \fn$}\}.
\end{equation}
As $MA$ normalizes $\fn$, 
the subspace $M_\fp\left(\sigma^\ssv, -\lambda\right)^\fn$ is 
an $MA$-subrepresentation of $M_\fp\left(\sigma^\ssv, -\lambda\right)$.
Then, for $M_\fp(\eta^\ssv, -\nu)=\Cal{U}(\fg)\otimes_{\Cal{U}(\fp)}E^\ssv_{\eta,\nu}$,
we have
\begin{equation}\label{eqn:n}
\begin{aligned}
\Hom_{\mathfrak{g}, P}\left(
M_\fp\left(\eta^\ssv, -\nu\right),
M_\fp\left(\sigma^\ssv, -\lambda\right)\right)
&\simeq
\Hom_{P}\left(E^\ssv_{\eta,\nu}, 
M_\fp\left(\sigma^\ssv, -\lambda\right)\right) \\
&=
\Hom_{MA}\left(E^\ssv_{\eta,\nu}, 
M_\fp\left(\sigma^\ssv, -\lambda\right)^\fn\right).
\end{aligned}
\end{equation}

Now, for $U_\xi :=(\xi, U) \in \Rep(M/M_0)$,
we define a linear isomorphism
\begin{equation*}
\psi_{(\sigma^\ssv,-\lambda;\;\xi^\ssv)}\colon
W^{\ssv}_{\sigma,\lambda} \otimes U^\ssv_\xi
\stackrel{\sim}{\To}
(W^{\ssv}_{\sigma}\otimes U^\ssv_{\xi}) \otimes \C_{-(\lambda+\rho)}
\end{equation*}
simply by
\begin{equation*}
(w^\ssv\otimes  \mathbb{1}_{-(\lambda+\rho)}) \otimes z^\ssv
\mapsto 
(w^\ssv\otimes z^\ssv) \otimes \mathbb{1}_{-(\lambda+\rho)}.
\end{equation*}
We set
\begin{equation*}
\widetilde{\psi}_{(\sigma^\ssv,-\lambda;\;\xi^\ssv)}:=
\mathrm{id}_{\Cal{U(\fg)}}\otimes \psi_{(\sigma^\ssv,-\lambda;\;\xi^\ssv)},
\end{equation*}
where $\mathrm{id}_{\Cal{U(\fg)}}$ is the identity map on $\Cal{U(\fg)}$.
By regarding $U^\ssv_{\xi}$ as a $P=MAN$-representation with trivial $AN$-action,
the map $\widetilde{\psi}_{(\sigma^\ssv,-\lambda;\;\xi^\ssv)}$ defines  
an $MA$-isomorphism
\begin{equation}\label{eqn:psi}
\widetilde{\psi}_{(\sigma^\ssv,-\lambda;\;\xi^\ssv)}\colon
M_\fp(\sigma^\ssv,-\lambda)\otimes U^\ssv_\xi
\stackrel{\sim}{\To}
M_\fp(\sigma^\ssv\otimes \xi^\ssv,-\lambda).
\end{equation}
Then, for $\varphi \in \Hom_{MA}\left(E^\ssv_{\eta,\nu}, 
M_\fp\left(\sigma^\ssv, -\lambda\right)^\fn\right)$, 
we define 
an $MA$-homomorphism
\begin{equation*}
\xvarphi \colon
(E^\ssv_{\eta} \otimes U^\ssv_\xi)\otimes \C_{-(\nu+\rho)}
\To
M_\fp(\sigma^\ssv\otimes \xi^\ssv,-\lambda)
\end{equation*}
by
\begin{equation}\label{eqn:wphi}
\xvarphi= \widetilde{\psi}_{(\sigma^\ssv,-\lambda;\;\xi^\ssv)}
\circ (\varphi \otimes \mathrm{id_{\xi^{\ssv}}})
\circ\psi_{(\eta^\ssv,-\nu;\;\xi^\ssv)}^{-1},
\end{equation}
where $\mathrm{id}_{\xi^\ssv}$ is the identity map on $U^\ssv_\xi$.

\begin{lem}\label{lem:tensor}
Let $ (\eta, \nu), (\sigma, \lambda) \in \Rep(M)_\fin\times \fa^*$ and
$\varphi \in \Hom_{MA}\left(E^\ssv_{\eta,\nu}, 
M_\fp\left(\sigma^\ssv, -\lambda\right)^\fn\right)$.
Then, for $\xi \in \Rep(M/M_0)$, 
we have
\begin{equation}\label{eqn:xphi}
\xvarphi \in
\Hom_{MA}\left(
(E^\ssv_{\eta} \otimes U^\ssv_\xi)\otimes \C_{-(\nu+\rho)},
M_\fp(\sigma^\ssv\otimes \xi^\ssv,-\lambda)^\fn\right).
\end{equation}
\end{lem}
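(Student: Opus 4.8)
The plan is to verify \eqref{eqn:xphi} by checking two things separately: that $\xvarphi$ is an $MA$-homomorphism into $M_\fp(\sigma^\ssv\otimes\xi^\ssv,-\lambda)$, and that its image actually lands in the $\fn$-invariants. The first point is almost immediate from the construction: $\xvarphi$ is defined in \eqref{eqn:wphi} as a composition of three maps, each of which is $MA$-equivariant. Indeed $\varphi$ is $MA$-equivariant by hypothesis, $\mathrm{id}_{\xi^\ssv}$ is trivially $M$-equivariant (and $\xi$ factors through $M/M_0$, with $AN$ acting trivially on $U^\ssv_\xi$, so there is no issue on the $A$-side), and the two maps $\psi_{(\eta^\ssv,-\nu;\,\xi^\ssv)}^{-1}$ and $\widetilde\psi_{(\sigma^\ssv,-\lambda;\,\xi^\ssv)}$ are $MA$-isomorphisms — the latter by the discussion preceding \eqref{eqn:psi}, and the former is an instance of the same construction. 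Hence the composite is an $MA$-map $(E^\ssv_\eta\otimes U^\ssv_\xi)\otimes\C_{-(\nu+\rho)}\to M_\fp(\sigma^\ssv\otimes\xi^\ssv,-\lambda)$; this is really just bookkeeping.

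The substantive step is to show $\Im(\xvarphi)\subset M_\fp(\sigma^\ssv\otimes\xi^\ssv,-\lambda)^\fn$. Here I would use the explicit form of $\widetilde\psi = \mathrm{id}_{\Cal U(\fg)}\otimes\psi$. Under the identification \eqref{eqn:psi}, an element of $M_\fp(\sigma^\ssv,-\lambda)\otimes U^\ssv_\xi$ of the form $\big(u\otimes(w^\ssv\otimes\mathbb 1_{-(\lambda+\rho)})\big)\otimes z^\ssv$ is sent to $u\otimes\big((w^\ssv\otimes z^\ssv)\otimes\mathbb 1_{-(\lambda+\rho)}\big)$, i.e. the action on the $\Cal U(\fg)$-tensor leg is untouched. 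Now $\fn$ acts on a generalized Verma module only through the $\Cal U(\fg)$-leg (left multiplication, followed by reduction through $\Cal U(\fp)$), and it acts on $U^\ssv_\xi$ trivially. Therefore, for $X\in\fn$, the action of $X$ commutes with $\widetilde\psi$ in the appropriate sense: if $v\in M_\fp(\sigma^\ssv,-\lambda)^\fn$ then $v\otimes z^\ssv$ is $\fn$-invariant in $M_\fp(\sigma^\ssv,-\lambda)\otimes U^\ssv_\xi$ (with the diagonal $\fn$-action, which here is just the action on the first factor), and $\widetilde\psi$ carries this to an $\fn$-invariant element of $M_\fp(\sigma^\ssv\otimes\xi^\ssv,-\lambda)$. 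Since by hypothesis $\Im(\varphi)\subset M_\fp(\sigma^\ssv,-\lambda)^\fn$, it follows that $\Im(\varphi\otimes\mathrm{id}_{\xi^\ssv})\subset M_\fp(\sigma^\ssv,-\lambda)^\fn\otimes U^\ssv_\xi$, and hence $\Im(\xvarphi)=\widetilde\psi\big(\Im(\varphi\otimes\mathrm{id}_{\xi^\ssv})\big)\subset M_\fp(\sigma^\ssv\otimes\xi^\ssv,-\lambda)^\fn$, as desired.

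The one place requiring genuine care — and what I expect to be the main obstacle — is pinning down precisely \emph{which} $\fn$-action is meant on the tensor product $M_\fp(\sigma^\ssv,-\lambda)\otimes U^\ssv_\xi$ and checking that $\widetilde\psi$ really is $\fn$-equivariant for it, not merely $MA$-equivariant. One must be careful that $\widetilde\psi=\mathrm{id}_{\Cal U(\fg)}\otimes\psi$ is \emph{not} a $\Cal U(\fg)$-module map in general; it is only used here at the level of the $\fn$-action, where things collapse because $\fn$ acts trivially on the $U^\ssv_\xi$ factor and the reduction $\Cal U(\fg)\otimes_{\Cal U(\fp)}(-)$ is compatible with tensoring the $\Cal U(\fp)$-module by the $P$-module $U^\ssv_\xi$ (on which $\fn\subset\fp$ acts by zero). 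I would state this compatibility explicitly as a short lemma or inline remark — essentially that $\big(\Cal U(\fg)\otimes_{\Cal U(\fp)}V\big)\otimes U\cong \Cal U(\fg)\otimes_{\Cal U(\fp)}(V\otimes U)$ as $\fn$-modules when $\fn$ acts trivially on $U$ — and then the $\fn$-invariance claim is an immediate consequence. Everything else is formal.
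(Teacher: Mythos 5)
Your proof is correct and follows essentially the same route as the paper's. The paper's argument consists of the same two observations: (i) since $\fp$ acts trivially on $U^\ssv_\xi$, the $MA$-isomorphism $\widetilde\psi_{(\sigma^\ssv,-\lambda;\,\xi^\ssv)}$ restricts to an $MA$-isomorphism $M_\fp(\sigma^\ssv,-\lambda)^\fn\otimes U^\ssv_\xi \simeq M_\fp(\sigma^\ssv\otimes\xi^\ssv,-\lambda)^\fn$, which is exactly the $\fn$-compatibility you flag as the substantive step; and (ii) since $\Im\varphi\subset M_\fp(\sigma^\ssv,-\lambda)^\fn$, the image of $\varphi\otimes\mathrm{id}_{\xi^\ssv}$ lands in the left-hand side of that isomorphism, and the conclusion follows from the definition of $\xvarphi$. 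Your version is somewhat more verbose — you separate the $MA$-equivariance bookkeeping from the $\fn$-invariance check and spell out that $\fn$ acts only through the $\Cal U(\fg)$-leg, which the paper leaves implicit — but there is no difference of substance, and the point you single out as requiring genuine care is precisely the one sentence the paper's proof rests on.
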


\begin{proof}
Let $U_\xi =(\xi, U) \in \Rep(M/M_0)$.
Observe that since $\fp$ acts on $U^\ssv_{\xi}$ trivially,
the linear isomorphism $\widetilde{\psi}_{(\sigma^\ssv,-\lambda;\;\xi^\ssv)}$ 
in \eqref{eqn:psi} induces an $MA$-isomorphism
\begin{equation}\label{eqn:tensor2}
M_\fp(\sigma^\ssv,-\lambda)^\fn\otimes U^\ssv_\xi
\simeq
M_\fp(\sigma^\ssv\otimes \xi^\ssv,-\lambda)^\fn.
\end{equation}
Since $\Im (\varphi \otimes \mathrm{id}_{\xi^\ssv}) \subset 
M_\fp(\sigma^\ssv,-\lambda)^\fn\otimes U^\ssv_\xi$ 
as $\Im \varphi \subset M_\fp(\sigma^\ssv,-\lambda)^\fn$,
the assertion simply follows from the definition \eqref{eqn:wphi} 
of $\xvarphi$.
\end{proof}

Via \eqref{eqn:n}, Lemma \ref{lem:tensor}
implies the following.

\begin{cor}\label{cor:tensor}
The following are equivalent on 
$(\sigma, \eta; \lambda, \nu) \in \Rep(M)_\fin^2\times (\fa^*)^2$.
\begin{enumerate}
\item[(i)]
$\Hom_{\mathfrak{g}, P}\left(
M_\fp\left(\eta^\ssv, -\nu\right),
M_\fp\left(\sigma^\ssv, -\lambda\right)\right) \neq \{0\}$.
\item[(ii)] 
$\Hom_{\mathfrak{g}, P}\left(
M_\fp\left(\eta^\ssv\otimes \xi^\ssv, -\nu\right),
M_\fp\left(\sigma^\ssv\otimes \xi^\ssv, -\lambda\right)\right)\neq \{0\}$ 
for any $\xi \in \Rep(M/M_0)$.
\end{enumerate}
\end{cor}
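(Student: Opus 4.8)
The plan is to transport both conditions to the picture of $\fn$-invariants furnished by \eqref{eqn:n} and then read off the equivalence from Lemma \ref{lem:tensor} together with one elementary non-vanishing remark. First I would use the chain of isomorphisms in \eqref{eqn:n} to rephrase (i) as $\Hom_{MA}(E^\ssv_{\eta,\nu}, M_\fp(\sigma^\ssv,-\lambda)^\fn)\neq\{0\}$ and, for each fixed $\xi\in\Rep(M/M_0)$, to rephrase the Hom space appearing in (ii) for that $\xi$ as $\Hom_{MA}\bigl((E^\ssv_\eta\otimes U^\ssv_\xi)\otimes\C_{-(\nu+\rho)},\,M_\fp(\sigma^\ssv\otimes\xi^\ssv,-\lambda)^\fn\bigr)\neq\{0\}$.

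For the implication (i)$\Rightarrow$(ii) I would fix a nonzero $\varphi\in\Hom_{MA}(E^\ssv_{\eta,\nu}, M_\fp(\sigma^\ssv,-\lambda)^\fn)$ realizing (i) and, for an arbitrary $\xi\in\Rep(M/M_0)$, form $\xvarphi$ as in \eqref{eqn:wphi}. Lemma \ref{lem:tensor} already places $\xvarphi$ in the desired space, so the only thing left is to check that $\xvarphi\neq0$. But in \eqref{eqn:wphi} the outer maps $\widetilde{\psi}_{(\sigma^\ssv,-\lambda;\,\xi^\ssv)}$ and $\psi_{(\eta^\ssv,-\nu;\,\xi^\ssv)}^{-1}$ are linear isomorphisms, so $\xvarphi\neq0$ is equivalent to $\varphi\otimes\mathrm{id}_{\xi^\ssv}\neq0$; and the latter holds because $\varphi\neq0$ while $U^\ssv_\xi\neq\{0\}$ (choosing $x$ with $\varphi(x)\neq0$ and $0\neq z\in U^\ssv_\xi$ gives $(\varphi\otimes\mathrm{id}_{\xi^\ssv})(x\otimes z)=\varphi(x)\otimes z\neq0$). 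Hence the relevant Hom space in (ii) is nonzero for every $\xi$.

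For (ii)$\Rightarrow$(i) I would simply specialize $\xi$ to the trivial representation $\mathbb{1}\in\Rep(M/M_0)$. Then $U^\ssv_\xi\simeq\C$ with trivial $M$-action, so $\eta^\ssv\otimes\xi^\ssv\simeq\eta^\ssv$ and $\sigma^\ssv\otimes\xi^\ssv\simeq\sigma^\ssv$, and the two generalized Verma modules occurring in (ii) are canonically identified with $M_\fp(\eta^\ssv,-\nu)$ and $M_\fp(\sigma^\ssv,-\lambda)$; thus the Hom space in (ii) for $\xi=\mathbb{1}$ is exactly the one in (i). I do not expect any genuine obstacle here: the whole statement is a bookkeeping consequence of \eqref{eqn:n} and Lemma \ref{lem:tensor}, and the single point deserving a line of care is the non-vanishing of $\xvarphi$, which reduces to the triviality that tensoring a nonzero linear map with the identity on a nonzero space is again nonzero.
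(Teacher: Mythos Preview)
Your proposal is correct and follows exactly the approach the paper intends: the paper's own proof is the single sentence ``Via \eqref{eqn:n}, Lemma \ref{lem:tensor} implies the following,'' and you have simply unpacked this by translating both sides through \eqref{eqn:n}, invoking Lemma \ref{lem:tensor} for (i)$\Rightarrow$(ii) with the obvious non-vanishing check, and specializing $\xi=\mathbb{1}$ for (ii)$\Rightarrow$(i).
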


It follows from the duality theorem that
the intertwining differential operator 
$\mathcal{D}_{H\to D}(\xvarphi)$
corresponding to the homomorphism $\xvarphi$ in \eqref{eqn:xphi} is given by
\begin{equation}\label{eqn:dxphi}
\mathcal{D}_{H\to D}(\xvarphi) = 
\mathcal{D}_{H\to D}(\varphi)\otimes \mathrm{id}_\xi.
\end{equation}
The differential-operator counterpart of Lemma \ref{lem:tensor}
and Corollary \ref{cor:tensor} is then given as follows.

\begin{lem}\label{lem:tensor2}
Let $(\eta, \nu), (\sigma, \lambda) \in \Rep(M)_\fin\times \fa^*$
and $\D \in \mathrm{Diff}_{G}(I_P(\sigma, \lambda), I_P(\eta,\nu))$.
Then, for $\xi \in \Rep(M/M_0)$, we have
\begin{equation*}
\D\otimes \mathrm{id}_\xi \in
\mathrm{Diff}_{G}
(I_P(\sigma\otimes \xi, \lambda), I_P(\eta\otimes \xi,\nu)).
\end{equation*}
In particular, the following are equivalent on 
$(\sigma, \eta; \lambda, \nu) \in \Rep(M)_\fin^2\times (\fa^*)^2$.
\begin{enumerate}
\item[(i)]
$\mathrm{Diff}_{G}(I_P(\sigma, \lambda), I_P(\eta,\nu)) \neq \{0\}$.
\item[(ii)] 
$\mathrm{Diff}_{G}
(I_P(\sigma\otimes \xi, \lambda), I_P(\eta\otimes \xi,\nu))\neq \{0\}$ 
for any $\xi \in \Rep(M/M_0)$.
\end{enumerate}
\end{lem}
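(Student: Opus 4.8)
The plan is to deduce Lemma~\ref{lem:tensor2} directly from the homomorphism-side statement already established in Lemma~\ref{lem:tensor} (equivalently Corollary~\ref{cor:tensor}) by transporting everything through the duality theorem. First I would observe that by \eqref{eqn:duality} in Theorem~\ref{thm:duality}, an element $\D \in \mathrm{Diff}_{G}(I_P(\sigma, \lambda), I_P(\eta,\nu))$ is of the form $\D = \mathcal{D}_{H\to D}(\varphi)$ for a (unique) $\varphi \in \Hom_{P}(E^\ssv_{\eta,\nu}, M_\fp(\sigma^\ssv, -\lambda))$, which by \eqref{eqn:n} is the same as an element of $\Hom_{MA}(E^\ssv_{\eta,\nu}, M_\fp(\sigma^\ssv, -\lambda)^\fn)$. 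Given $\xi \in \Rep(M/M_0)$, form $\xvarphi$ as in \eqref{eqn:wphi}; Lemma~\ref{lem:tensor} tells us $\xvarphi$ lands in $\Hom_{MA}((E^\ssv_{\eta} \otimes U^\ssv_\xi)\otimes \C_{-(\nu+\rho)}, M_\fp(\sigma^\ssv\otimes \xi^\ssv,-\lambda)^\fn)$, hence via \eqref{eqn:n} again corresponds to a genuine $(\fg,P)$-homomorphism $M_\fp(\eta^\ssv\otimes\xi^\ssv,-\nu) \to M_\fp(\sigma^\ssv\otimes\xi^\ssv,-\lambda)$. Applying the duality isomorphism $\mathcal{D}_{H\to D}$ in the opposite direction then produces an element of $\mathrm{Diff}_{G}(I_P(\sigma\otimes\xi, \lambda), I_P(\eta\otimes\xi,\nu))$, and by \eqref{eqn:dxphi} this element is exactly $\D \otimes \mathrm{id}_\xi$. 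This proves the first assertion.

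For the ``in particular'' equivalence: the implication (ii)$\Rightarrow$(i) is trivial, since taking $\xi$ to be the trivial representation recovers $\mathrm{Diff}_{G}(I_P(\sigma, \lambda), I_P(\eta,\nu))$. The implication (i)$\Rightarrow$(ii) is the content of the first part: given a nonzero $\D$, the construction above yields $\D \otimes \mathrm{id}_\xi$ for every $\xi$, and I would note this is nonzero because the duality map $\mathcal{D}_{H\to D}$ is a linear isomorphism and $\xvarphi \neq 0$ whenever $\varphi \neq 0$ (the maps $\widetilde{\psi}$ and $\psi$ in \eqref{eqn:wphi} being linear isomorphisms). Alternatively, and even more economically, the equivalence is just Corollary~\ref{cor:tensor} reinterpreted through the duality isomorphisms \eqref{eqn:dualityP}--\eqref{eqn:duality}: the two $\mathrm{Hom}$ spaces there are isomorphic to the corresponding $\mathrm{Diff}$ spaces, so condition (i) of Corollary~\ref{cor:tensor} matches condition (i) here and likewise for (ii).

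The proof is essentially bookkeeping, so I do not anticipate a genuine obstacle; the one point deserving a sentence of care is the non-vanishing of $\D \otimes \mathrm{id}_\xi$ when $\D \neq 0$, i.e.\ making sure that tensoring with $\mathrm{id}_\xi$ (and passing through the identifications \eqref{eqn:tensor2} and \eqref{eqn:n}) does not collapse a nonzero operator. This follows because $\varphi \mapsto \varphi \otimes \mathrm{id}_{\xi^\ssv}$ is injective (as $U_\xi^\ssv \neq 0$) and $\widetilde{\psi}, \psi$ are isomorphisms, so $\xvarphi \neq 0$; then injectivity of $\mathcal{D}_{H\to D}$ gives $\D \otimes \mathrm{id}_\xi \neq 0$ via \eqref{eqn:dxphi}. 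I would write the argument in two short paragraphs: one invoking Theorem~\ref{thm:duality}, \eqref{eqn:n}, Lemma~\ref{lem:tensor}, and \eqref{eqn:dxphi} to get the membership statement, and one deducing the equivalence from this together with the trivial direction (or simply citing Corollary~\ref{cor:tensor}).
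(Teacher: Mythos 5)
Your proof is correct and follows exactly the route the paper intends: the paper states Lemma \ref{lem:tensor2} without a separate proof, presenting it as the ``differential-operator counterpart'' of Lemma \ref{lem:tensor} and Corollary \ref{cor:tensor} obtained by transporting through the duality isomorphism \eqref{eqn:dualityP} together with \eqref{eqn:n} and \eqref{eqn:dxphi}, which is precisely what you do. Your extra sentence on the non-vanishing of $\D \otimes \mathrm{id}_\xi$ (injectivity of $\varphi \mapsto \xvarphi$ since $\psi$, $\widetilde{\psi}$ are isomorphisms and $U^\ssv_\xi \neq 0$) is a worthwhile detail the paper leaves implicit.
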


\subsection{Peter--Weyl type formula for the space of $K$-finite solutions}
\label{subsec:K-finite}

Let $K$ be the maximal compact subgroup of $G$ with Lie algebra $\fk_0$.
We next study the $K$-type decomposition of the space of $K$-finite solutions to 
intertwining differential operators.
The following lemma plays a key role.

\begin{lem}\label{lem:GK}
There exists a $(\Cal{U}(\fk \cap \fm), K\cap M)$-isomorphism
\begin{align}
\iota\colon
\mathcal{U}(\fk)\otimes_{\mathcal{U}(\fk \cap \fm)}  W^\ssv_{\sigma}
\stackrel{\sim}{\To}
\mathcal{U}(\fg) \otimes_{\mathcal{U}(\fp)} W^\ssv_{\sigma,\lambda},
\quad u\otimes w \mapsto u\otimes (w\otimes \mathbb{1}_{-(\lambda+\rho)}), \label{eqn:GK}
\end{align}
where $K\cap M$ acts on 
$\mathcal{U}(\fk)\otimes_{\mathcal{U}(\fk \cap \fm)}  W^\ssv_{\sigma}$
and 
$\mathcal{U}(\fg) \otimes_{\mathcal{U}(\fp)} W^\ssv_{\sigma,\lambda}$ diagonally.
\end{lem}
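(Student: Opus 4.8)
The plan is to identify both sides as induced modules and check that the obvious candidate map $\iota$ is well-defined, bijective, and equivariant. The key structural input is the decomposition $\fg = \fk \oplus \fs_0^{\C}$ coming from the Cartan involution, together with the compatible parabolic data: since $\fp = \fm \oplus \fa \oplus \fn$ and $\fk \cap \fp = \fk \cap \fm$, one has a vector-space decomposition $\fg = \fk \oplus \fp$ (this uses that $\fa \oplus \fn$ sits inside $\fs_0^{\C} \oplus (\text{the part of }\fk^{\perp})$, or more directly: $\fg = \fk + \fp$ since $\fp$ contains a minimal parabolic and hence $\fn^{\min} \oplus \fnbar^{\min}$ spans a complement to $\fk\cap\fm^{\min}$ inside... ) — the cleanest route is to invoke the Iwasawa-type decomposition $\fg = \fk \oplus \fa \oplus \fn$ at the minimal level and then note $\fp \supseteq \fa^{\min} \oplus \fn^{\min}$, giving $\fg = \fk + \fp$ with $\fk \cap \fp = \fk \cap \fm$. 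First I would record this decomposition and the resulting PBW consequence: $\Cal{U}(\fg) \cong \Cal{U}(\fk) \otimes_{\Cal{U}(\fk\cap\fm)} \Cal{U}(\fp)$ as $(\Cal{U}(\fk), \Cal{U}(\fp))$-bimodules (a ``mixed'' PBW statement).

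Next I would check that $\iota$ is well-defined. The map $\Cal{U}(\fk) \times W^\ssv_\sigma \to \Cal{U}(\fg)\otimes_{\Cal{U}(\fp)} W^\ssv_{\sigma,\lambda}$, $(u,w)\mapsto u \otimes (w\otimes\mathbb{1}_{-(\lambda+\rho)})$, is clearly $\C$-bilinear and $\Cal{U}(\fk)$-linear in the first slot; to descend to the tensor product over $\Cal{U}(\fk\cap\fm)$ I must verify that for $Z \in \fk\cap\fm$, $uZ\otimes w$ and $u\otimes Zw$ have the same image, i.e. $uZ \otimes (w\otimes\mathbb{1}_{-(\lambda+\rho)}) = u\otimes (Zw \otimes\mathbb{1}_{-(\lambda+\rho)})$ in $\Cal{U}(\fg)\otimes_{\Cal{U}(\fp)} W^\ssv_{\sigma,\lambda}$. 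This holds because $Z\in\fk\cap\fm\subseteq\fp$ acts on $w\otimes\mathbb{1}_{-(\lambda+\rho)}$ by $d\sigma^\ssv(Z)w\otimes\mathbb{1}_{-(\lambda+\rho)}$ (the $\C_{-(\lambda+\rho)}$ factor is killed since $Z$ is in $\fm$, not $\fa$), which is exactly the action defining $W^\ssv_\sigma$ as a $\Cal{U}(\fk\cap\fm)$-module. Then I would construct the inverse, or equivalently show bijectivity, by using the mixed PBW isomorphism above: any element of $\Cal{U}(\fg)\otimes_{\Cal{U}(\fp)} W^\ssv_{\sigma,\lambda}$ can be written uniquely as $\sum_j u_j \otimes (w_j \otimes \mathbb{1}_{-(\lambda+\rho)})$ with $u_j$ running over a PBW basis of $\fk$ transverse to $\fk\cap\fm$ — i.e. both sides are free over that basis with fiber $W^\ssv_\sigma$ — so $\iota$ carries a basis to a basis.

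Finally I would verify the $(\Cal{U}(\fk\cap\fm), K\cap M)$-equivariance. On the source, $K\cap M$ acts diagonally: on $\Cal{U}(\fk)$ by $\Ad$ and on $W^\ssv_\sigma$ by $\sigma^\ssv$; on the target, $K\cap M \subset P$ acts diagonally by $\Ad$ on $\Cal{U}(\fg)$ and by $\sigma^\ssv\otimes e^{-(\lambda+\rho)}\otimes\mathbb{1}$ on $W^\ssv_{\sigma,\lambda}$. Since $e^{-(\lambda+\rho)}$ is trivial on $K\cap M \subseteq M$, the target action restricts to exactly $\Ad\otimes\sigma^\ssv$, which matches; equivariance at the Lie-algebra level $\fk\cap\fm$ follows by differentiating (or directly from the same observation). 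Naturality in $\sigma$ is immediate from the formula. I expect the only genuine obstacle to be nailing down the mixed PBW isomorphism $\Cal{U}(\fg)\cong\Cal{U}(\fk)\otimes_{\Cal{U}(\fk\cap\fm)}\Cal{U}(\fp)$ cleanly — in particular that the multiplication map is an isomorphism of bimodules — but this is standard and can be cited or proved by a filtration/associated-graded argument using $\fg = \fk \oplus (\text{complement inside }\fp)$; everything else is bookkeeping with the trivial $A$-action on $K\cap M$.
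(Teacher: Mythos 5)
Your proposal is correct, and it follows the same route the paper implicitly takes: the paper's entire proof is the single sentence ``This is an immediate generalization of \cite[Lem.~2.1]{Kable11},'' and Kable's argument for the line-bundle case is precisely the mixed PBW decomposition you describe. Your write-up supplies exactly the details that citation elides. The structural facts you isolate — $\fg = \fk + \fp$ with $\fk\cap\fp = \fk\cap\fm$ (which, as you say, reduces to the Iwasawa decomposition $\fg = \fk \oplus \fa^{\min}\oplus\fn^{\min}$ once one observes $\fa^{\min}\oplus\fn^{\min}\subseteq\fp$), the resulting isomorphism $\Cal{U}(\fg)\simeq\Cal{U}(\fk)\otimes_{\Cal{U}(\fk\cap\fm)}\Cal{U}(\fp)$ given by multiplication, the triviality of the $\fk\cap\fm$-action on $\C_{-(\lambda+\rho)}$ needed for both well-definedness over $\Cal{U}(\fk\cap\fm)$ and for matching the restriction of $\sigma^\ssv\otimes e^{-(\lambda+\rho)}\otimes\mathbb{1}$ to $K\cap M$ — are the right ones, and your bijectivity argument via a PBW basis for a complement of $\fk\cap\fm$ in $\fk$ is the standard way to see it. The one small refinement I would make is cosmetic: the intersection computation $\fk\cap\fp=\fk\cap\fm$ is cleanest done by applying $\theta$ to $Z=Z_\fm+Z_\fa+Z_\fn\in\fk\cap\fp$ and comparing components in $\fm\oplus\fa\oplus\fn\oplus\bar\fn$, which forces $Z_\fa=Z_\fn=0$; this avoids the hand-waving in your parenthetical about $\fn^{\min}\oplus\bar\fn^{\min}$. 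It is also worth noting (though not needed for the lemma as stated) that $\iota$ is actually a left $\Cal{U}(\fk)$-module isomorphism, which is what later makes $R(u^\flat)$ computable on $K$-types.
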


\begin{proof}
This is an immediate generalization of  \cite[Lem.~2.1]{Kable11}.
\end{proof}

We identify 
$C^\infty\left(G/P, \mathcal{W}_{\sigma, \lambda}\right)$
with the space  
$C^\infty(K/(K\cap M), \Cal{W}_{\sigma,\lambda}\vert_{K})$
of smooth sections for the restricted vector bundle
$ \Cal{W}_{\sigma,\lambda}\vert_{K}\to K/(K\cap M)$, 
which is further identified as
\begin{equation*}
C^\infty\left(K/(K\cap M), \Cal{W}_{\sigma,\lambda}\vert_{K}\right)
\simeq \left\{F\in C^\infty(K)\otimes W_{\sigma}: 
F(km) = \sigma^{-1}(m)F(k)\; 
\text{for all $m \in K\cap M$}
\right\}.
\end{equation*}

Let $\Irr(K)$ denote the set of equivalence classes of irreducible
representations of $K$. 
For $V_\delta = (\delta, V) \in \Irr(K)$,
we denote by $(\cdot, \cdot)_\delta$ a $K$-invariant Hermitian inner product of 
$V_\delta$. We take the first argument of $(\cdot, \cdot)_\delta$ to be linear 
and the second to be conjugate linear.
We then identify the contragredient representation $(\delta^\ssv, V^\ssv_{\delta})$
with the conjugate representation $(\bar{\delta}, \bar{V}_{\delta})$ via the map
$\bar{v} \mapsto (\cdot, v)_{\delta}$.

Let $I_P(\sigma,\lambda)_K$ denote the $(\fg, K)$-module
consisting of the $K$-finite vectors of 
$C^\infty(K/(K\cap M), \Cal{W}_{\sigma,\lambda}\vert_{K})$.
It then follows from the Peter--Weyl theorem that,
via the identification $V^\ssv_\delta \simeq \bar{V}_\delta$,
there exists a $K$-isomorphism 
\begin{equation}\label{eqn:PW}
\Phi_{\sigma}\colon \bigoplus_{\delta \in \Irr(K)}
V_{\delta} \otimes \left(\bar{V}_{\delta} \otimes W_\sigma \right)^{K\cap M}
\stackrel{\sim}{\To} 
I_P(\sigma, \lambda)_K
\end{equation}
\noindent
with 
$\Phi_{\sigma}:=\bigoplus_{\delta \in \Irr(K)}\Phi_{(\sigma, \delta)}$,
where the map
\begin{equation*}
\Phi_{(\sigma,\delta)}\colon
V_\delta \otimes 
\left(\bar{V}_\delta \otimes W_\sigma \right)^{K\cap M}
\To
I_P(\sigma, \lambda)_K
\end{equation*}
is given by
\begin{equation}\label{eqn:Phi}
\Phi_{(\sigma, \delta)}(v \otimes \overline{v'} \otimes w)(k):=
(v, \delta(k)v')_{\delta}w
=(\delta(k^{-1})v, v')_{\delta}w.
\end{equation}

\noindent
We note that the direct sum on the left-hand side of \eqref{eqn:PW} is algebraic.

Let $\tau\colon \mathfrak{g} \to \mathfrak{g}$ be 
the conjugation of $\mathfrak{g}$ with respect to the 
real form $\mathfrak{g}_0$, that is,
$\tau(X+\sqrt{-1}Y) = X-\sqrt{-1}Y$ for $X,Y \in \fg_0$.
Given $\delta \in \Irr(K)$, 
let $d\delta$ denote
the infinitesimal representation of $\mathfrak{k}_0$.
As usual we extend $\tau$ and $d\delta$ 
to the universal enveloping algebras $\Cal{U}(\fg)$ and $\Cal{U}(\fk)$, respectively.
It then follows from \eqref{eqn:Phi} and the conjugate-linearity of the second argument
of $(\cdot, \cdot)_\delta$ that,
for $u \in \mathcal{U}(\mathfrak{k})$, we have
\begin{equation}\label{eqn:Rdiff}
R(u) 
\Phi_{(\sigma, \delta)}(v \otimes \overline{v'} \otimes w)(k)
=(\delta(k^{-1})v, d\delta(\tau(u))v')_\delta w.
\end{equation}
\vskip 0.3in

Now, for $\varphi \in \Hom_{P}\left(E_{\eta,\nu}^\ssv, 
M_\fp\left(\sigma^\ssv, -\lambda\right)  \right)$, we write
\begin{equation}\label{eqn:dphi}
\D_\varphi=\D_{H \to D}(\varphi),
\end{equation}
the intertwining differential operator corresponding to $\varphi$
under the isomorphism $\D_{H\to D}$ in \eqref{eqn:dualityP}. 
It follows from the duality theorem that 
any $\D \in \Diff_G(I_P(\sigma, \lambda),I_P(\eta,\nu))$
is of the form $\D=\D_\varphi$ for some
$\varphi \in \Hom_{P}\left(E_{\eta,\nu}^\ssv, 
M_\fp\left(\sigma^\ssv, -\lambda\right)  \right)$, that is,
\begin{equation*}
\Diff_G(I_P(\sigma, \lambda),I_P(\eta,\nu))
=\{\D_\varphi : \varphi \in \Hom_{P}\left(E_{\eta,\nu}^\ssv, 
M_\fp\left(\sigma^\ssv, -\lambda\right)  \right)\}.
\end{equation*}
Then, for $\D_\varphi\in\Diff_G(I_P(\sigma, \lambda),I_P(\eta,\nu))$, 
we set
\begin{alignat*}{3}
&\Cal{S}ol_{(\varphi; \lambda)}(\sigma)
&&:=\{F \in I_P(\sigma, \lambda) &&: \D_\varphi F = 0\},\\
&\Cal{S}ol_{(\varphi; \lambda)}(\sigma)_K
&&:=\{F \in I_P(\sigma, \lambda)_K &&: \D_\varphi F = 0\},
\end{alignat*}
the spaces of smooth solutions and $K$-finite solutions 
to $\D_\varphi$, respectively.

As $\D_\varphi$ is an intertwining operator, the solution space 
$\Cal{S}ol_{(\varphi; \lambda)}(\sigma)$ is a representation of $G$.
Similarly, 
the space $\Cal{S}ol_{(\varphi; \lambda)}(\sigma)_K$ of $K$-finite solutions
is a $(\fg, K)$-module.
We wish to understand $\Cal{S}ol_{(\varphi; \lambda)}(\sigma)_K$
via the Peter--Weyl theorem \eqref{eqn:PW}.
To do so, for $x^\ssv\otimes\mathbb{1}_{-(\nu+\rho)} \in E_{\eta,\nu}^\ssv$, 
let $\varphi(x^\ssv\otimes\mathbb{1}_{-(\nu+\rho)})=\sum_{i}u_{x,i} 
\otimes (w_{x,i}^\ssv\otimes \mathbb{1}_{-(\lambda+\rho)}) 
\in M_\fp\left(\sigma^\ssv, -\lambda\right)$.
For $u_{x,i} \in \Cal{U}(\fg)$, we write $u_{x,i}^\flat \in \Cal{U}(\fk)$ such that 
\begin{equation*}
u_{x,i}^\flat \otimes w^\ssv_{x,i}=\iota^{-1}(u_{x,i}
\otimes (w_{x,i}^\ssv\otimes \mathbb{1}_{-(\lambda+\rho)}) ),
\end{equation*}
where $\iota$ is the isomorphism in \eqref{eqn:GK}.
Then, for $T = \sum_{j}\bar{v}_j \otimes w_j \in \bar{V}_\delta \otimes W_\sigma$,
we set
\begin{equation}\label{eqn:DTx}
D_\varphi(T;x^\ssv) := \sum_{i,j}
\IP{w_j}{w_{x,i}^\ssv}_{\tiny W}\;d\delta(\tau(u_{x,i}^\flat))v_j.
\end{equation}

\begin{lem}\label{lem:DFx}
Let $T$ and $\varphi(x^\ssv\otimes\mathbb{1}_{-(\nu+\rho)})$ be as above.
Then, for $F(k):=\Phi_{(\sigma,\delta)}(v\otimes T)(k) \in I_P(\sigma, \lambda)_K$,
we have 
\begin{equation}\label{eqn:DFx}
\IP{\Cal{D}_\varphi F(k)}{x^\ssv}_{\tiny E}=
(\delta(k^{-1})v, D_\varphi(T;x^\ssv))_\delta.
\end{equation}
\end{lem}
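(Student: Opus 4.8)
The plan is to transform the two sides of \eqref{eqn:DFx} separately — the left one via the duality theorem, the right one via the explicit Peter--Weyl formulas \eqref{eqn:Phi}--\eqref{eqn:Rdiff} — and then to match them term by term, the only nonformal ingredient being the passage from $\Cal{U}(\fg)$ to $\Cal{U}(\fk)$ furnished by Lemma \ref{lem:GK}.

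First I would apply \eqref{eqn:duality2} to $\D_\varphi=\D_{H\to D}(\varphi)$ and evaluate at a point $k\in K$, which is harmless since a section of $\Cal{W}_{\sigma,\lambda}$ is determined by its restriction to $K$; with $\varphi(x^\ssv\otimes\mathbb{1}_{-(\nu+\rho)})=\sum_i u_{x,i}\otimes(w^\ssv_{x,i}\otimes\mathbb{1}_{-(\lambda+\rho)})$ this gives
\[
\IP{\D_\varphi F(k)}{x^\ssv}_E=\sum_i\IP{R(u_{x,i})F(k)}{w^\ssv_{x,i}}_W .
\]

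The central step is to replace each $R(u_{x,i})$ by right translation by $u^\flat_{x,i}\in\Cal{U}(\fk)$. For a fixed $F\in I_P(\sigma,\lambda)_K$, the assignment $u\otimes(w^\ssv\otimes\mathbb{1}_{-(\lambda+\rho)})\longmapsto\big(k\mapsto\IP{R(u)F(k)}{w^\ssv}_W\big)$ descends to the generalized Verma module $M_\fp(\sigma^\ssv,-\lambda)=\Cal{U}(\fg)\otimes_{\Cal{U}(\fp)}W^\ssv_{\sigma,\lambda}$: for $X\in\fp$ the $P$-equivariance of $F$ gives $R(X)F=\phi_X\circ F$ for the fixed fibre endomorphism $\phi_X=-d\pi_{\sigma,\lambda}(X)$, and since $\phi_X$ commutes past the differential operator $R(u)$ and its transpose is the contragredient action of $X$ on $W^\ssv_{\sigma,\lambda}$, one obtains $\IP{R(uX)F(k)}{w^\ssv}_W=\IP{R(u)F(k)}{X\cdot w^\ssv}_W$, which is exactly the relation defining $\otimes_{\Cal{U}(\fp)}$. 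Composing this well-defined map with the isomorphism $\iota$ of \eqref{eqn:GK} and unwinding $\iota^{-1}(u_{x,i}\otimes(w^\ssv_{x,i}\otimes\mathbb{1}_{-(\lambda+\rho)}))=u^\flat_{x,i}\otimes w^\ssv_{x,i}$ (possibly a sum of such tensors, with the evident summation in what follows) yields $\IP{R(u_{x,i})F(k)}{w^\ssv_{x,i}}_W=\IP{R(u^\flat_{x,i})F(k)}{w^\ssv_{x,i}}_W$, and hence
\[
\IP{\D_\varphi F(k)}{x^\ssv}_E=\sum_i\IP{R(u^\flat_{x,i})F(k)}{w^\ssv_{x,i}}_W .
\]
This reduction to $\Cal{U}(\fk)$ is the main obstacle; it is in essence Lemma \ref{lem:GK} (Kable's \cite[Lem.~2.1]{Kable11}) together with the fact that right translation by $\Cal{U}(\fp)$ on an induced representation is governed by the fibre action, and the only care required is to keep the $\fp$-module identifications (the contragredient actions, and the twist by $\C_{\pm(\lambda+\rho)}$) consistent throughout.

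Finally I would insert $F=\Phi_{(\sigma,\delta)}(v\otimes T)$ with $T=\sum_j\bar v_j\otimes w_j$. By linearity of $\Phi_{(\sigma,\delta)}$ and the translation formula \eqref{eqn:Rdiff},
\[
R(u^\flat_{x,i})F(k)=\sum_j\big(\delta(k^{-1})v,\,d\delta(\tau(u^\flat_{x,i}))v_j\big)_\delta\,w_j ,
\]
so pairing with $w^\ssv_{x,i}$, summing over $i$ and $j$, and pulling the scalars $\IP{w_j}{w^\ssv_{x,i}}_W$ through $(\cdot,\cdot)_\delta$ turns the right-hand side into $\big(\delta(k^{-1})v,\ \sum_{i,j}\IP{w_j}{w^\ssv_{x,i}}_W\,d\delta(\tau(u^\flat_{x,i}))v_j\big)_\delta$, which is $(\delta(k^{-1})v, D_\varphi(T;x^\ssv))_\delta$ by the definition \eqref{eqn:DTx}. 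This establishes \eqref{eqn:DFx}.
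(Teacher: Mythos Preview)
Your proof is correct and follows exactly the route the paper has in mind: the paper's own proof is the single sentence ``This is a direct consequence of \eqref{eqn:duality2} and \eqref{eqn:Rdiff},'' and what you have written is precisely the unpacking of that sentence, including the passage from $u_{x,i}\in\Cal{U}(\fg)$ to $u^\flat_{x,i}\in\Cal{U}(\fk)$ via Lemma~\ref{lem:GK} that the paper leaves implicit. Your justification of that passage (checking that $u\otimes(w^\ssv\otimes\mathbb{1}_{-(\lambda+\rho)})\mapsto\IP{R(u)F}{w^\ssv}_W$ descends to the generalized Verma module) is the right one and is more careful than the paper itself.
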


\begin{proof}
This is a direct consequence of \eqref{eqn:duality2} and \eqref{eqn:Rdiff}.
\end{proof}

For $\varphi \in \Hom_{P}\left(E_{\eta,\nu}^\ssv, 
M_\fp\left(\sigma^\ssv, -\lambda\right)  \right)$
and $J_{\delta, \sigma} \subset \bar{V}_\delta \otimes W_\sigma$,
we define
\begin{equation}\label{eqn:Solphi}
\Sol_{(\varphi)}\left(J_{\delta, \sigma}\right)
:=\{T \in J_{\delta, \sigma} : 
D_\varphi(T;x^\ssv) = 0 \;\; \text{for all $x^\ssv \in E^\ssv_{\eta}$}\}.
\end{equation}
Observe that we have
\begin{equation}\label{eqn:Solphi3}
\Sol_{(\varphi)}\left((\bar{V}_\delta \otimes W_\sigma)^{K\cap M}\right)
=\Sol_{(\varphi)}\left(\bar{V}_\delta \otimes W_\sigma\right)^{K\cap M}.
\end{equation}
For convenience to the later arguments we choose
the expression $\Sol_{(\varphi)}\left(\bar{V}_\delta \otimes W_\sigma\right)^{K\cap M}$
in Theorem \ref{prop:SVW} below.

\begin{thm}[PW type formula for the VV case]
\label{prop:SVW}
Let $\D_\varphi \in \Diff_G(I_P(\sigma, \lambda),I_P(\eta,\nu))$.
Then the space of 
$\Cal{S}ol_{(\varphi; \lambda)}(\sigma)_K$ 
of $K$-finite solutions to 
$\D_\varphi$
has a $K$-type decomposition
\begin{equation}\label{eqn:0706}
\Cal{S}ol_{(\varphi; \lambda)}(\sigma)_K
\simeq \bigoplus_{\delta \in \Irr(K)}V_\delta \otimes 
\Sol_{(\varphi)}\left(\bar{V}_\delta \otimes W_\sigma\right)^{K\cap M}.
\end{equation}
\end{thm}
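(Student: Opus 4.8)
The plan is to compute the action of $\D_\varphi$ on each Peter--Weyl block of $I_P(\sigma,\lambda)_K$ using Lemma~\ref{lem:DFx}, and then recognize the kernel block-by-block. First I would fix $\delta\in\Irr(K)$ and consider a general $K$-finite vector $F$ lying in the $\delta$-isotypic component; by the Peter--Weyl isomorphism \eqref{eqn:PW} such an $F$ is a finite sum of vectors of the form $\Phi_{(\sigma,\delta)}(v\otimes T)$ with $v\in V_\delta$ and $T\in(\bar V_\delta\otimes W_\sigma)^{K\cap M}$. Since $\D_\varphi$ is $K$-equivariant and the decomposition \eqref{eqn:PW} is multiplicity-respecting on the $V_\delta$ factor, it suffices to understand, for a single simple tensor $v\otimes T$, when $\D_\varphi\Phi_{(\sigma,\delta)}(v\otimes T)=0$.

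Next I would invoke Lemma~\ref{lem:DFx}: for $F(k)=\Phi_{(\sigma,\delta)}(v\otimes T)(k)$ and any $x^\ssv\otimes\mathbb{1}_{-(\nu+\rho)}\in E^\ssv_{\eta,\nu}$ we have
\begin{equation*}
\IP{\Cal{D}_\varphi F(k)}{x^\ssv}_{\tiny E}=(\delta(k^{-1})v,\,D_\varphi(T;x^\ssv))_\delta .
\end{equation*}
The section $\D_\varphi F$ (which a priori lies in $I_P(\eta,\nu)_K$) vanishes identically if and only if, for every $x^\ssv$, the right-hand side vanishes for all $k\in K$. Because $(\cdot,\cdot)_\delta$ is nondegenerate and $\{\delta(k^{-1})v : k\in K\}$ spans $V_\delta$ whenever $v\neq 0$ (as $\delta$ is irreducible), this happens precisely when $D_\varphi(T;x^\ssv)=0$ for all $x^\ssv\in E^\ssv_\eta$, i.e. when $T\in\Sol_{(\varphi)}(\bar V_\delta\otimes W_\sigma)$. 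Combined with the constraint $T\in(\bar V_\delta\otimes W_\sigma)^{K\cap M}$ coming from \eqref{eqn:PW}, and using the identity \eqref{eqn:Solphi3}, the $\delta$-block of $\Cal{S}ol_{(\varphi;\lambda)}(\sigma)_K$ is exactly $V_\delta\otimes\Sol_{(\varphi)}(\bar V_\delta\otimes W_\sigma)^{K\cap M}$. Summing over $\delta\in\Irr(K)$ and noting that $\Phi_\sigma$ is a $K$-isomorphism yields \eqref{eqn:0706} as $K$-representations; the $K$-structure on the right is the obvious one on the first tensor factor, which matches because $\Phi_{(\sigma,\delta)}$ is $K$-equivariant in $v$.

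I expect the one genuinely delicate point to be the ``vanishing of the section $\iff$ vanishing of $D_\varphi(T;x^\ssv)$'' step: one must be careful that $\D_\varphi F$ as an element of $C^\infty(G/P,\Cal{E}_{\eta,\nu})$ is zero iff all its pairings $\IP{\D_\varphi F(k)}{x^\ssv}_E$ vanish for $k$ ranging over $K$ (equivalently over $G$, via the section identification over $K/(K\cap M)$), and that passing from a single simple tensor to a general element of the multiplicity space is legitimate — this is where one uses that $\D_\varphi$ preserves $K$-isotypic components and acts only through the multiplicity factor, so the kernel is itself a direct sum of the form $\bigoplus_\delta V_\delta\otimes(\text{subspace of the multiplicity space})$. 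Everything else is a direct bookkeeping of the isomorphisms \eqref{eqn:PW}, \eqref{eqn:GK}, and the definition \eqref{eqn:DTx}, together with Lemma~\ref{lem:DFx}.
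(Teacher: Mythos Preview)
Your proposal is correct and follows essentially the same route as the paper: both use the Peter--Weyl decomposition \eqref{eqn:PW}, apply Lemma~\ref{lem:DFx} to identify the kernel condition on the multiplicity space, and finish with \eqref{eqn:Solphi3}. The only stylistic difference is that the paper dispatches your ``delicate point'' in one line by observing at the outset that $\Cal{S}ol_{(\varphi;\lambda)}(\sigma)_K$ is a $K$-subrepresentation of $I_P(\sigma,\lambda)_K$, hence automatically of the form $\bigoplus_\delta V_\delta\otimes S_{\delta,\sigma}$ for some subspaces $S_{\delta,\sigma}\subset(\bar V_\delta\otimes W_\sigma)^{K\cap M}$, so that one need only identify $S_{\delta,\sigma}$ via \eqref{eqn:DFx}; your irreducibility/spanning argument accomplishes the same thing more explicitly.
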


\begin{proof}
Since $\Cal{S}ol_{(\varphi; \lambda)}(\sigma)_K$ is 
a $K$-subrepresentation of $I_P(\sigma,\lambda)_K$,
there exists a subspace 
$S_{\delta,\sigma} \subset (\bar{V}_\delta \otimes W_\sigma)^{K\cap M}$
such that 
\begin{equation*}
\Cal{S}ol_{(\varphi; \lambda)}(\sigma)_K
\simeq \bigoplus_{\delta \in \Irr(K)}V_\delta \otimes S_{\delta,\sigma}.
\end{equation*}
Observe that $F \in I_P(\sigma,\lambda)_K$ 
is in $\Cal{S}ol_{(\varphi; \lambda)}(\sigma)_K$ if and only if 
$\IP{\Cal{D}_\varphi F}{x^\ssv}_E = 0$ for all $x^\ssv \in E_{\eta}^\ssv$.
It then follows from \eqref{eqn:DFx} that we have
$S_{\delta,\sigma} = 
\Sol_{(\varphi)}\left((\bar{V}_\delta \otimes W_\sigma)^{K\cap M}\right)$.
Now \eqref{eqn:Solphi3} concludes the theorem.
\end{proof}

\begin{rem}
In the case that  $\sigma$ is one-dimensional, the isomorphism \eqref{eqn:0706}
is obtained in \cite[Thm.~2.6]{Kable11}
in the framework of conformally invariant systems.
\end{rem}

\subsection{Specialization to the case $(\Cal{L}_{\chi,\lambda},\, \Cal{E}_{\eta,\nu})$}
\label{subsec:line}

We now investigate the $K$-isomorphism \eqref{eqn:0706} 
for the case that the vector bundle
$\Cal{W}_{\sigma,\lambda}$ is specialized to
a line bundle $\Cal{L}_{\chi,\lambda}$
with character $\chi$ of $M$.
In this case intertwining differential operators
$\D \in \Diff_G(I_P(\chi,\lambda), I_P(\eta, \nu))$ are of the form
$\D = \D_\varphi$ for some
$\varphi \in \Hom_{P}\left(E_{\eta,\nu}^\ssv, 
M_\fp\left(\chi^{-1}, -\lambda\right)  \right)$.

Let $\varphi \in \Hom_{P}\left(E_{\eta,\nu}^\ssv, 
M_\fp\left(\chi^{-1}, -\lambda\right)  \right)$ and take
$x^\ssv \otimes\mathbb{1}_{-(\nu+\rho)} \in E_{\eta,\nu}^\ssv$.
We have $\varphi(x^\ssv\otimes\mathbb{1}_{-(\nu+\rho)} ) = u_x\otimes 
(\mathbb{1}_{\chi^{-1}}\otimes \mathbb{1}_{-(\lambda+\rho)})$ 
for some $u_x \in \Cal{U}(\bar{\fn})$,
where $\bar{\fn}$ denotes the opposite nilpotent radical to $\fn$.
(Here we confuse the notation $\bar{\fn}$ with $\bar{V}_\delta$.)
It follows from \eqref{eqn:DTx} that, 
for $\bar{v} \otimes \mathbb{1}_\chi \in \bar{V}_{\delta}\otimes \C_\chi$, 
we have
\begin{equation*}
D_\varphi(\bar{v}\otimes \mathbb{1}_\chi;x^\ssv) = d\delta(\tau(u^\flat_x))v.
\end{equation*}
Then we simply write 
\begin{equation*}
D_\varphi(v; x^\ssv) = D_\varphi(\bar{v} \otimes \mathbb{1}_\chi; x^\ssv).
\end{equation*}
We set
\begin{equation}\label{eqn:Solphi2}
\Sol_{(\varphi)}(\delta):=
\{v \in V_\delta : D_\varphi(v;x^\ssv) = 0 \;\; \text{for all $x^\ssv \in E^\ssv_{\eta}$}\}.
\end{equation}

\begin{lem}
For $m \in K\cap M$, we have 
\begin{equation}\label{eqn:Dmv}
D_\varphi(\delta(m)v ; x^\ssv) = \delta(m)D_\varphi(v; \chi(m^{-1})\eta^\ssv(m^{-1})x^\ssv).
\end{equation}
\end{lem}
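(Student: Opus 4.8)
The plan is to deduce \eqref{eqn:Dmv} from the $K\cap M$-equivariance of the two ingredients that define $D_\varphi$ — the homomorphism $\varphi$ and the isomorphism $\iota$ of Lemma \ref{lem:GK} — combined with the compatibility of the conjugation $\tau$ and the differential $d\delta$ with $\Ad(K)$. First I would record how $u_x^\flat$ transforms under $K\cap M$. Here the fibre is the line $\C_{\chi^{-1}}$, so \eqref{eqn:GK} is a $(\Cal{U}(\fk\cap\fm),K\cap M)$-isomorphism $\iota\colon\Cal{U}(\fk)\otimes_{\Cal{U}(\fk\cap\fm)}\C_{\chi^{-1}}\stackrel{\sim}{\To}M_\fp(\chi^{-1},-\lambda)$, and by definition $(\iota^{-1}\circ\varphi)(x^\ssv\otimes\mathbb{1}_{-(\nu+\rho)})=u_x^\flat\otimes\mathbb{1}_{\chi^{-1}}$. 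Since $\varphi$ is a $P$-homomorphism it is in particular $K\cap M$-equivariant, hence so is the complex-linear composite $\iota^{-1}\circ\varphi$. Now $m\in K\cap M\subset M$ acts on $E^\ssv_{\eta,\nu}=E^\ssv_\eta\otimes\C_{-(\nu+\rho)}$ by $\eta^\ssv(m)$ on the first factor and trivially on the second, while it acts on $\Cal{U}(\fk)\otimes_{\Cal{U}(\fk\cap\fm)}\C_{\chi^{-1}}$ diagonally by $\Ad(m)$ and $\chi^{-1}(m)$. Evaluating equivariance of $\iota^{-1}\circ\varphi$ at $x^\ssv\otimes\mathbb{1}_{-(\nu+\rho)}$ therefore yields, in $\Cal{U}(\fk)\otimes_{\Cal{U}(\fk\cap\fm)}\C_{\chi^{-1}}$,
\begin{equation*}
u_{\eta^\ssv(m)x^\ssv}^\flat\otimes\mathbb{1}_{\chi^{-1}}
=\chi(m)^{-1}\,\bigl(\Ad(m)\,u_x^\flat\bigr)\otimes\mathbb{1}_{\chi^{-1}}
\qquad(m\in K\cap M),
\end{equation*}
so that $\chi(m)^{-1}\Ad(m)u_x^\flat\in\Cal{U}(\fk)$ may be taken as a representative of $u_{\eta^\ssv(m)x^\ssv}^\flat$.

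Second I would assemble the identity. Replacing $m$ by $m^{-1}$ above and using that $x^\ssv\mapsto u_x^\flat$ is complex-linear, one obtains $u_{\chi(m^{-1})\eta^\ssv(m^{-1})x^\ssv}^\flat=\chi(m^{-1})\chi(m)\,\Ad(m^{-1})u_x^\flat=\Ad(m^{-1})u_x^\flat$, the two characters cancelling simply because $\chi(m^{-1})\chi(m)=\chi(e)=1$ (no conjugation is involved, this happening at the level of $\Cal{U}(\fk)$ before $\tau$ is applied). Since $m\in K$, the operator $\Ad(m^{-1})$ preserves $\Cal{U}(\fk)$ and commutes with $\tau$ (both fix the real form $\fg_0$ pointwise, one $\C$-linearly, the other conjugate-linearly), and $d\delta(\Ad(m^{-1})Z)=\delta(m)^{-1}\,d\delta(Z)\,\delta(m)$ for $Z\in\Cal{U}(\fk)$. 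Hence, using the defining formula $D_\varphi(w;y^\ssv)=d\delta(\tau(u_y^\flat))w$,
\begin{equation*}
D_\varphi\bigl(v;\chi(m^{-1})\eta^\ssv(m^{-1})x^\ssv\bigr)
=d\delta\bigl(\tau(\Ad(m^{-1})u_x^\flat)\bigr)v
=\delta(m)^{-1}\,d\delta(\tau(u_x^\flat))\,\delta(m)v
=\delta(m)^{-1}\,D_\varphi(\delta(m)v;x^\ssv),
\end{equation*}
and multiplying on the left by $\delta(m)$ gives \eqref{eqn:Dmv}.

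I do not expect a serious obstacle; the proof is essentially bookkeeping. The one point that needs care is keeping straight which space $m$ acts on and by what, so that the scalar factors $\chi(m^{\pm1})$ and the adjoint factors $\Ad(m^{\pm1})$ enter with the correct exponents and ultimately cancel. A secondary, purely formal point is that $u_x^\flat$ is only a chosen representative modulo the relations defining $\Cal{U}(\fk)\otimes_{\Cal{U}(\fk\cap\fm)}\C_{\chi^{-1}}$; this is harmless, since $D_\varphi$ is the well-defined object under study, so in the computation above we are free to use whichever representative of $u_{\chi(m^{-1})\eta^\ssv(m^{-1})x^\ssv}^\flat$ — namely $\Ad(m^{-1})u_x^\flat$ — is convenient.
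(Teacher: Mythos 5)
Your proof is correct and follows essentially the same route as the paper: the paper uses $P$-equivariance of $\varphi$ at the level of $\Cal{U}(\fg)$ to compute $\varphi(\eta^\ssv(m^{-1})x^\ssv)=\chi(m)\Ad(m^{-1})u_x\otimes(\mathbb{1}_{\chi^{-1}}\otimes\mathbb{1}_{-(\lambda+\rho)})$ and then applies $\flat$, while you apply the $K\cap M$-equivariance of $\iota^{-1}\circ\varphi$ directly at the level of $\Cal{U}(\fk)$, the two being equivalent because $\iota$ is a $K\cap M$-isomorphism. The only minor imprecision is the parenthetical that $\Ad(m^{-1})$ ``fixes $\fg_0$ pointwise'' — you mean it preserves $\fg_0$, which is what makes it commute with $\tau$.
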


\begin{proof}
We have
\begin{align*}
D_\varphi(\delta(m)v;x^\ssv) 
=d\delta(\tau(u_x^\flat))\delta(m)v
=\delta(m)d\delta(\tau((\Ad(m^{-1})u_x)^\flat))v.
\end{align*}
On the other hand, as $\varphi \in \Hom_{P}\left(E_{\eta,\nu}^\ssv,  
M_\fp\left(\chi^{-1}, -\lambda\right)  \right)$, we have
\begin{align*}
\varphi(\eta^\ssv(m^{-1})x^\ssv)
=m^{-1} \cdot \varphi(x^\ssv)
=\chi(m) \Ad(m^{-1})u_x\otimes 
(\mathbb{1}_{\chi^{-1}}\otimes \mathbb{1}_{-(\lambda+\rho)}).
\end{align*}
Thus $D_\varphi(v; \chi(m^{-1})\eta^\ssv(m^{-1})x^\ssv)= d\delta(\tau((\Ad(m^{-1})u_x)^\flat))v$.
Now the lemma follows.
\end{proof}

\begin{lem}\label{lem:SolM}
The space 
$\Sol_{(\varphi)}(\delta)$ is a $K\cap M$-representation.
\end{lem}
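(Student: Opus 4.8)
The plan is to show that $\Sol_{(\varphi)}(\delta)$ is stable under the action of $\delta(m)$ for every $m \in K\cap M$. Since $\Sol_{(\varphi)}(\delta)$ is defined in \eqref{eqn:Solphi2} as the common kernel $\{v \in V_\delta : D_\varphi(v;x^\ssv) = 0 \text{ for all } x^\ssv \in E^\ssv_{\eta}\}$, it suffices to check that if $v$ lies in this kernel then so does $\delta(m)v$, i.e.\ that $D_\varphi(\delta(m)v;x^\ssv) = 0$ for all $x^\ssv$.

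The key tool is the identity \eqref{eqn:Dmv}, namely
\begin{equation*}
D_\varphi(\delta(m)v ; x^\ssv) = \delta(m)D_\varphi(v; \chi(m^{-1})\eta^\ssv(m^{-1})x^\ssv),
\end{equation*}
which was just established. First I would take $v \in \Sol_{(\varphi)}(\delta)$ and an arbitrary $x^\ssv \in E^\ssv_{\eta}$. The element $\chi(m^{-1})\eta^\ssv(m^{-1})x^\ssv$ is again an element of $E^\ssv_{\eta}$ (here $\chi(m^{-1})$ is just a nonzero scalar and $\eta^\ssv(m^{-1})$ maps $E^\ssv_\eta$ to itself, since $\eta^\ssv$ is a representation of $M$ and $m \in K\cap M \subset M$). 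Hence, since $v$ is in the solution space, $D_\varphi(v; \chi(m^{-1})\eta^\ssv(m^{-1})x^\ssv) = 0$, and applying $\delta(m)$ gives $D_\varphi(\delta(m)v;x^\ssv)=0$. As $x^\ssv$ was arbitrary, $\delta(m)v \in \Sol_{(\varphi)}(\delta)$. This shows $\delta(m)\Sol_{(\varphi)}(\delta) \subseteq \Sol_{(\varphi)}(\delta)$ for all $m \in K\cap M$, and applying the same to $m^{-1}$ gives equality, so $\Sol_{(\varphi)}(\delta)$ is a $K\cap M$-subrepresentation of $(\delta, V_\delta)$.

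There is no real obstacle here; the content of the lemma is entirely packaged into the preceding transformation rule \eqref{eqn:Dmv}, and the only thing to be careful about is the bookkeeping that $\chi(m^{-1})\eta^\ssv(m^{-1})x^\ssv$ stays inside $E^\ssv_\eta$ so that the defining condition for $\Sol_{(\varphi)}(\delta)$ can be invoked at that new point. One could alternatively phrase the argument without explicitly mentioning inverses by noting that $K\cap M$ is a group and the map $x^\ssv \mapsto \chi(m^{-1})\eta^\ssv(m^{-1})x^\ssv$ is a bijection of $E^\ssv_\eta$, so the vanishing of $D_\varphi(v;\,\cdot\,)$ on all of $E^\ssv_\eta$ is preserved; but the direct inclusion argument above is the cleanest.
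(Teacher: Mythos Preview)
Your proof is correct and follows essentially the same approach as the paper: both arguments take $v \in \Sol_{(\varphi)}(\delta)$ and $m \in K\cap M$, note that $\chi(m^{-1})\eta^\ssv(m^{-1})x^\ssv$ remains in $E^\ssv_\eta$, and then apply the transformation rule \eqref{eqn:Dmv} to conclude $D_\varphi(\delta(m)v; x^\ssv)=0$. Your additional remarks about the inverse and the bijection of $E^\ssv_\eta$ are correct but not needed, since stability under each $\delta(m)$ already makes $\Sol_{(\varphi)}(\delta)$ a $K\cap M$-subrepresentation.
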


\begin{proof}
Take $v \in \Sol_{(\varphi)}(\delta)$ and $m \in K\cap M$.
For $x^\ssv \in E^\ssv_{\eta}$, we have
$\chi(m^{-1})\Ad(m^{-1})x^\ssv \in E_{\eta}^\ssv$.
Thus
$D_\varphi(v; \chi(m^{-1})\eta^\ssv(m^{-1})x^\ssv)=0$
as $v \in \Sol_{(\varphi)}(\delta)$.
It then follows from 
\eqref{eqn:Dmv} that
\begin{equation*}
D_\varphi(\delta(m)v ; x) = \delta(m)D_\varphi(v; \chi(m^{-1})\eta^\ssv(m^{-1})x^\ssv)=0.
\end{equation*}
This concludes the lemma.
\end{proof}

\begin{prop}
[PW type formula for the LV case 1]
\label{prop:PWSol}
Let $\D_{\varphi}\in \mathrm{Diff}_{G}
(I_P(\chi, \lambda), I_P(\eta,\nu))$.
Then there exists a $K$-isomorphism
\begin{equation*}
\Cal{S}ol_{(\varphi; \lambda)}(\chi)_K
\simeq \bigoplus_{\delta \in \Irr(K)}V_\delta 
\otimes \Hom_{K\cap M}(\Sol_{(\varphi)}(\delta), \chi).
\end{equation*}
\end{prop}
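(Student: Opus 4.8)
The plan is to obtain the statement as the line-bundle specialization of Theorem~\ref{prop:SVW}, taking $\sigma=\chi$ so that $W_\sigma=\C_\chi$ is one-dimensional. Under this specialization the decomposition \eqref{eqn:0706} reads
\[
\Cal{S}ol_{(\varphi;\lambda)}(\chi)_K \simeq \bigoplus_{\delta\in\Irr(K)} V_\delta \otimes \Sol_{(\varphi)}(\bar V_\delta \otimes \C_\chi)^{K\cap M},
\]
so it suffices to exhibit, for each $\delta\in\Irr(K)$, a $K\cap M$-isomorphism between $\Sol_{(\varphi)}(\bar V_\delta\otimes\C_\chi)$ and $\Hom(\Sol_{(\varphi)}(\delta),\chi)$ and then pass to $K\cap M$-invariants on both sides.

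First I would use the explicit form of $D_\varphi$ in the line-bundle case, computed just before \eqref{eqn:Solphi2}, namely $D_\varphi(\bar v\otimes\mathbb{1}_\chi;x^\ssv)=d\delta(\tau(u_x^\flat))v=D_\varphi(v;x^\ssv)$. Comparing the definition \eqref{eqn:Solphi} of $\Sol_{(\varphi)}(\bar V_\delta\otimes\C_\chi)$ with the definition \eqref{eqn:Solphi2} of $\Sol_{(\varphi)}(\delta)$ then shows that $\bar v\otimes\mathbb{1}_\chi\in\Sol_{(\varphi)}(\bar V_\delta\otimes\C_\chi)$ if and only if $v\in\Sol_{(\varphi)}(\delta)$. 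Hence $\Sol_{(\varphi)}(\bar V_\delta\otimes\C_\chi)=\overline{\Sol_{(\varphi)}(\delta)}\otimes\C_\chi$, where $\overline{\Sol_{(\varphi)}(\delta)}$ denotes the image of $\Sol_{(\varphi)}(\delta)$ under the conjugation $V_\delta\to\bar V_\delta$.

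Next I would transfer this to the dual picture via the $K$-module identification $\bar V_\delta\simeq V_\delta^\ssv$, $\bar v\mapsto(\cdot,v)_\delta$, recorded just before \eqref{eqn:PW}. By Lemma~\ref{lem:SolM}, $\Sol_{(\varphi)}(\delta)$ is a $K\cap M$-submodule of $V_\delta$; since $(\cdot,\cdot)_\delta$ is $K$-invariant, its orthogonal complement $\Sol_{(\varphi)}(\delta)^{\perp}$ is also a $K\cap M$-submodule, and under the above identification $\overline{\Sol_{(\varphi)}(\delta)}$ is carried onto the annihilator of $\Sol_{(\varphi)}(\delta)^{\perp}$ in $V_\delta^\ssv$, which is $K\cap M$-isomorphic to $\Sol_{(\varphi)}(\delta)^\ssv$. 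Therefore, as $K\cap M$-modules,
\[
\Sol_{(\varphi)}(\bar V_\delta\otimes\C_\chi)\simeq\Sol_{(\varphi)}(\delta)^\ssv\otimes\C_\chi\simeq\Hom(\Sol_{(\varphi)}(\delta),\chi).
\]
Taking $K\cap M$-invariants yields $\Sol_{(\varphi)}(\bar V_\delta\otimes\C_\chi)^{K\cap M}\simeq\Hom_{K\cap M}(\Sol_{(\varphi)}(\delta),\chi)$, and substituting this into the $\sigma=\chi$ case of \eqref{eqn:0706} gives the claimed $K$-isomorphism.

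All of this is routine finite-dimensional linear algebra once Theorem~\ref{prop:SVW} is available; the only point requiring genuine care — the main, and rather minor, obstacle — is to verify that each identification is $K\cap M$-equivariant and not merely linear, in particular that the conjugate-linear map $V_\delta\to\bar V_\delta$ combined with the $\C_\chi$-twist produces the contragredient module $\Sol_{(\varphi)}(\delta)^\ssv$ rather than $\Sol_{(\varphi)}(\delta)$ itself. Here the $K$-invariance of $(\cdot,\cdot)_\delta$, Lemma~\ref{lem:SolM}, and the (automatic) unitarity of $\chi|_{K\cap M}$ are precisely what make the equivariance go through.
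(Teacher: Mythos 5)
Your proof is correct and follows the same route the paper takes: specialize Theorem~\ref{prop:SVW} to $\sigma=\chi$, identify $\Sol_{(\varphi)}(\bar V_\delta\otimes\C_\chi)=\overline{\Sol_{(\varphi)}(\delta)}\otimes\C_\chi$, pass through $\bar V_\delta\simeq V_\delta^\ssv$ to get $\Sol_{(\varphi)}(\delta)^\ssv\otimes\C_\chi\simeq\Hom(\Sol_{(\varphi)}(\delta),\chi)$, and take $K\cap M$-invariants. The paper records this as a terse three-line chain; you have merely filled in the equivariance justifications (the appeal to unitarity of $\chi|_{K\cap M}$ is in fact not needed, since $\Sol_{(\varphi)}(\delta)^\ssv\otimes\C_\chi\simeq\Hom(\Sol_{(\varphi)}(\delta),\C_\chi)$ is a purely algebraic identity, but this does not affect correctness).
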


\begin{proof}
We have
\begin{align*}
\Sol_{(\varphi)}\left(\bar{V}_\delta \otimes \C_{\chi}\right)^{K\cap M}
&=(\overline{\Sol_{(\varphi)}(\delta)}\otimes \C_\chi)^{K\cap M}\\
&\simeq(\Sol_{(\varphi)}(\delta)^\vee \otimes \C_\chi)^{K\cap M}\\
&\simeq 
\Hom_{K\cap M}(\Sol_{(\varphi)}(\delta), \chi).
\end{align*}
Now the proposition follows from Theorem \ref{prop:SVW}.
\end{proof}

\subsection{Solution space of the tensored operator $\D_\varphi\otimes \mathrm{id_\xi}$}
\label{subsec:tensor}

Now let the character $\chi$ of $M$ in Section \ref{subsec:line} 
be the trivial character $\chi= \chi_\triv$ and
take $U_\xi = (\xi, U) \in \Rep(M/M_0)$.
Let $\varphi \in \Hom_{MA}\left(E^\ssv_{\eta,\nu}, 
M_\fp\left(\chi_\triv, -\lambda\right)^\fn\right)$ so that
$\D_{\varphi}\in \mathrm{Diff}_{G}
(I_P(\chi_\triv, \lambda), I_P(\eta,\nu))$.

First observe that it follows from Lemma \ref{lem:tensor} that
\begin{equation*}
\xvarphi \in
\Hom_{MA}\left(
(E^\ssv_{\eta} \otimes U^\ssv_\xi)\otimes \C_{-(\nu+\rho)},
M_\fp\left(\xi^\ssv, -\lambda\right)^\fn\right).
\end{equation*}
Equivalently, by Lemma \ref{lem:tensor2}, we have
\begin{equation*}
\D_\varphi\otimes \mathrm{id}_\xi \in 
\mathrm{Diff}_{G}
(I_P(\xi, \lambda), I_P(\eta \otimes \xi,\nu)).
\end{equation*}
As $\D_{\xvarphi} = \D_{\varphi}\otimes \mathrm{id}_{\xi}$ by 
\eqref{eqn:dxphi} with \eqref{eqn:dphi}, the solution spaces
$\Cal{S}ol_{(\xvarphi; \lambda)}(\xi)$ and $\Cal{S}ol_{(\xvarphi; \lambda)}(\xi)_K$
of $\D_{\xvarphi}$ are given by
\begin{alignat*}{3}
&\Cal{S}ol_{(\xvarphi; \lambda)}(\xi)
&&=\{F \in I_P(\xi, \lambda) &&: (\D_\varphi\otimes \text{id}_\xi)F = 0\},\\
&\Cal{S}ol_{(\xvarphi; \lambda)}(\xi)_K
&&=\{F \in I_P(\xi, \lambda)_K &&: (\D_\varphi\otimes \text{id}_\xi)F = 0\}.
\end{alignat*}
It follows Theorem \ref{prop:SVW} that we have 
\begin{equation}\label{eqn:0715}
\Cal{S}ol_{(\varphi_\xi; \lambda)}(\xi)_K
\simeq \bigoplus_{\delta \in \Irr(K)}V_\delta \otimes 
\Sol_{(\xvarphi)}\left(\bar{V}_\delta \otimes U_\xi\right)^{K\cap M}.
\end{equation}

\begin{lem}\label{lem:Solxi}
For 
$\varphi \in \Hom_{MA}\left(E^\ssv_{\eta,\nu}, 
M_\fp\left(\chi_\triv, -\lambda\right)^\fn\right)$
and $U_\xi = (\xi, U) \in \Rep(M/M_0)$, we have
\begin{equation*}
\Sol_{(\xvarphi)}
\left(\bar{V}_\delta \otimes U_\xi\right)
= \overline{\Sol_{(\varphi)}(\delta)}\otimes U_\xi.
\end{equation*}
\end{lem}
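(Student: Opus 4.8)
The plan is to unwind the definition of $\Sol_{(\xvarphi)}$ and of the map $D_{\xvarphi}$ attached to the tensored homomorphism $\xvarphi$, and to compare it directly with $D_\varphi$. Recall from \eqref{eqn:DTx} and \eqref{eqn:Solphi} that $\Sol_{(\xvarphi)}(\bar V_\delta \otimes U_\xi)$ consists of those $T \in \bar V_\delta \otimes U_\xi$ with $D_{\xvarphi}(T; y^\ssv) = 0$ for all $y^\ssv \in (E_\eta \otimes U_\xi)^\ssv$. The first step is therefore to compute $D_{\xvarphi}(T; x^\ssv \otimes z^\ssv)$ explicitly for decomposable test vectors $x^\ssv \otimes z^\ssv \in E_\eta^\ssv \otimes U_\xi^\ssv$, using the formula \eqref{eqn:wphi} for $\xvarphi$ together with the fact that $\fp$ — and in particular $\fn$ — acts trivially on $U_\xi^\ssv$, so that the isomorphisms $\psi$ and $\widetilde\psi$ of \eqref{eqn:psi}, \eqref{eqn:tensor2} merely shuffle the $U$-factor past the $\C_{-(\lambda+\rho)}$ factor without touching the $\Cal U(\fg)$-part. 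Concretely, if $\varphi(x^\ssv \otimes \mathbb 1_{-(\nu+\rho)}) = u_x \otimes (\mathbb 1_{\chi_\triv}\otimes \mathbb 1_{-(\lambda+\rho)})$, then $\xvarphi((x^\ssv\otimes z^\ssv)\otimes \mathbb 1_{-(\nu+\rho)}) = u_x \otimes (z^\ssv \otimes \mathbb 1_{-(\lambda+\rho)})$, and the same element $u_x^\flat \in \Cal U(\fk)$ governs both $D_\varphi$ and $D_{\xvarphi}$.

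Having made this computation, the second step is to observe that for $T = \sum_j \bar v_j \otimes (\text{stuff in } U_\xi)$, the pairing in \eqref{eqn:DTx} decouples: the $U$-factor only enters through the scalar $\IP{\cdot}{z^\ssv}_U$, so that $D_{\xvarphi}(T; x^\ssv \otimes z^\ssv)$ is a $U$-contraction of $D_\varphi$ applied componentwise to the $\bar V_\delta$-slots of $T$. Writing $T = \sum_k \bar v_k \otimes w_k$ with $w_k \in U_\xi$, one gets that $D_{\xvarphi}(T; x^\ssv \otimes z^\ssv) = \sum_k \IP{w_k}{z^\ssv}_U \, d\delta(\tau(u_x^\flat))v_k = \sum_k \IP{w_k}{z^\ssv}_U\, D_\varphi(v_k; x^\ssv)$. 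Since the $z^\ssv$ range over all of $U_\xi^\ssv$, this vanishes for all $y^\ssv$ if and only if $D_\varphi(v_k; x^\ssv) = 0$ for every $k$ and every $x^\ssv \in E_\eta^\ssv$, i.e. if and only if each $v_k \in \Sol_{(\varphi)}(\delta)$ (here one should choose the $w_k$ linearly independent, or argue on a basis of $U_\xi$, to pass from the vanishing of the contraction to the vanishing of each coefficient). This identifies $\Sol_{(\xvarphi)}(\bar V_\delta \otimes U_\xi)$ with $\overline{\Sol_{(\varphi)}(\delta)} \otimes U_\xi$ inside $\bar V_\delta \otimes U_\xi$, which is the claim.

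The one point needing a little care — and the only real obstacle — is the bookkeeping around the identification $V_\delta^\ssv \simeq \bar V_\delta$ and the precise form of $D_{\xvarphi}$ on a general, non-decomposable tensor $T$; one must make sure that the ``$\bar{\phantom v}$'' on the $V_\delta$-side and the absence of any bar on the $U_\xi$-side are consistent with the conventions fixed in \eqref{eqn:PW} and in Section \ref{subsec:line} (where the character case already drops the bar on the one-dimensional factor via $\overline{\C_\chi}\simeq \C_{\chi}$ for $\chi$ unitary, extended here to $\xi \in \Irr(M/M_0)$). Once the decomposable case is settled and one checks that both sides of the asserted equality are honest subspaces of $\bar V_\delta \otimes U_\xi$ closed under the relevant structure, bilinearity of $D_{\xvarphi}$ in $T$ and the separation of points by the $z^\ssv$ finish the argument; no genuinely new idea beyond the triviality of the $\fp$-action on $U_\xi$ is required.
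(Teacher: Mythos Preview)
Your proposal is correct and follows essentially the same route as the paper: unwind $\xvarphi$ to see that $D_{\xvarphi}(T;x^\ssv\otimes z^\ssv)=\sum_k \IP{w_k}{z^\ssv}_U\, d\delta(\tau(u_x^\flat))v_k$, then use separation of points to pass between the two sides. The only cosmetic difference is that for the reverse inclusion the paper expands $T$ along a basis of $V_\delta$ rather than, as you suggest, taking the $U_\xi$-components linearly independent; your choice is arguably the cleaner one, since varying $z^\ssv$ then immediately isolates each $v_k$.
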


\begin{proof}
First observe that
the space $\Sol_{(\xvarphi)}
\left(\bar{V}_\delta \otimes U_\xi\right)$ is given by
\begin{equation*}
\Sol_{(\xvarphi)}
\left(\bar{V}_\delta \otimes U_\xi\right)
=\{T \in \bar{V}_\delta \otimes U_\xi : D_{\xvarphi}(T; S)=0 
\;\; \text{for all $S \in E^\ssv_{\eta}\otimes U^\ssv_\xi$}\}
\end{equation*}
(see \eqref{eqn:Solphi}).
For $S=\sum_i x_i^\ssv \otimes z^\ssv_i \in E^\ssv_{\eta}\otimes U^\ssv_\xi$ 
and $\varphi(\sum_i x_i^\ssv \otimes \mathbb{1}_{-(\nu+\rho)})
=\sum_i u_{x,i} \otimes  \mathbb{1}_{-(\lambda+\rho)}$,
we have 
\begin{equation*}
\xvarphi(S\otimes \mathbb{1}_{-(\nu+\rho)})
=\sum_iu_{x,i} \otimes (z^\ssv_i\otimes \mathbb{1}_{-(\lambda+\rho)}).
\end{equation*}
It then follows from \eqref{eqn:DTx} that,
for $T = \sum_{j}\bar{v}_j \otimes z_j \in \bar{V}_\delta \otimes U_\xi$,
the value $D_{\xvarphi}(T; S)$ is obtained by
\begin{equation}\label{eqn:DTS}
D_{\xvarphi}(T;S) = \sum_{i,j}
\IP{z_j}{z_i^\ssv}_{\tiny U}\;d\delta(\tau(u_{x,i}^\flat))v_j.
\end{equation}
On the other hand, recall from \eqref{eqn:Solphi2} that we have
\begin{equation}
\Sol_{(\varphi)}(\delta)=
\{v \in V_\delta : D_\varphi(v;x^\ssv) = 0 \;\; \text{for all $x^\ssv \in E^\ssv_{\eta}$}\}
\end{equation}
with
\begin{equation}\label{eqn:0716}
D_\varphi(v; x^\ssv) = D_\varphi(\bar{v} \otimes \mathbb{1}_{\chi_\triv}; x^\ssv)
= d\delta(\tau(u^\flat_x))v. 
\end{equation}
Then the inclusion 
$\overline{\Sol_{(\varphi)}(\delta)}\otimes U_\xi \subset
\Sol_{(\xvarphi)}\left(\bar{V}_\delta \otimes U_\xi\right)$
follows from \eqref{eqn:DTS} and \eqref{eqn:0716}.

To show the other inclusion,
let $\{b_1,\ldots, b_n\}$ be a basis of $V_\delta$ with
$n = \dim_\C V_\delta$. Then, as $\{\bar{b}_1,\ldots, \bar{b}_n\}$ is a basis of 
$\overline{V}_\delta$, the element 
$T = \sum_{j}\bar{v}_j \otimes z_j $ can be expressed as 
$T=\sum_{\ell}\bar{b}_\ell\otimes z'_\ell$ for some $z'_\ell \in U_\xi$.
If $T\in \Sol_{(\xvarphi)}\left(\bar{V}_\delta \otimes U_\xi\right)$,
then, for any $x^\ssv \otimes z^\ssv \in E^\ssv \otimes U_\xi^\ssv$, we have
\begin{equation*}
D_{\xvarphi}(T;x^\ssv \otimes z^\ssv ) = \sum_{\ell}
\IP{z'_\ell}{z^\ssv}_{\tiny U}\;d\delta(\tau(u_{x}^\flat))b_\ell=0,
\end{equation*}
which implies
\begin{equation*}
\IP{z'_\ell}{z^\ssv}_{\tiny U}\;d\delta(\tau(u_{x}^\flat))b_\ell=0
\quad \text{for all $\ell=1, \ldots, n$}.
\end{equation*}
As $z^\ssv \in U_\xi^\ssv$ is arbitrary, this shows that 
$d\delta(\tau(u_{x}^\flat))b_\ell=0$ for all $\ell$.
Moreover, since $x^\ssv \in E^\ssv_\eta$ is also arbitrary,
we obtain $\bar{b}_\ell \in \overline{\Sol_{(\varphi)}(\delta)}$ for all 
$\ell$. Therefore, for any $x^\ssv \in E^\ssv_{\eta}$, we have
$d\delta(\tau(u_{x}^\flat))v_j=0$ for all $j$, that is,
$T \in \overline{\Sol_{(\varphi)}(\delta)}\otimes U_\xi$.
This completes the proof.
\end{proof}

Now  \eqref{eqn:0715} and Lemma \ref{lem:Solxi} conclude the following
theorem.

\begin{thm}
[PW type formula for the LV case 2]
\label{thm:Ksol}
Let $\D_{\varphi}\in \mathrm{Diff}_{G}
(I_P(\chi_\triv, \lambda), I_P(\eta,\nu))$
and $\xi \in \Rep(M/M_0)$.
Then the space
$\Cal{S}ol_{(\xvarphi; \lambda)}(\xi)_K$ 
of $K$-finite solutions to 
$\D_\varphi \otimes \mathrm{id}_{\xi}$
has a $K$-type decomposition
\begin{equation}\label{eqn:Ksol}
\Cal{S}ol_{(\xvarphi; \lambda)}(\xi)_K
\simeq \bigoplus_{\delta \in \Irr(K)}
V_\delta \otimes \mathrm{Hom}_{K\cap M}\left(
\Sol_{(\varphi)}(\delta), \xi\right).
\end{equation}
\end{thm}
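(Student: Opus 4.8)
The plan is to derive Theorem \ref{thm:Ksol} by combining the general VV-case formula \eqref{eqn:0715} with the explicit computation of the relevant solution space carried out in Lemma \ref{lem:Solxi}. The starting point is the specialization of Theorem \ref{prop:SVW} already recorded in \eqref{eqn:0715}, namely the $K$-isomorphism
\begin{equation*}
\Cal{S}ol_{(\xvarphi; \lambda)}(\xi)_K
\simeq \bigoplus_{\delta \in \Irr(K)}V_\delta \otimes
\Sol_{(\xvarphi)}\left(\bar{V}_\delta \otimes U_\xi\right)^{K\cap M},
\end{equation*}
which is available here since $\xvarphi \in \Hom_{MA}\bigl((E^\ssv_{\eta}\otimes U^\ssv_\xi)\otimes\C_{-(\nu+\rho)}, M_\fp(\xi^\ssv,-\lambda)^\fn\bigr)$ by Lemma \ref{lem:tensor} and hence $\D_{\xvarphi}=\D_\varphi\otimes\mathrm{id}_\xi$ is a genuine intertwining differential operator by Lemma \ref{lem:tensor2}. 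So the only thing left is to rewrite the multiplicity space $\Sol_{(\xvarphi)}\left(\bar{V}_\delta \otimes U_\xi\right)^{K\cap M}$ in the asserted form $\Hom_{K\cap M}(\Sol_{(\varphi)}(\delta),\xi)$.

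First I would invoke Lemma \ref{lem:Solxi}, which identifies $\Sol_{(\xvarphi)}\left(\bar{V}_\delta \otimes U_\xi\right)$ with $\overline{\Sol_{(\varphi)}(\delta)}\otimes U_\xi$ as a vector space; one should note that this identification is in fact $K\cap M$-equivariant, since both sides are $K\cap M$-stable subspaces of $\bar V_\delta\otimes U_\xi$ and the identification is just the inclusion. Taking $K\cap M$-invariants then gives
\begin{equation*}
\Sol_{(\xvarphi)}\left(\bar{V}_\delta \otimes U_\xi\right)^{K\cap M}
\simeq \bigl(\overline{\Sol_{(\varphi)}(\delta)}\otimes U_\xi\bigr)^{K\cap M}.
\end{equation*}
Next, using that $\Sol_{(\varphi)}(\delta)$ is a $K\cap M$-representation (Lemma \ref{lem:SolM}), and that for a finite-dimensional unitary representation the conjugate representation is isomorphic to the contragredient via the invariant Hermitian form, I would write $\overline{\Sol_{(\varphi)}(\delta)}\simeq \Sol_{(\varphi)}(\delta)^\vee$ as $K\cap M$-modules. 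Then the standard adjunction $(\,V^\vee\otimes U\,)^{K\cap M}=\Hom_{K\cap M}(V,U)$ — exactly the chain of isomorphisms used in the proof of Proposition \ref{prop:PWSol}, now with the character $\chi$ replaced by the possibly higher-dimensional $U_\xi$ — yields
\begin{equation*}
\bigl(\overline{\Sol_{(\varphi)}(\delta)}\otimes U_\xi\bigr)^{K\cap M}
\simeq \Hom_{K\cap M}\bigl(\Sol_{(\varphi)}(\delta),\xi\bigr).
\end{equation*}
Substituting this back into \eqref{eqn:0715} gives \eqref{eqn:Ksol} and finishes the proof.

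Honestly, there is no serious obstacle: the theorem is essentially a bookkeeping consequence of results already in place, and the author even says so ("Now \eqref{eqn:0715} and Lemma \ref{lem:Solxi} conclude the following theorem"). The one point that deserves a line of care — and the closest thing to a subtlety — is checking that the vector-space identification of Lemma \ref{lem:Solxi} respects the diagonal $K\cap M$-action, so that passing to invariants is legitimate; this is immediate from the constructions but worth stating. Everything else is the adjunction already rehearsed in the proof of Proposition \ref{prop:PWSol}.
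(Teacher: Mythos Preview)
Your proposal is correct and follows exactly the paper's approach: the paper simply cites \eqref{eqn:0715} and Lemma \ref{lem:Solxi} as concluding the theorem, with the final passage to $\Hom_{K\cap M}(\Sol_{(\varphi)}(\delta),\xi)$ via the conjugate/contragredient identification and adjunction being the same computation already displayed in the proof of Proposition \ref{prop:PWSol}.
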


\begin{rem}
Theorem \ref{thm:Ksol} can be thought of as a generalization of 
the argument given after the proof of Theorem 2.6 of  \cite{Kable11}.
\end{rem}

If $G$ is split real and 
$P=MAN$ is minimal parabolic, then $M=M/M_0$.
Therefore, for $\D_{\varphi}\in \mathrm{Diff}_{G}
(I_P(\chi_\triv, \lambda), I_P(\eta,\nu))$ and $\sigma\in \Rep(M)$, 
we have
$\D_\varphi\otimes \mathrm{id}_\sigma \in 
\mathrm{Diff}_{G}
(I_P(\sigma, \lambda), I_P(\eta \otimes \sigma,\nu))$.
In this case, as $M\subset K$, 
Theorem \ref{thm:Ksol} is given as follows.

\begin{cor}\label{cor:Frob3}
Suppose that $G$ is split real and 
$P=MAN$ is minimal parabolic.
Let $\D_{\varphi}\in \mathrm{Diff}_{G}
(I_P(\chi_\triv, \lambda), I_P(\eta,\nu))$
and  $\sigma \in \Irr(M)$.
Then the space 
$\Cal{S}ol_{(\varphi_{\sigma}; \lambda)}(\sigma)_K$
of $K$-finite solutions to 
$\D_{\varphi} \otimes \mathrm{id}_{\sigma}$
has a $K$-type decomposition
\begin{equation*}
\Cal{S}ol_{(\varphi_{\sigma}; \lambda)}(\sigma)_K
\simeq 
\bigoplus_{\delta \in \Irr(K)}
V_\delta \otimes 
\mathrm{Hom}_{M}\left(
\Sol_{(\varphi)}(\delta), \sigma\right).
\end{equation*}
\end{cor}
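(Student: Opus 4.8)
The plan is to obtain Corollary \ref{cor:Frob3} as an immediate specialization of Theorem \ref{thm:Ksol}. The only genuine content is the translation of the abstract $K\cap M$-Hom space appearing in \eqref{eqn:Ksol} into the $M$-Hom space in the statement. So first I would invoke the hypotheses: since $G$ is split real and $P = MAN$ is a minimal parabolic subgroup, the group $M = Z_K(\fa_0^{\min})$ is contained in $K$ (indeed $M \subset K$ always for the Langlands decomposition of a minimal parabolic, because $\fm_0^{\min} \subset \fk_0$ and $M$ is generated by $M_0 = \exp\fm_0^{\min}$ together with $K\cap A$-type contributions — more precisely $M$ lies in $K$ by definition of the Langlands decomposition). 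Consequently $K \cap M = M$, and the component group satisfies $M/M_0 = M$ as already noted in the paragraph preceding the corollary.

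Next I would note that, with $\chi = \chi_\triv$, Lemma \ref{lem:tensor2} (applied with $\xi = \sigma \in \Rep(M) = \Rep(M/M_0)$) gives $\D_\varphi \otimes \mathrm{id}_\sigma \in \Diff_G(I_P(\sigma,\lambda), I_P(\eta\otimes\sigma,\nu))$, so the left-hand side $\Cal{S}ol_{(\varphi_\sigma;\lambda)}(\sigma)_K$ is well-defined and is a $(\fg,K)$-module, exactly as in the setup of Theorem \ref{thm:Ksol}. Applying Theorem \ref{thm:Ksol} verbatim with this choice of $\xi = \sigma$ yields
\begin{equation*}
\Cal{S}ol_{(\varphi_\sigma;\lambda)}(\sigma)_K
\simeq \bigoplus_{\delta\in\Irr(K)}
V_\delta \otimes \Hom_{K\cap M}\!\left(\Sol_{(\varphi)}(\delta),\sigma\right).
\end{equation*}
Substituting $K\cap M = M$ from the first step turns each multiplicity space into $\Hom_M(\Sol_{(\varphi)}(\delta),\sigma)$, which is precisely the asserted decomposition. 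Here I would remark that $\Sol_{(\varphi)}(\delta)$ is an honest $M$-representation by Lemma \ref{lem:SolM} (whose hypothesis $\chi = \chi_\triv$ is in force), so the Hom space makes sense.

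The main obstacle — really the only point requiring a word of care — is checking the identification $K\cap M = M$, i.e. that $M \subseteq K$ in the minimal-parabolic split case, and that the $K\cap M$-action used in \eqref{eqn:PW} and \eqref{eqn:Ksol} coincides with the intrinsic $M$-action on $\Sol_{(\varphi)}(\delta)$ (rather than some twisted or restricted version). This is straightforward: the $K\cap M$-action in the Peter--Weyl decomposition \eqref{eqn:PW} is by construction the restriction to $K\cap M$ of the $M$-action on the fiber together with the $K$-action on $\bar V_\delta$, and under $K\cap M = M$ this is literally the $M$-action; likewise the $K\cap M$-structure on $\Sol_{(\varphi)}(\delta)$ provided by Lemma \ref{lem:SolM} is the restriction of $\delta|_M$, which is again the full $M$-action. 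Everything else is a direct substitution, so the proof is short.
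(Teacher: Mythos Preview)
Your proposal is correct and matches the paper's own approach exactly: the paper derives the corollary directly from Theorem~\ref{thm:Ksol} by noting that in the split-real, minimal-parabolic setting one has $M = M/M_0$ and $M \subset K$, hence $K\cap M = M$. Your additional remarks verifying that the $K\cap M$-action on $\Sol_{(\varphi)}(\delta)$ is the full $M$-action are accurate elaborations the paper leaves implicit.
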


\subsection{Common solution space of a system of intertwining differential operators}
\label{subsec:sol}

In this subsection we discuss the common solution space of
a system of intertwining differential operators
$\D_{\varphi_j}\in \mathrm{Diff}_{G}
(I_P(\sigma, \lambda), I_P(\eta_j,\nu_j))$
with $(\sigma, \lambda), (\eta_j,\nu_j) \in \Rep(M)_\fin \times \fa^*$ for $j=1,\ldots, n$.

For each $\D_{\varphi_j}\in \mathrm{Diff}_{G}
(I_P(\sigma, \lambda), I_P(\eta_j,\nu_j))$, we have 
$\Cal{S}ol_{(\varphi_j; \lambda)}(\sigma) \subset 
I_P(\sigma, \lambda)$.
Thus one can consider the common solution spaces
\begin{alignat*}{3}
&\Cal{S}ol_{(\varphi_1, \ldots, \varphi_n; \lambda)}(\sigma)
&&:=
\bigcap_{j=1}^n
\Cal{S}ol_{(\varphi_j; \lambda)}(\sigma)
&&\subset I_P(\sigma, \lambda),\\
&\Cal{S}ol_{(\varphi_1, \ldots, \varphi_n; \lambda)}(\sigma)_K
&&:=
\bigcap_{j=1}^n
\Cal{S}ol_{(\varphi_j; \lambda)}(\sigma)_K
&&\subset I_P(\sigma, \lambda)_K.
\end{alignat*}
For $(\delta, V_\delta) \in \Irr(K)$, we set
\begin{equation*}
\Sol_{(\varphi_1, \ldots, \varphi_n)}\left(\bar{V}_\delta \otimes W_\sigma\right)
:=
\bigcap_{j=1}^n
\Sol_{(\varphi_j)}\left(\bar{V}_\delta \otimes W_\sigma\right),
\end{equation*}
where  
$\Sol_{(\varphi_j)}\left(\bar{V}_\delta \otimes W_\sigma\right)$ is the 
subspace of $\bar{V}_\delta \otimes W_\sigma$ 
defined as in \eqref{eqn:Solphi}. It then follows from Theorem \ref{prop:SVW} that
the space $\Cal{S}ol_{(\varphi_1, \ldots, \varphi_n; \lambda)}(\sigma)_K$ 
of $K$-finite solutions to the system of differential operators 
$\D_{\varphi_1}, \ldots, \D_{\varphi_n}$ 
can be decomposed as
\begin{equation*}
\Cal{S}ol_{(\varphi_1, \ldots, \varphi_n; \lambda)}(\sigma)_K
\simeq 
\bigoplus_{\delta \in \Irr(K)}
V_\delta \otimes
\Sol_{(\varphi_1, \ldots, \varphi_n)}\left(\bar{V}_\delta \otimes W_\sigma\right)^{K\cap M}.
\end{equation*}

Now take $\sigma$ to be the trivial character $\sigma = \chi_\triv$. 
It then follows from Lemma \ref{lem:tensor2} that,
for any $\xi\in \Rep(M/M_0)$, we have
\begin{equation*}
\D_{\varphi_j}\otimes \mathrm{id}_\xi \in 
\mathrm{Diff}_{G}
(I_P(\xi, \lambda), I_P(\eta_j\otimes \xi,\nu_j))
\quad \text{for $j=1,\ldots,n$}.
\end{equation*}
Therefore, for each $\xi \in \Rep(M/M_0)$ and $\delta \in \Irr(K)$, we set
\begin{alignat}{1}
\Cal{S}ol_{((\varphi_1,\ldots, \varphi_n)_\xi; \lambda)}(\xi)_K
&:=\bigcap_{j=1}^n
\Cal{S}ol_{((\varphi_j)_\xi; \lambda)}(\xi)_K,\nonumber\\
\Sol_{(\varphi_1,\ldots, \varphi_n)}(\delta)
&:=\bigcap_{i=1}^n \Sol_{(\varphi_j)}(\delta), \label{eqn:SysSol}
\end{alignat}
where $\Sol_{(\varphi_j)}(\delta)$ is the subspace 
of $V_\delta$ defined as in \eqref{eqn:Solphi2}.
It then follows from Theorem \ref{thm:Ksol} that, 
for $\xi \in \Rep(M/M_0)$, there exists a
$K$-isomorphism
\begin{equation*}
\Cal{S}ol_{((\varphi_1,\ldots, \varphi_n)_\xi; \lambda)}(\xi)_K
\simeq \bigoplus_{\delta \in \Irr(K)}
V_\delta \otimes \mathrm{Hom}_{K \cap M}\left(
\Sol_{(\varphi_1,\dots,\varphi_n)}(\delta), \xi\right).
\end{equation*}

\subsection{Recipe for determining the $K$-type formula
for the case $(\Cal{L}_{\chi_\triv, \lambda},\, \Cal{L}_{\chi,\nu})$}
\label{subsec:recipe1}

In this subsection, for the later applications in mind, 
we further take the targeted vector bundle $\Cal{E}_{\eta,\nu}$ 
to be a line bundle $\Cal{L}_{\chi,\nu}$ and 
summarize as a recipe
how to determine the $K$-type formula of 
the solution space of the
intertwining differential operator $\Cal{D}$ from 
$\Cal{L}_{\chi_\triv, \lambda}$ to $\Cal{L}_{\chi,\nu}$.

To the end, we first observe that in this case,
by the duality theorem and a general fact 
on the space of homomorphisms between generalized Verma modules
(\cite[Thm.~1.1]{Lepowsky76}),
we have $\dim_\C\Diff_G(I_P(\chi_\text{triv}, \lambda), I_P(\chi,\nu)) \leq 1$
for any character $\chi$ of $M$ and $\lambda, \nu \in\fa^*$.
Further, 
as any map $\varphi \in \Hom_P
\left(\C_{\chi^{-1}, -\nu}, M_\fp\left(\chi_\triv, -\lambda\right)  \right)$
is of the form
$\varphi(\mathbb{1}_{\chi^{-1}}\otimes \mathbb{1}_{-(\nu+\rho)})
= u \otimes (\mathbb{1}_{\chi_\triv} \otimes \mathbb{1}_{-(\lambda+\rho)})$
with $u \in \Cal{U}(\bar \fn)$,
the differential operator $\D \in \Diff_G(I_P(\chi_\triv, \lambda), I_P(\chi,\nu))$
is given by $\D = R(u)$ for some $u \in \Cal{U}(\bar\fn)$.
We then write $\D_u$ for the differential operator
on $I_P(\chi_\triv,\lambda)$ such that $\D_u = R(u)$.

To simplify the notation we write
$\Cal{S}ol_{(u;\lambda)}(\xi)$ 
for the solution space of $\D_u\otimes \mathrm{id}_{\xi}$.
We also write  
\begin{equation}\label{eqn:Sol}
\Sol_{(u)}(\delta)= \{v \in V_\delta :
d\delta(\tau( u^\flat) )v = 0 \}.
\end{equation}
With the notation the $K$-type decomposition \eqref{eqn:Ksol} is given by
\begin{equation}\label{eqn:Ksol2}
\Cal{S}ol_{(u; \lambda)}(\xi)_K
\simeq \bigoplus_{\delta \in \Irr(K)}
V_\delta \otimes \mathrm{Hom}_{K\cap M}\left(
\Sol_{(u)}(\delta), \xi\right).
\end{equation}
(See Theorem \ref{thm:intro1}.)

Let $\Irr(M/M_0)$ denote the set of equivalence classes of 
irreducible representations of $M/M_0$.
The aim of the recipe is to determine the representations
$\delta \in \Irr(K)$ and $\xi\in \Irr(M/M_0)$ such that
$\Hom_{K\cap M}(\Sol_{(u)}(\delta), \xi)\neq \{0\}$.
There are five steps in the recipe.

\vskip 0.1in
\noindent
\textsf{Recipe for determining the $K$-type decomposition}. 
Let $\D_u \in \Diff_G(I_P(\chi_\triv,\lambda), I_P(\chi,\nu))$.
The $K$-type formula for the solution space 
$\Cal{S}ol_{(u;\lambda)}(\xi)$ of $\D_{u}\otimes \mathrm{id}_\xi$ 
can be obtained as follows.
\vskip 0.1in

\noindent
\textsf{Step S1:}
Determine $\tau(u^\flat) \in \Cal{U}(\fk_0)$ for $u \in \Cal{U}(\bar\fn)$.
\vskip 0.2in

\noindent
\textsf{Step S2:}
Choose a realization of $\delta \in \Irr(K)$ to solve the equation
$d\delta(\tau(u^\flat) )v = 0$ (see \eqref{eqn:Sol}).
Find the explicit formula of the operator $d\delta(\tau(u^\flat)) \in \text{End}(V_\delta)$ 
if necessary. 
\vskip 0.2in

\noindent
\textsf{Step S3:}
Solve the equation $d\delta(\tau(u^\flat) )v = 0$
in the realization chosen in Step S2
and determine 
$\delta \in \Irr(K)$ such that $\Sol_{(u)}(\delta)\neq \{0\}$.
\vskip 0.2in

\noindent
\textsf{Step S4:}
For  $\delta \in \Irr(K)$ with $\Sol_{(u)}(\delta)\neq \{0\}$,
determine the $K\cap M$-representation on 
$\Sol_{(u)}(\delta)$.
\vskip 0.2in

\noindent
\textsf{Step S5:}
Given $\xi \in \Irr(M/M_0)$,
determine $\delta \in \Irr(K)$ with $\Sol_{(u)}(\delta)\neq \{0\}$ such that
\begin{equation*}
\mathrm{Hom}_{K\cap M}\left(
\Sol_{(u)}(\delta),\, \xi\right) \neq \{0\}.
\end{equation*}
We remark that through the recipe one can also determine
$\dim_\C \mathrm{Hom}_{K\cap M}\left(
\Sol_{(u)}(\delta),\, \xi\right)$.
(See Propositions \ref{prop:Krep-rho} and \ref{prop:Krep-wrho}.)
\vskip 0.2in

Given characters $\chi_j$ of $M$ and $\nu_j \in \fa^*$ for $j=1,\ldots, n$,
take differential operators
$\Cal{D}_{u_j} \in \Diff_G(I_P(\chi_\triv, \lambda), I_P(\chi_j, \nu_j))$.
Then the $K$-type formula for the common solution space 
$\Cal{S}ol_{(u_1, \ldots, u_n; \lambda)}(\xi)$
of the operators $\Cal{D}_{u_1}\otimes\mathrm{id}_\xi, \ldots, 
\Cal{D}_{u_n}\otimes \mathrm{id}_\xi$ 
can be achieved as follows.
\vskip 0.1in

\noindent
\textsf{Step CS1: }
Perform Steps S1--S3 for each $\D_{u_j}\otimes \mathrm{id}_\xi$.
\vskip 0.2in

\noindent
\textsf{Step CS2:}
Determine $\delta \in \Irr(K)$ such that
$\Sol_{(u_1,\ldots, u_n)}(\delta)\neq \{0\}$.
\vskip 0.2in

\noindent
The rest of the steps are the same as Steps S4 and S5 by replacing 
$\Sol_{(u)}(\delta)$ with $\Sol_{(u_1,\ldots, u_n)}(\delta)$.
\vskip 0.1in

In Sections \ref{sec:rho} and \ref{sec:wrho}, we determine the $K$-type decompositions
for certain intertwining differential operators in accordance with the recipe
for $G = \widetilde{SL}(3,\R)$.

\section{$(\fg, B)$-homomorphisms between Verma modules}\label{sec:Verma}

The duality theorem (Theorem \ref{thm:duality}) shows that 
the classification and construction of intertwining differential operators
between degenerate principal series representations are equivalent to those of 
$(\fg, P)$-homomorphisms between generalized Verma modules.
In this section, for later convenience, we discuss
$(\fg, B)$-homomorphisms between (full) Verma modules,
where $B$ is a minimal parabolic subgroup of a split real simple Lie group $G$.

\subsection{Notation}
\label{subsec:Verma0}

Let $G$ be a split real simple Lie group and 
fix a minimal parabolic subgroup $B=MAN$.
In this case, as $M$ is discrete, we have $\Irr(M)=\Irr(M)_\fin=\Irr(M/M_0)$.
We denote by $\fg, \fb, \fa$, and $\fn$ the complexifications of
the Lie algebras of $G, B, A$, and $N$, respectively.
Then $\fb = \fa \oplus \fn$ is a Borel subalgebra of $\fg$.
For $(\sigma, \lambda) \in \Irr(M) \times \fa^*$, we write 
\begin{equation*}
M(\sigma,\lambda) =
M_{\fb}(\sigma,\lambda)
:=\Cal{U}(\fg)\otimes_{\Cal{U}(\fb)}(\sigma \otimes (\lambda-\rho)),
\end{equation*}
a (full) Verma module.
By letting $B$ act diagonally,
we regard $M(\sigma,\lambda)$ as a $(\fg, B)$-module. 
When $M(\sigma,\lambda)$ is regarded just as a $\fg$-module, we write
$M(\sigma, \lambda)\vert_{\fg}$. 
When $\sigma$ is the trivial character $\chi_\triv$,
we also write 
\begin{equation*}
M(\lambda) = M(\chi_\triv, \lambda)\vert_{\fg}.
\end{equation*}

Let $\Irr(M)_{\mathrm{char}}$ denote the set of characters of $M$.
For any $\chi \in \Irr(M)_{\mathrm{char}}$, we have
$M(\chi, \lambda)\vert_{\fg}=M(\chi_\triv, \lambda)\vert_{\fg}$.
Thus $M(\chi, \lambda)\vert_{\fg}=M(\lambda)$
for $\chi \in \Irr(M)_{\mathrm{char}}$.

For $(\sigma, \lambda) \in \Irr(M) \times \fa^*$, we write 
\begin{equation*}
I(\sigma, \lambda) = I_{B}(\sigma, \lambda),
\end{equation*}
where $I_{B}(\sigma, \lambda)$ is 
the principal series representation
as in Section \ref{sec:IDO}.

By Corollary \ref{cor:Frob3},
we are initially interested in 
$\mathcal{D} \in \Diff_{G}\left(I(\chi_\triv, \lambda_1), I(\sigma, \lambda_2)\right)$
for $\sigma \in \Irr(M)$.
In Sections \ref{subsec:Verma1} and \ref{subsec:Verma2} below
we shall discuss the classification and constructions of 
the differential operators $\mathcal{D}$
in terms of homomorphisms between Verma modules.

\subsection{Classification of homomorphisms between Verma modules}
\label{subsec:Verma1}

We start with the classification of the parameters
$(\sigma, \lambda_1, \lambda_2) 
\in \Irr(M) \times (\fa^*)^2$
such that 
$\Diff_{G}\left(I(\chi_\triv, \lambda_1), I(\sigma, \lambda_2)\right)\neq \{0\}$.
By the duality theorem, it is equivalent to
classifying $(\sigma, \lambda_1, \lambda_2) 
\in \Irr(M) \times (\fa^*)^2$ such that
$\Hom_{\fg, B}
\left(M(\sigma^\ssv, -\lambda_2), 
M(\chi_\triv, -\lambda_1)\right) \neq \{0\}$,
as
\begin{equation}\label{eqn:duality3}
\Diff_{G}\left(I(\chi_\triv, \lambda_1), I(\sigma, \lambda_2)\right)
\simeq
\Hom_{\fg, B}
\left(M(\sigma^\ssv, -\lambda_2), 
M(\chi_\triv, -\lambda_1)\right).
\end{equation}

Lemma \ref{lem:char} below shows that it suffices to consider the case 
$\sigma = \chi \in \Irr(M)_{\mathrm{char}}$.
For simplicity we set $\Irr(M)' = \Irr(M)\setminus \Irr(M)_{\mathrm{char}}$.

\begin{lem}\label{lem:char}
For $(\chi, \lambda, \nu) 
\in \Irr(M)_{\mathrm{char}} \times (\fa^*)^2$,
we have
\begin{equation*}
\Hom_{\fg, B}\left(M(\sigma,\lambda), M(\chi,\nu) \right)
= \{0\} \quad \text{for any $\sigma \in \Irr(M)'$}.
\end{equation*}
\end{lem}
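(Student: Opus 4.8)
The plan is to compare the two Verma modules as $\fg$-modules and exploit the fact that, as a $\fg$-module, $M(\sigma,\lambda)\vert_{\fg}$ depends only on $\lambda$, not on $\sigma$ — this is precisely the content of the observation $M(\chi,\lambda)\vert_{\fg}=M(\lambda)$ recorded in Section \ref{subsec:Verma0}, which extends to arbitrary $\sigma\in\Irr(M)$ since $M$ is discrete and $\sigma\otimes(\lambda-\rho)$ restricted to $\fb=\fa\oplus\fn$ sees only the $\fa$-weight $\lambda-\rho$. So the first step is to record that any $(\fg,B)$-homomorphism $\Psi\colon M(\sigma,\lambda)\to M(\chi,\nu)$ is in particular a $\fg$-homomorphism $M(\lambda)\vert_{\fg}\to M(\nu)\vert_{\fg}$, and that the $B$-equivariance constrains how $\Psi$ transforms under the component group $M$.

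\textbf{Key steps.}
First I would use the structure of $M(\sigma,\lambda)$ as a $(\fg,B)$-module: the space of $\fn$-invariants (the singular vectors) $M(\sigma,\lambda)^{\fn}$ is, by PBW applied to $\fg=\bar\fn\oplus\fb$, a subspace of $\Cal{U}(\bar\fn)\otimes(\sigma\otimes(\lambda-\rho))$, and $M$ acts on it diagonally through $\Ad$ on $\Cal{U}(\bar\fn)$ and through $\sigma$ on the fiber. A $(\fg,B)$-homomorphism $M(\sigma^{\text{prime}},\lambda)\to M(\chi,\nu)$ is determined by the image of the canonical generator $\mathbb{1}_{\sigma^{\text{prime}}}\otimes\mathbb{1}_{\lambda-\rho}$, which must land in $M(\chi,\nu)^{\fn}$ and must be an $M$-eigenvector transforming by $\sigma^{\text{prime}}$ on one side but sitting inside a module whose $\fn$-invariants carry only $M$-types of the form $\chi\otimes(\text{restriction of }\Ad\text{ on }\Cal{U}(\bar\fn))$. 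Since $\chi$ is a \emph{character} and $M$ is discrete, the relevant $\Ad$-action of $M$ on the weight spaces of $\Cal{U}(\bar\fn)$ factors through a finite abelian quotient (or at any rate every $M$-type occurring in $M(\chi,\nu)^{\fn}$ is one-dimensional, because $\Ad(M)$ acts on each restricted-root space by a scalar as $M$ centralizes $\fa$ and normalizes each $\fn$-root space in the split minimal-parabolic situation). Hence $M(\chi,\nu)^{\fn}$ contains no copy of an irreducible $\sigma\in\Irr(M)^{\text{prime}}$, which by hypothesis is not a character, so $\Hom_{M}(\sigma, M(\chi,\nu)^{\fn})=\{0\}$, and therefore $\Hom_{\fg,B}(M(\sigma,\lambda),M(\chi,\nu))$, which injects into this Hom space via evaluation at the generator, vanishes.

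\textbf{Main obstacle.}
The step I expect to require the most care is the claim that every $M$-isotypic component of $M(\chi,\nu)^{\fn}$ is a character of $M$ — equivalently, that the $\Ad(M)$-action on the relevant weight spaces of $\Cal{U}(\bar\fn)$, twisted by $\chi$, decomposes into one-dimensional pieces. In the split case each restricted-root space $\fg_{-\alpha}$ is one-dimensional and $\Ad(m)$ acts on it by a sign character of $M$; a PBW monomial basis of $\Cal{U}(\bar\fn)$ then consists of $\Ad(M)$-eigenvectors, so every $M$-type appearing is a character. Writing this out cleanly (choosing the monomial basis, checking $\Ad(m)$ acts diagonally on it, and matching weights) is the heart of the argument; the rest is formal. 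One should also note that $M(\chi,\nu)^{\fn}$ may be bigger than just the highest weight line when $\nu$ is special, but this does not matter — all its $M$-types are still characters, so it still cannot contain a non-character $\sigma$.
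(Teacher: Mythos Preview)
Your approach is correct but genuinely different from the paper's. You argue that, in the split case, each (restricted) root space is one-dimensional, so $\Ad(M)$ acts on it by a character; a PBW monomial basis of $\Cal{U}(\bar\fn)$ then consists of $\Ad(M)$-eigenvectors, whence $M(\chi,\nu)\simeq \Cal{U}(\bar\fn)\otimes\C_\chi$ decomposes as an $M$-module into one-dimensional pieces. Consequently no irreducible non-character $\sigma$ can embed $M$-equivariantly into $M(\chi,\nu)^{\fn}$, and the Hom space vanishes. The paper instead forgets the $M$-action, writes $M(\sigma,\lambda)\vert_\fg$ as a direct sum of $n=\dim\sigma$ copies of $M(\lambda)$, and invokes the multiplicity-one theorem $\dim_\C\Hom_\fg(M(\lambda),M(\nu))\le 1$ (cf.\ \cite[Thm.~4.2]{Hum08}) to conclude that the image of the generating space $1\otimes\sigma$ lies in a single line $\C u_0\otimes\mathbb{1}_{\nu-\rho}$; this contradicts $M$-equivariance since $\sigma$ is irreducible of dimension $>1$.

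Your route is more elementary---it avoids the BGG--Verma multiplicity-one input entirely and instead exploits the split hypothesis directly through the $\Ad(M)$-module structure of $\Cal{U}(\bar\fn)$. The paper's route trades this for a single citation to a standard Verma-module fact and does not need to analyze the $M$-action on root spaces. One small wording issue: you speak of ``the canonical generator'' and of an ``$M$-eigenvector transforming by $\sigma$''; since $\sigma$ is not one-dimensional there is no single generator, only the generating subspace $1\otimes(\sigma\otimes\mathbb{1}_{\lambda-\rho})$, and its image is an $M$-quotient of $\sigma$ rather than an eigenvector. This is cosmetic---your argument goes through once you say ``the image of the generating subspace is an $M$-subrepresentation of $M(\chi,\nu)^{\fn}$ and a quotient of the irreducible $\sigma$, hence zero.''
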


\begin{proof}
Assume that there exists a quadruple
$(\sigma_0, \chi_0, \lambda_0, \nu_0) \in \Irr(M)'\times
\Irr(M)_{\mathrm{char}} \times (\fa^*)^2$ 
such that $\Hom_{\fg, B}\left(M(\sigma_0, \lambda_0),M(\chi_0, \nu_0)\right) \neq \{0\}$.
As $\sigma_0 \in \Irr(M)'$, we write 
$\sigma_0 = \text{span}_\C\{v_1,\ldots, v_n\}$ for some $n > 1$.
Then since $\fb = \fa \oplus \fn$ acts on $\sigma_0$ trivially, we have
$M(\sigma_0, \lambda_0)\vert_{\fg} = \bigoplus_{j=1}^nM(\lambda_0)_j$,
where $M(\lambda_0)_j \simeq M(\lambda_0)$ is the $\Cal{U}(\fg)$-module induced 
from $\C v_j \otimes \mathbb{1}_{\lambda_0-\rho}$.
Since $M(\chi_0,\nu_0)\vert_\fg \simeq M(\nu_0)$, we have
\begin{equation}\label{eqn:Verma32}
\{0\} 
\neq \Hom_{\fg}\left(M(\sigma_0, \lambda_0)\vert_\fg, M(\chi_0, \nu_0)\vert_\fg \right)
=\bigoplus_{j=1}^n\Hom_{\fg}\left(M(\lambda_0)_j, M(\nu_0) \right).
\end{equation}

Let $\varphi$ be a non-zero homomorphism in 
$\Hom_{\fg, B}\left(M(\sigma_0, \lambda_0),M(\chi_0, \nu_0)\right)$.
Since $M(\nu_0)$ has at most one copy of $M(\lambda_0)$ 
(see, for instance,  \cite[Thm.~4.2]{Hum08}), 
identity \eqref{eqn:Verma32} implies that there exists 
$\ell \in \{1,\ldots, n\}$ and $u_0 \in \Cal{U}(\fg)$ such that 
$\varphi(v_\ell \otimes \mathbb{1}_{\lambda_0-\rho}) = u_0 \otimes \mathbb{1}_{\nu_0-\rho}$
and $\varphi(v_j \otimes \mathbb{1}_{\lambda_0-\rho}) = 0$ for $j \neq \ell$. Thus
we have
$\varphi(\sigma_0 \otimes \mathbb{1}_{\lambda_0-\rho}) = \C u_0 \otimes 
\mathbb{1}_{\nu_0-\rho}$.
This contradicts the assumption that $\varphi$ is an $MA$-homomorphism, that is,
$\dim_\C \varphi(\sigma_0 \otimes \mathbb{1}_{\lambda_0-\rho} )
= \dim_\C\sigma_0 =n> 1$. Now the lemma follows.
\end{proof}

It follows from \eqref{eqn:duality3} 
with $\sigma = \chi \in \Irr(M)_{\mathrm{char}}$
that
\begin{align*}
\Diff_{G}\left(I(\chi_\triv, \lambda_1), I(\chi, \lambda_2)\right)
&\simeq
\Hom_{\fg, B}
\left(M(\chi^{-1}, -\lambda_2), 
M(\chi_\triv, -\lambda_1)\right)\\
&\subset 
\Hom_{\fg}
\left(M(\chi^{-1}, -\lambda_2)\vert_{\fg}, 
M(\chi_\triv, -\lambda_1)\vert_{\fg}\right)\\
&=
\Hom_{\fg}
\left(M( -\lambda_2), 
M(-\lambda_1)\right).
\end{align*}
Then we next briefly recall a well-known fact on 
the classification of parameters $(\lambda, \nu) \in \fa^*$ 
for which $\Hom_{\fg}\left(M(\lambda), M(\nu)\right) \neq \{0\}$.
For the details, see, for instance, \cite{BGG71, BGG75}, 
\cite[Chap.~7]{Dix96}, \cite[Chap.~5]{Hum08}, and \cite{Verma68}.

For the rest of this subsection and Section \ref{subsec:Verma2},
we assume that 
$\fg$ is a complex simple Lie algebra and we fix a Cartan subalgebra $\fh$ of $\fg$.
We also fix an inner product $\IP{\cdot}{\cdot}$ on $\fh^*$.
Let $\Delta$ denote the set of roots of $\fg$ with respect to $\fh$.
Choose a positive system $\gD^+$ for $\gD$ and write $\Pi$ for the 
set of simple roots for $\gD^+$. We write $\fb = \fh \oplus \fn$ for
the Borel subalgebra corresponding to $\gD^+$.
For $\ga \in \gD$ and $\lambda \in \fh^*$, 
we write $s_\ga(\lambda) = \lambda - \IP{\lambda}{\ga^\ssv}\ga$
with $\ga^\ssv =\frac{2}{\IP{\ga}{\ga}}\ga$,
the coroot of $\ga$. 

We first recall from the literature the notion of a \emph{link} between 
weights.

\begin{defn}
[Bernstein--Gelfand--Gelfand]
\label{def:Link}
Let $\gl, \lambda \in \fh^*$ and $\gb_1, \ldots, \gb_t \in \gD^+$. Set $\lambda_0 = \lambda$
and $\lambda_i = s_{\gb_i} \cdots s_{\gb_1}(\lambda)$ for $1 \leq i \leq t$.
We say that the sequence $(\gb_1, \ldots, \gb_t)$ \emph{links} $\lambda$ to $\gl$ if 
the following two conditions are satisfied:
\begin{enumerate}
\item[(1)] $\lambda_t = \gl$;
\item[(2)] $\IP{\lambda_{i-1}}{\gb_i^{\ssv}} \in \Z_{\geq 0}$ for $1\leq i \leq t$.
\end{enumerate}
\end{defn}

It is well known that the Verma module $M(\gl)$ has a unique irreducible quotient,
to be denoted by $L(\gl)$,
and also that $\dim_\C\Hom_{\fg}(M(\nu),M(\lambda)) \leq 1$.
The following celebrated result of BGG--Verma shows when 
$\dim_\C\Hom_{\fg}(M(\nu),M(\lambda)) = 1$.

\begin{thm}
[BGG--Verma]
\label{thm:BGGV}
The following three conditions on $\gl, \lambda \in \fh^*$ are equivalent:
\begin{enumerate}
\item[(i)] $\dim_\C\Hom_{\fg}(M(\nu),M(\lambda)) =1;$
\item[(ii)] $L(\gl)$ is a composition factor of $M(\lambda);$
\item[(iii)] there exists a sequence $(\gb_1, \ldots, \gb_t)$ with $\gb_i \in \gD^+$
that links $\lambda$ to $\gl$.
\end{enumerate}
\end{thm}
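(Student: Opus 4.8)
The plan is to prove the cycle of implications (iii) $\Rightarrow$ (i) $\Rightarrow$ (ii) $\Rightarrow$ (iii), working throughout in the normalization $M(\lambda) = \Cal{U}(\fg)\otimes_{\Cal{U}(\fb)}\C_{\lambda-\rho}$ of Section~\ref{subsec:Verma0}, under which the usual dot action $w\cdot\mu = w(\mu+\rho)-\rho$ on highest weights becomes the ordinary action $\lambda\mapsto w\lambda$ on the parameter, so that the notion of link in Definition~\ref{def:Link} is the classical one in these shifted coordinates. I would use two standard facts freely: Verma's theorem, that every nonzero element of $\Hom_\fg(M(\nu),M(\lambda))$ is injective and that $\dim_\C\Hom_\fg(M(\nu),M(\lambda))\leq 1$; and the Harish--Chandra linkage principle, that a composition factor $L(\nu)$ of $M(\lambda)$ forces $\nu\leq\lambda$ and $\nu+\rho\in W_{[\lambda]}(\lambda+\rho)$, where $W_{[\lambda]}$ is the integral Weyl group of $\lambda$. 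Then (i) $\Rightarrow$ (ii) is immediate: a nonzero map $M(\nu)\to M(\lambda)$ is injective, so $M(\lambda)$ contains a submodule isomorphic to $M(\nu)$, and $L(\nu)$, being the head of $M(\nu)$, is a composition factor of that submodule and hence of $M(\lambda)$. For (iii) $\Rightarrow$ (i) it suffices to produce a nonzero homomorphism $M(\nu)\to M(\lambda)$ (which is then injective and spans $\Hom_\fg$ by Verma), and composing homomorphisms along the link $(\gb_1,\dots,\gb_t)$ reduces this to the single reflection step: if $\IP{\lambda}{\gb^\ssv}=m\in\Z_{\geq 0}$ for $\gb\in\gD^+$, then $\Hom_\fg(M(s_\gb\lambda),M(\lambda))\neq\{0\}$. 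For $m=0$ this is trivial; for $m>0$ with $\gb$ simple it is the classical $\f{sl}(2)$ computation showing that $f_\gb^{\,m}$ applied to the highest weight vector of $M(\lambda)$ is a nonzero singular vector of weight $s_\gb\lambda-\rho$; and a general positive root reduces to the simple case by conjugating with a Weyl group element carrying $\gb$ to a simple root.

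The substantive step is (ii) $\Rightarrow$ (iii), which I would prove by induction on the height $\Ht(\lambda-\nu)$ of $\lambda-\nu$ written in the simple roots. If $\nu=\lambda$ the empty link works. Otherwise $\nu<\lambda$ by the linkage principle, and the crux is to find $\gb\in\gD^+$ with $\IP{\lambda}{\gb^\ssv}\in\Z_{>0}$ (so $s_\gb\lambda<\lambda$ automatically) for which $L(\nu)$ remains a composition factor of $M(s_\gb\lambda)$: then $\nu\leq s_\gb\lambda$ by linkage applied to $M(s_\gb\lambda)$, so $\Ht(s_\gb\lambda-\nu)<\Ht(\lambda-\nu)$, the inductive hypothesis produces a link from $s_\gb\lambda$ to $\nu$, and prepending $\gb$ yields a link from $\lambda$ to $\nu$. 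To produce such a $\gb$ I would invoke Jantzen's sum formula for the Jantzen filtration $M(\lambda)=M(\lambda)^0\supset M(\lambda)^1\supset\cdots$,
\[
\sum_{i>0}\mathrm{ch}\,M(\lambda)^i \;=\; \sum_{\gb\in\gD^+,\,\IP{\lambda}{\gb^\ssv}\in\Z_{>0}}\mathrm{ch}\,M(s_\gb\lambda).
\]
Since $M(\lambda)^1$ is the radical of the contravariant form on $M(\lambda)$, which coincides with the unique maximal submodule, one has $M(\lambda)/M(\lambda)^1\cong L(\lambda)$; as $\nu\neq\lambda$, the composition factor $L(\nu)$ of $M(\lambda)$ must occur in $M(\lambda)^1$, hence in the left-hand side, hence as a composition factor of $M(s_\gb\lambda)$ for some $\gb$ occurring on the right, and any such $\gb$ completes the induction.

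I expect the main obstacle to be the Jantzen sum formula itself, which is not elementary: it rests ultimately on the Shapovalov determinant formula for the contravariant form on $M(\lambda)$. For this paper I would cite it (for instance \cite[Chap.~5]{Hum08}) rather than reproduce the determinant computation, so that what remains is only the elementary bookkeeping above. An alternative route to (ii) $\Rightarrow$ (iii) is the original Bernstein--Gelfand--Gelfand argument via projection and translation functors, reducing to rank-one and rank-two integral blocks; this is no shorter and uses character-theoretic input of comparable depth.
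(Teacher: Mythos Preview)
The paper does not prove this theorem; it is quoted from the classical literature (Verma, Bernstein--Gelfand--Gelfand) with references to \cite{BGG71, BGG75}, \cite[Chap.~7]{Dix96}, \cite[Chap.~5]{Hum08}, and \cite{Verma68}, so there is no in-paper argument to compare against. Your cycle (iii) $\Rightarrow$ (i) $\Rightarrow$ (ii) $\Rightarrow$ (iii), with the Jantzen sum formula carrying the hard implication (ii) $\Rightarrow$ (iii), is one of the standard modern proofs and is essentially what one finds in \cite[Chap.~5]{Hum08}.

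There is, however, one genuine gap in your sketch of (iii) $\Rightarrow$ (i). For a non-simple positive root $\gb$ with $\IP{\lambda}{\gb^\ssv}=m\in\Z_{>0}$, the claim that the embedding $M(s_\gb\lambda)\hookrightarrow M(\lambda)$ ``reduces to the simple case by conjugating with a Weyl group element carrying $\gb$ to a simple root'' does not work as stated. Choosing $w$ with $w\gb=\ga$ simple gives $\IP{w\lambda}{\cga}=m$ and hence $M(s_\ga w\lambda)\hookrightarrow M(w\lambda)$, i.e.\ $M(ws_\gb\lambda)\hookrightarrow M(w\lambda)$; but there is no elementary relation between $M(\mu)$ and $M(w\mu)$ that lets you undo the twist by $w$ (Verma modules in the same $W$-orbit are generally non-isomorphic). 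The standard arguments here are either (a) an inductive case analysis using only simple reflections, as in \cite[\S4]{Hum08}, or (b) the Shapovalov determinant formula, which locates the required singular vector directly for arbitrary $\gb\in\gD^+$. Since you already invoke Jantzen---and hence implicitly Shapovalov---for (ii) $\Rightarrow$ (iii), route (b) costs you nothing extra. Either repair is routine; you should just not claim the Weyl-conjugation shortcut.
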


To determine $\chi \in \Irr(M)_{\mathrm{char}}$ such that 
$\Hom_{\fg, B}
\left(M(\chi^{-1}, -\nu), 
M(\chi_\triv, -\lambda)\right) \neq \{0\}$ in the setting of Section \ref{subsec:Verma0},
observe that 
the homomorphism $\varphi \in \Hom_\fg(M(-\nu),M(-\lambda))$ is 
determined by the image $\varphi(1\otimes \mathbb{1}_{-\nu-\rho})$
of highest weight vector $1\otimes \mathbb{1}_{-\nu-\rho}$ of 
$M(-\nu)$ in $M(-\lambda)$.
Let $u_0\in\Cal{U}(\bar{\fn})$ such that
$\varphi(1\otimes \mathbb{1}_{-\nu-\rho}) 
=u_0 \otimes \mathbb{1}_{-\lambda-\rho}$.
Since each root space is a one-dimensional representation of $M$,
the group $M$ acts on $\C u_0$ as a character $\chi_0$.
This is the character we look for.
In the next subsection we then discuss how to find such $u_0$
as a construction of a homomorphism between Verma modules.

\subsection{Construction of homomorphisms 
between Verma modules}
\label{subsec:Verma2}

As in \eqref{eqn:n-inv}, we set
\begin{equation*}
M(\lambda)^\fn:=\{u \otimes \mathbb{1}_{\lambda -\rho}\in 
M(\lambda) : X \cdot (u\otimes \mathbb{1}_{\lambda-\rho}) = 0 
\text{ for all $X \in \fn$}\}.
\end{equation*}
As $\fn$ is generated by the root vectors $X_\ga$ for $\ga \in \Pi$, it follows that
\begin{equation*}
M(\lambda)^\fn=\{u \otimes \mathbb{1}_{\lambda -\rho}\in 
M(\lambda) : X_\ga \cdot (u\otimes \mathbb{1}_{\lambda-\rho}) = 0 
\text{ for all $\ga \in \Pi$}\}.
\end{equation*}
The elements $u\otimes \mathbb{1}_{\lambda-\rho} \in M(\lambda)^\fn$ are called
\emph{singular vectors} of $M(\lambda)$.
As
$\Hom_{\fg}(M(\nu), M(\lambda)) \simeq \Hom_{\fh}(\C_{\nu -\rho}, M(\lambda)^\fn)$,
to construct a homomorphism $\varphi \in \Hom_{\fg}(M(\nu), M(\lambda))$,
it suffices to find a singular vector 
$u\otimes \mathbb{1}_{\lambda-\rho} \in M(\lambda)^\fn$ 
with weight $\nu-\rho$, namely, the element 
$u\otimes \mathbb{1}_{\lambda-\rho} \in M(\lambda)$ 
satisfying the following two conditions:
\begin{enumerate}
\item[(C1)] the element $u$ has weight $\nu-\lambda$;
\item[(C2)] $X_{\ga} \cdot (u\otimes \mathbb{1}_{\lambda-\rho})=0$ for all $\ga \in \Pi$.
\end{enumerate}

When $\nu$ is given by $\nu = s_{\ga}(\lambda)$ 
with $\IP{\lambda}{\cga} \in 1+\Z_{\geq 0}$ for some $\ga \in \Pi$,
the singular vector of $M(\lambda)$ with weight $\nu-\rho$ is easy to find.
For the proof of the next proposition see, for instance, \cite[Prop.\ 1.4]{Hum08}.

\begin{prop}\label{prop:map}
Given $\lambda \in \fh^*$ and $\ga \in \Pi$, 
suppose that $k:= \IP{\lambda}{\cga} \in 1+ \Z_{\geq 0}$.
Then $X_{-\ga}^k \otimes \mathbb{1}_{\lambda-\rho}$ 
is a singular vector of $M(\lambda)$ with weight $-k\ga + (\lambda-\rho)$.
Consequently, up to scalar multiple, the map $\varphi
\in \Hom_{\fg}(M(s_\ga(\lambda)), M(\lambda))$ is given by
\begin{equation*}
1\otimes \mathbb{1}_{s_\ga(\lambda)-\rho}
\mapsto X^k_{-\ga}\otimes \mathbb{1}_{\lambda-\rho}.
\end{equation*}
\end{prop}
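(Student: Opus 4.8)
The plan is to verify the two conditions (C1) and (C2) that characterize a singular vector, and then invoke the general principle (recalled just before the statement) that $\Hom_\fg(M(\nu),M(\lambda)) \simeq \Hom_\fh(\C_{\nu-\rho}, M(\lambda)^\fn)$ together with the one-dimensionality $\dim_\C\Hom_\fg(M(\nu),M(\lambda)) \le 1$ from Theorem \ref{thm:BGGV}. Condition (C1) is immediate: the element $X_{-\ga}^k$ has weight $-k\ga$ under $\fh$, so $X_{-\ga}^k \otimes \mathbb{1}_{\lambda-\rho}$ has weight $-k\ga + (\lambda-\rho)$, and since $\nu = s_\ga(\lambda) = \lambda - k\ga$ with $k = \IP{\lambda}{\cga}$, this weight equals $\nu - \rho$ as required.

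\textbf{The core computation (C2).} First I would observe that for a simple root $\gb \in \Pi$ with $\gb \ne \ga$, the root vector $X_\gb$ commutes with $X_{-\ga}$ (since $\gb - \ga$ is not a root when $\gb,\ga$ are distinct simple roots, and $X_\gb$ kills the highest weight vector $\mathbb{1}_{\lambda-\rho}$), so $X_\gb \cdot (X_{-\ga}^k \otimes \mathbb{1}_{\lambda-\rho}) = 0$ trivially. The substantive case is $\gb = \ga$. Here I would work inside the $\f{sl}(2)$-triple $\{X_\ga, H_\ga, X_{-\ga}\}$ attached to $\ga$ and use the standard commutation identity
\begin{equation*}
[X_\ga, X_{-\ga}^k] = k\, X_{-\ga}^{k-1}(H_\ga - (k-1)),
\end{equation*}
which is proved by an easy induction on $k$. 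Applying this to $\mathbb{1}_{\lambda-\rho}$, on which $X_\ga$ acts by $0$ and $H_\ga$ acts by the scalar $(\lambda-\rho)(H_\ga) = \IP{\lambda-\rho}{\cga} = \IP{\lambda}{\cga} - 1 = k - 1$, gives
\begin{equation*}
X_\ga \cdot (X_{-\ga}^k \otimes \mathbb{1}_{\lambda-\rho}) = k\, X_{-\ga}^{k-1} \otimes \big((H_\ga - (k-1))\cdot \mathbb{1}_{\lambda-\rho}\big) = k\,(k-1 - (k-1))\, X_{-\ga}^{k-1}\otimes \mathbb{1}_{\lambda-\rho} = 0.
\end{equation*}
This establishes (C2) for all simple roots, so $X_{-\ga}^k \otimes \mathbb{1}_{\lambda-\rho} \in M(\lambda)^\fn$ is a singular vector of weight $\nu - \rho$.

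\textbf{Conclusion.} The singular vector produces, via $\Hom_\fg(M(\nu),M(\lambda)) \simeq \Hom_\fh(\C_{\nu-\rho}, M(\lambda)^\fn)$, a nonzero homomorphism $\varphi \colon M(s_\ga(\lambda)) \to M(\lambda)$ sending the canonical generator $1 \otimes \mathbb{1}_{s_\ga(\lambda)-\rho}$ to $X_{-\ga}^k \otimes \mathbb{1}_{\lambda-\rho}$; since such a homomorphism is determined by the image of the generator and the Hom-space is at most one-dimensional, this $\varphi$ is unique up to scalar. I do not anticipate a genuine obstacle here — the only point requiring care is the bookkeeping of the $\rho$-shift, namely checking that $H_\ga$ acts on $\mathbb{1}_{\lambda-\rho}$ by $k-1$ rather than $k$, which is exactly what makes the bracket term vanish; the hypothesis $k \in 1 + \Z_{\ge 0}$ (equivalently $k \ge 1$) is what guarantees $X_{-\ga}^k$ is a nontrivial operator and $\nu - \rho$ is a genuine weight of $M(\lambda)$.
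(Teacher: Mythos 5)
Your proof is correct and is essentially the standard argument found in the reference the paper cites (\cite[Prop.~1.4]{Hum08}); the paper itself gives no proof. The key $\f{sl}(2)$-identity $[X_\ga, X_{-\ga}^k] = k\,X_{-\ga}^{k-1}(H_\ga - (k-1))$, the computation $(\lambda-\rho)(H_\ga)=k-1$, and the reduction of (C2) for $\gb\neq\ga$ to the fact that $\gb-\ga$ is not a root are all exactly the steps in Humphreys, and your bookkeeping of the $\rho$-shift is accurate.
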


\subsection{Recipe of classification and construction of
$(\fg, B)$-homomorphisms between Verma modules}
\label{subsec:recipe2}

Here, for the sake of convenience, 
we summarize the classification and construction of the
$(\fg,B)$-homomorphism from $M(\chi^{-1},-\nu)$ to $M(\chi_\triv, -\lambda)$
for fixed $\lambda \in \fa^*$.
Via the duality theorem,
these are equivalent to those of 
the intertwining differential operator $\D_{u}\in 
\Diff_G\left(I(\chi_\triv, \lambda), I(\chi, \nu)\right)$.
In this subsection we use the notation in Section \ref{subsec:Verma0}.
\vskip 0.1in

\noindent
\textsf{Recipe for $(\fg, B)$-homomorphisms between Verma modules}.
Fix $\lambda \in \fa^*$.
\vskip 0.1in

\noindent
\textsf{Step H1:}
Classify $\nu \in \fa^*$ with $\nu \neq \lambda$ such that 
\begin{equation*}
\Hom_{\fg}\left(M(-\nu), M(-\lambda)\right)\neq \{0\}.
\end{equation*}
We remark that, by the BGG--Verma theorem (Theorem \ref{thm:BGGV}),
we have 
\begin{equation*}
\#\{\nu\in \fa^*: \Hom_{\fg}(M(-\nu),M(-\lambda)) \neq \{0\} \}<\infty,
\end{equation*}
where $\#S$ denotes the cardinality of a given set $S$.
\vskip 0.2in

\noindent
\textsf{Step H2:}
For each $ \nu \in \fa^*$ classified in Step H1,
construct a homomorphism
\begin{equation*}
\varphi_{(-\nu,-\lambda)} \in \Hom_{\fg}\left(M(-\nu), M(-\lambda)\right),
\end{equation*}
that is, determine $\bar{u}_{\nu} \in \Cal{U}(\bar{\fn})$ such that 
\begin{equation*}
\varphi_{(-\nu,-\lambda)}
(1\otimes \mathbb{1}_{-\nu-\rho})\in \C \bar{u}_{\nu}\otimes \mathbb{1}_{-\lambda-\rho}.
\end{equation*}
\vskip 0.15in

\noindent
\textsf{Step H3:}
For each $ \nu \in \fa^*$ classified in Step H1,
observe the adjoint action $\Ad$ of 
$M$ on $\C \bar{u}_\nu$ to  
determine the character 
$\chi_{-\nu} \in \Irr(M)_{\mathrm{char}}$, such that
\begin{equation}\label{eqn:hom}
\varphi_{(-\nu,-\lambda)} \in 
\Hom_{\fg, B}(M(\chi^{-1}_{-\nu}, -\nu), M(\chi_\triv, -\lambda)).
\end{equation}

\section{Application to $\widetilde{SL}(3,\mathbb{R})$}
\label{sec:SL3}

In this section, toward the later applications, we discuss the structure of 
$\widetilde{SL}(3,\R)$, the non-linear double cover of $SL(3,\R)$.
The characters of $\wM$ and the polynomial realization of the irreducible representations
of  $\wK$ are also discussed.

\subsection{Notation and normalizations}
\label{subsec:prelim}

We start with the notation and normalizations for $\widetilde{SL}(3,\R)$.
Let $\wG= \widetilde{SL}(3,\R)$ with Lie algebra $\fg_0 = \f{sl}(3,\R)$.
Take the Cartan involution $\gt\colon \fg_0 \to \fg_0$ to be
$\gt(U)=-U^t$. We then write $\fg_0= \fk_0 \oplus \fs_0$ for the Cartan decomposition 
of $\fg_0$ with respect to $\gt$, where
$\fk_0$ and $\fs_0$ are as usual the $+1$ and $-1$ eigenspaces of $\gt$, respectively.
We have $\fk_0 = \f{so}(3)\simeq \f{su}(2)$.

Let $\fa_0$ be the maximal abelian subspace of $\fs_0$ defined by
$\fa_0:=\text{span}_\R\{E_{ii}-E_{i+1, i+1}: i=1, 2\}$,
where $E_{ij}$ are the matrix units.
We also define a nilpotent subalgebra $\fn_0$ of $\fg_0$
by $\fn_0:=\text{span}_\R\{E_{12}, E_{23}, E_{13}\}$.
Then $\fb_0:=\fa_0 \oplus \fn_0$ is a minimal parabolic subalgebra of $\fg_0$.

Let $\wK$, $A$, and $N$ be the analytic subgroups of $\wG$ with Lie algebras 
$\fk_0$, $\fa_0$, and $\fn_0$, respectively, 
so that  $\wG=\wK AN$ is an Iwasawa decomposition of $\wG$.
We write $\wM := Z_{\wK}(\fa_0)$.
Then $\wB := \wM AN$ is a minimal parabolic subgroup of $\wG$ with Lie algebra $\fb_0$. 

For real Lie algebra $\mathfrak{y}_0$, we express its complexification by 
$\mathfrak{y}$.
The complexification $\mathfrak{b}= \mathfrak{a} \oplus \mathfrak{n}$ of 
the minimal parabolic subalgebra $\fb_0=\fa_0\oplus \fn_0$ 
is a Borel subalgebra of $\fg= \f{sl}(3,\C)$.
We write $\Delta\equiv \Delta(\mathfrak{g},\mathfrak{a})$ 
for the set of roots of $\mathfrak{g}$ with respect to 
$\mathfrak{a}$ and denote by $\Delta^+$ the positive system corresponding to $\fb$.
Let $\Pi = \{\ga,\gb\}$ be the set of simple roots
for the positive system $\Delta^+$.
We fix $\ga$ and $\gb$ in such a way that 
the root spaces $\fg_\ga$ and $\fg_\gb$ are given by
$\fg_{\ga} = \C E_{12}$ and $\fg_{\gb}=\C E_{23}$.
We write $\rho$ for half the sum of the positive roots.

Define $X, Y \in \fg$ by
\begin{equation}\label{eqn:XY}
X = \begin{pmatrix}
0 & 0 & 0\\
1 & 0 & 0\\
0 & 0 & 0
\end{pmatrix}
\quad \text{and} \quad
Y = \begin{pmatrix}
0 & 0 & 0\\
0 & 0 & 0\\
0 & 1 & 0
\end{pmatrix}.
\end{equation}
Then $X$ and $Y$ are root vectors of $-\ga$ and $-\beta$, respectively.
The nilpotent radical $\bar{\fn}$ opposite to $\fn$ is then given as the spanned
space of $\{X, Y, [X,Y]\}$.

\subsection{Characters $\wchi_{(\eps, \eps')}$ of $\wM$}
\label{subsec:M}

As described in Section \ref{subsec:recipe2},
the characters $\wchi$ of $\wM \subset \widetilde{SL}(3,\R)$ play a key 
role to construct intertwining differential operators $\Cal{D}$.
In this subsection we describe the characters $\wchi$ 
via the characters of the linear group $M \subset SL(3,\R)$.
To the end we first aim to identify the elements of $\wM$ with those of $M$
in a canonical way.

We start with the identifications of $\fk_0= \f{so}(3)$ and $\fk=\f{so}(3,\C)$
with $\f{su}(2)$ and $\f{sl}(2,\C)$, respectively.
First observe that $\f{su}(2)$ is spanned by the three matrices
\begin{equation*}
A_1:=\begin{pmatrix}
\sqrt{-1} & 0\\
0 & -\sqrt{-1}
\end{pmatrix},\quad
A_2:=
\begin{pmatrix}
0 & 1\\
-1 & 0
\end{pmatrix},\quad
A_3:=
\begin{pmatrix}
0 & \sqrt{-1}\\
\sqrt{-1} & 0
\end{pmatrix}
\end{equation*}
with commutation relations
$[A_1, A_2] = 2A_3$,
$[A_1, A_3] = -2 A_2$, and
$[A_2, A_3] = 2A_1$.
On the other hand,
the Lie algebra $\fk_0=\f{so}(3)$ is spanned by
\begin{equation*}
B_1:=
\begin{pmatrix}
0 & 0 & -1\\
0 & 0 & 0\\
1 & 0 &0
\end{pmatrix}, \quad
B_2:=
\begin{pmatrix}
0 & 0 & 0\\
0 & 0 & -1\\
0 & 1 &0
\end{pmatrix}, \quad
B_3:=
\begin{pmatrix}
0 & -1 & 0\\
1 & 0 & 0\\
0 & 0 &0
\end{pmatrix}
\end{equation*}
with commutation relations
$[B_1,B_2]=B_3$,
$[B_1,B_3]=-B_2$, and
$[B_2, B_3] = B_1$.
Thus $\fk_0$ can be identified with $\f{su}(2)$ via the map
\begin{equation*}
\Omega\colon \fk_0 \stackrel{\sim}{\To} \f{su}(2),
\quad B_j \longmapsto \tfrac{1}{2}A_j \quad \text{for $j=1, 2, 3$}.
\end{equation*}

Let $Z_+$, $Z_-$, $Z_0$ be the elements of $\fk$ defined by
\begin{equation}\label{eqn:ZH}
Z_+:= B_2 - \sqrt{-1}B_3,\quad
Z_-:=-(B_2+\sqrt{-1}B_3), \quad
Z_0:=[Z_+, Z_-].
\end{equation}
We set
\begin{equation}\label{eqn:sl2E}
E_+:=\begin{pmatrix}0 & 1\\ 0 & 0\end{pmatrix}, \quad
E_-:=\begin{pmatrix}0 & 0 \\ 1 & 0 \end{pmatrix}, \quad
E_0:=\begin{pmatrix}1 & 0 \\ 0 & -1 \end{pmatrix}.
\end{equation}
Since $A_2-\sqrt{-1}A_3 =2E_+$ and
$-(A_2 + \sqrt{-1}A_3)= 2E_-$,
one may identify $\fk=\f{so}(3,\C)$ 
with $\f{sl}(2,\C)$ via the map
\begin{equation}\label{eqn:sl2}
\Omega_\C\colon \fk \stackrel{\sim}{\To} \f{sl}(2,\C),
\quad 
Z_j \longmapsto E_j \quad \text{for $j=+, - , 0$.}
\end{equation}

The subgroup $\wM = Z_{\wK}(\fa_0)$ is isomorphic to 
the quaternion group $Q_8$, a non-commutative group of order $8$. 
Since $\wK$ is isomorphic to $SU(2)$, 
we realize $\wM$ as a subgroup of $SU(2)$ by
\begin{equation*}
\wM\simeq \{\pm \wm_0, \;  \pm \wm_1, \; \pm \wm_2, \; \pm \wm_3\},
\end{equation*}
where
\begin{equation}\label{eqn:wM}
\wm_0=
\begin{pmatrix}
1&0\\
0&1
\end{pmatrix}, \;\;
\wm_1=
\begin{pmatrix}
\sqrt{-1}&0\\
0&-\sqrt{-1}
\end{pmatrix}, \;\;
\wm_2=
\begin{pmatrix}
0&1\\
-1&0
\end{pmatrix}, \;\;
\wm_3=
\begin{pmatrix}
0&\sqrt{-1}\\
\sqrt{-1}&0
\end{pmatrix}.
\end{equation}
\vskip 0.05in
\noindent
Let $M$ be a subgroup of $SO(3) \subset SL(3,\R)$ defined by
\begin{equation*}
M = \{ m_0, \; m_1, \; m_2, \; m_3\},
\end{equation*}
where
\begin{equation*}
m_0=
\begin{pmatrix}
1& 0 & 0\\
0 & 1 & 0\\
0 & 0 & 1
\end{pmatrix},\;
m_1=
\begin{pmatrix}
-1& 0 & 0\\
0 & 1 & 0\\
0 & 0 & -1
\end{pmatrix},\;
m_2=
\begin{pmatrix}
1& 0 & 0\\
0 & -1 & 0\\
0 & 0 & -1
\end{pmatrix},\;
m_3=
\begin{pmatrix}
-1& 0 & 0\\
0 & -1 & 0\\
0 & 0 & 1
\end{pmatrix}.
\end{equation*}

The adjoint action $\Ad$ of $SU(2)$ on $\f{su}(2)$ yields a two-to-one covering map
$SU(2)\to SO(3)$. 
We realize $\Ad(SU(2))$ as a matrix group with respect to the ordered basis
$\{A_2, A_1, A_3\}$ of $\f{su}(2)$ in such a way that $\wm_j$ are mapped to 
$m_j$ for $j = 0, 1, 2, 3$.
Lemma \ref{lem:Omega} below shows that the map $\pm m_j \mapsto m_j$
respects the Lie algebra isomorphism 
$\Omega_\C \colon \fk \stackrel{\sim}{\to} \f{sl}(2,\C)$.

\begin{lem}\label{lem:Omega}
For $Z \in \fk$, we have
\begin{equation*}
\Omega_\C(\Ad(m_j)Z) = \Ad(\wm_j) \Omega_\C(Z) 
\quad
\textnormal{for\; $j=0, 1, 2, 3$}.
\end{equation*}
\end{lem}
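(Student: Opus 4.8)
The plan is to reduce the identity to an explicit matrix computation on a basis, after trimming the set of group elements that must be checked. Both sides of the asserted equality are $\C$-linear in $Z$, so it suffices to verify it on a basis of $\fk=\f{so}(3,\C)$; the convenient choice is $\{Z_+,Z_-,Z_0\}$ of \eqref{eqn:ZH}, since by \eqref{eqn:sl2} the map $\Omega_\C$ carries these to the standard generators $E_+,E_-,E_0$ of $\f{sl}(2,\C)$ in \eqref{eqn:sl2E}. Next, since the central element $-\wm_0$ of $SU(2)$ acts trivially under $\Ad$, we have $\Ad(-\wm_j)=\Ad(\wm_j)$, so the sign ambiguity in the realization $\wM\subset SU(2)$ of \eqref{eqn:wM} plays no role. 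Finally, $m_0$ and $\wm_0$ are identity matrices, and a direct multiplication in \eqref{eqn:wM} gives $\wm_3=\wm_1\wm_2$, with likewise $m_3=m_1m_2$; since $\Ad$ is a homomorphism, the identity for $j=1$ and $j=2$ composes to give it for $j=3$. Thus only $j=1,2$ remain.

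For $j=1,2$ I would conjugate directly. On the $\fk$-side each $m_j$ is diagonal, $m_j=\mathrm{diag}(\eps_1,\eps_2,\eps_3)$ with $\eps_i=\pm1$ and $\eps_1\eps_2\eps_3=1$; writing $B_1=-E_{13}+E_{31}$, $B_2=-E_{23}+E_{32}$, $B_3=-E_{12}+E_{21}$ one reads off $\Ad(m_j)B_1=\eps_1\eps_3B_1$, $\Ad(m_j)B_2=\eps_2\eps_3B_2$, $\Ad(m_j)B_3=\eps_1\eps_2B_3$, which by $\eps_1\eps_2\eps_3=1$ equal $\eps_2B_1,\eps_1B_2,\eps_3B_3$. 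Feeding this into $Z_\pm=\pm B_2-\sqrt{-1}B_3$ and $Z_0=[Z_+,Z_-]=-2\sqrt{-1}B_1$ gives $\Ad(m_1)Z_\pm=-Z_\pm$, $\Ad(m_1)Z_0=Z_0$, and $\Ad(m_2)Z_+=-Z_-$, $\Ad(m_2)Z_-=-Z_+$, $\Ad(m_2)Z_0=-Z_0$. On the $\f{sl}(2,\C)$-side, conjugating $E_+,E_-,E_0$ by the explicit $2\times2$ matrices $\wm_1,\wm_2$ of \eqref{eqn:wM} yields $\Ad(\wm_1)E_\pm=-E_\pm$, $\Ad(\wm_1)E_0=E_0$, and $\Ad(\wm_2)E_+=-E_-$, $\Ad(\wm_2)E_-=-E_+$, $\Ad(\wm_2)E_0=-E_0$. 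Comparing term by term through $\Omega_\C(Z_j)=E_j$ closes the cases $j=1,2$, hence all $j$.

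There is no conceptual obstacle; the only care needed is bookkeeping of signs and orderings, in particular respecting the normalization fixed in the paragraph preceding Lemma \ref{lem:Omega}, namely that $\Ad\colon SU(2)\to SO(3)$ is realized in the ordered basis $(A_2,A_1,A_3)$ of $\f{su}(2)$ so that $\wm_j\mapsto m_j$. In fact the computation above re-derives this compatibility: it shows that $\Ad(m_j)$, written in the ordered basis $(B_2,B_1,B_3)$ of $\fk_0$, has matrix $m_j$ --- matching $\Ad(\wm_j)$ in $(A_2,A_1,A_3)$ --- and since $\Omega$ is the map $B_k\mapsto\tfrac12A_k$ (extended $\C$-linearly, carrying $Z_j$ to $E_j$), the intertwining property of $\Omega_\C$ is immediate. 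So Lemma \ref{lem:Omega} is really a consistency check between the two presentations of the covering $SU(2)\to SO(3)$ rather than a substantive statement.
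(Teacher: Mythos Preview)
Your proof is correct and follows essentially the same approach as the paper: reduce by linearity to the basis $\{Z_+,Z_-,Z_0\}$ and verify $\Omega_\C(\Ad(m_j)Z_k)=\Ad(\wm_j)E_k$ directly. The paper simply asserts that ``one can easily check'' these identities, whereas you actually carry out the computation (with the mild efficiency of reducing to $j=1,2$ via $\wm_3=\wm_1\wm_2$).
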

\begin{proof}
By the Lie algebra isomorphism 
$\Omega_\C \colon \fk \stackrel{\sim}{\to} \f{sl}(2,\C)$
in \eqref{eqn:sl2}, in order to prove the lemma, it suffices to show that
$\Omega_\C(\Ad(m_j)Z_k) = \Ad(\wm_j)E_k$
for $j=0, 1, 2, 3$ and $k = +, - , 0$.
One can easily check that these identities indeed hold.
\end{proof}

For $\eps, \eps' \in \{\pm\}$,
we define a character $\chi_{(\eps,\eps')}\colon M \to \{\pm 1\}$ of $M$  by
\begin{equation*}
\chi_{(\eps, \eps')}
(\mathrm{diag}(a_1, a_2, a_3))
:=|a_1|_{\eps}\;|a_3|_{\eps'},
\end{equation*}
where
$|a|_+:=|a|$ and $|a|_-:=a$.
Via the character $\chi_{(\eps, \eps')}$ of $M$,
we define a character $\wchi_{(\eps, \eps')}\colon \wM \to \{\pm 1\}$ of
$\wM$ by
\begin{equation*}
\wchi_{(\eps,\eps')}(\pm \wm_j) := \chi_{(\eps, \eps')}(m_j) 
\quad
\textnormal{for\; $j=0, 1, 2, 3$}.
\end{equation*}
\vskip 0.1in \noindent
We often abbreviate $\wchi_{(\eps,\eps')}$ as $(\eps,\eps')$. 
Table \ref{table:char} illustrates the character table for $(\eps,\eps') =\wchi_{(\eps,\eps')}$.
With the characters $(\eps, \eps')$, the set $\Irr(\wM)$ of 
equivalence classes of irreducible representations of $\wM$ is given as follows:
\begin{equation}\label{eqn:char}
\Irr(\wM)= \{ \pp, \pmi, \mip, \mm, \mathbb{H}\},
\end{equation}
where $\mathbb{H}$ is the unique genuine $2$-dimensional representation 
of $\wM \simeq Q_8$.

\begin{table}[h]
\caption{Character table for $(\eps,\eps')$}
\begin{center}
\renewcommand{\arraystretch}{1.2} 
{
\begin{tabular}{|c|c|c|c|c|}
\hline
& $\pm \wm_0$ & $\pm\wm_1$ & $\pm\wm_2$ & $\pm\wm_3$\\
\hline
$\pp$ & $1$ & $1$ & $1$ & $1$ \\
\hline
$\pmi$ & $1$  & $-1$  & $-1$ & $1$\\
\hline
$\mip$ & $1$ & $-1$ & $1$ & $-1$\\
\hline
$\mm$ & $1$ & $1$ & $-1$ & $-1$\\
\hline
\end{tabular}
}
\end{center}
\label{table:char}
\end{table}%

\subsection{Polynomial realization of the irreducible representations of $\wK$}
\label{subsec:K}

As indicated in the recipe in Section \ref{subsec:recipe1}, to determine the 
$\wK$-type formula of $\Cal{S}ol_{(u; \lambda)}(\sigma)_{\wK}$,
a realization of irreducible representations $\delta$ of $\wK$ 
is chosen. 
In the present situation that  $\wK \simeq SU(2)$,
we realize $\Irr(\wK)$ as 
$\Irr(\wK) \simeq \{(\pi_n, \Pol_n[t]) : n \in \Z_{\geq 0}\}$
with $\Pol_n[t] := \{p(t) \in \Pol[t] : \deg p(t) \leq n\}$,
where $\Pol[t]$ is the space of polynomials of one variable $t$ with complex coefficients.
The representation $\pi_n$ of $SU(2)$ on $\Pol_n[t]$ is defined by
\begin{equation}\label{eqn:pin}
\left(\pi_n(g)p\right)(t) := (ct+d)^n p\left(\frac{at+b}{ct+d}\right)
\quad \text{for} \quad
g = \begin{pmatrix}
a & b\\ c& d
\end{pmatrix}^{-1}.
\end{equation}
The elements $\wm_j$ for $j=1,2, 3$ of 
$\wM\subset SU(2)$ defined in \eqref{eqn:wM}
act on $\Pol_n[t]$ as follows:
\begin{equation}\label{eqn:pi-wM}
\wm_1\colon p(t) \mapsto  (\sqrt{-1})^n p(-t); \quad
\wm_2\colon p(t) \mapsto t^np\left(-\frac{1}{t}\right); \quad
\wm_3\colon p(t) \mapsto (-\sqrt{-1}t)^n p\left(-\frac{1}{t}\right).
\end{equation}

Let $d\pi_n$ be the differential of the representation $\pi_n$. 
Then \eqref{eqn:sl2E} and \eqref{eqn:pin} imply that we have
\begin{equation}\label{eqn:Epm}
\dpin(E_+) = -\frac{d}{dt} \quad \text{and} \quad 
\dpin(E_-) = -nt + t^2\frac{d}{dt}. 
\end{equation}
As usual we extend $d\pi_n$ complex-linearly to $\f{sl}(2,\C)$ and also naturally to
the universal enveloping algebra $\Cal{U}(\f{sl}(2,\C))$.
We then let $\Cal{U}(\fk)$ act on $\Pol_n[t]$ 
via the isomorphism $\Omega_\C \colon \fk \stackrel{\sim}{\to} \f{sl}(2,\C)$ 
in \eqref{eqn:sl2}. For simplicity we write 
$\dpin(F)=\dpin(\Omega_\C(F))$ for $F\in \Cal{U}(\fk)$.
\vskip 0.1in

By Corollary \ref{cor:Frob3} and \eqref{eqn:Ksol2}
with the realization $\Irr(\wK) \simeq \{(\pi_n, \Pol_n[t]) : n \in \Z_{\geq 0}\}$, we have 
\begin{equation}\label{eqn:Soln}
\Cal{S}ol_{(u; \lambda)}(\sigma)_{\wK}
\simeq 
\bigoplus_{n =0}^\infty
\Pol_n[t] \otimes 
\mathrm{Hom}_{\wM}\left(
\Sol_{(u)}(n), \sigma\right),
\end{equation}
where
\begin{equation*}
\Sol_{(u)}(n):=\{ p(t) \in \Pol_n[t] : d\pi_n(\tau(u^{\flat}))p(t) = 0\}.
\end{equation*}
Here $\tau\colon \fg \to \fg$ is the conjugation with respect to 
the real form $\fg_0$.
Since $\fg_0$ is the split real form of $\fg$, we have 
$\tau(u^\flat)=u^\flat$. Thus,
\begin{equation}\label{eqn:Soln2}
\Sol_{(u)}(n)=\{ p(t) \in \Pol_n[t] : d\pi_n(u^{\flat})p(t) = 0\}.
\end{equation}

By using \eqref{eqn:Soln} and \eqref{eqn:Soln2}, 
we shall determine the $\wK$-type formulas for 
intertwining differential operators for the cases of
$\lambda = -\rho$ and $\lambda=-(1/2)\rho$ in Sections \ref{sec:rho} and \ref{sec:wrho},
respectively.

\section{The case of infinitesimal character $\rho$}
\label{sec:rho}

In this section, in accordance with the recipes given in Sections 
\ref{subsec:recipe1} and \ref{subsec:recipe2},
we determine the $\wK$-type formula of the solution space 
$\Cal{S}ol_{(u;\lambda)}(\sigma)$ with $\lambda= -\rho$ 
for $\wG = \widetilde{SL}(3,\R)$.
This is done in Theorem \ref{thm:XandY}.
We continue with the notation and normalizations from the previous section.

\subsection{Classification and construction of intertwining differential operators}
\label{subsec:IDO-rho1}

Our first goal is to classify and construct intertwining differential operators,
namely, to achieve Steps H1--H3 in the recipe in Section \ref{subsec:recipe2}.
As the first step we start by classifying $\nu \in  \fa^*$ with $\nu \neq \rho$
such that 
$\Hom_{\fg}(M(-\nu), M(\rho))\neq \{0\}$.

\begin{lem}\label{lem:rho-map1}
The following are equivalent on $\nu \in \fa^*$ with $\nu\neq \rho$.
\begin{enumerate}
\item[(i)] $\Hom_\fg(M(-\nu), M(\rho))\neq \{0\}$.
\item[(ii)] $\nu = \pm \ga, \pm \gb, \rho$.
\end{enumerate}
\end{lem}

\begin{proof}
The BGG--Verma theorem (Theorem \ref{thm:BGGV}) shows that
the following are all homomorphisms (that are not proportional to the identities)
obtained from $M(\rho)$,
where $\varphi_{(\mu_1,\mu_2)}$ denotes the homomorphism 
from $M(\mu_1)$ to $M(\mu_2)$.
\begin{equation}\label{eqn:homs}
\begin{gathered}
\xymatrix@R=2em{
&M(-\gb) 
\quad\ar[r]^{ \varphi_{(-\gb,\gb)}}  
\ar[ddr]_{\qquad \varphi_{(-\ga,\gb)}\hspace{0.2cm}}
& \quad M(\gb)  \ar[dr]^{ \varphi_{(\gb,\rho)}}&\\
M(-\rho) 
\ar[ur]^{ \varphi_{(-\rho,-\gb)}}
\ar[dr]_{\varphi_{(-\rho,-\ga)}}
\ar[rrr]^{\qquad \varphi_{(-\rho,\rho)}\hspace{4.5cm}}
& & & M(\rho)\\
&M(-\ga) \quad \ar[r]_{\varphi_{(-\ga,\ga)}}  
\ar[uur]_{\hspace{0.2cm} \varphi_{(-\gb,\ga)}}
&\quad M(\ga)  \ar[ur]_{ \varphi_{(\ga,\rho)}}&\\
}
\end{gathered}
\end{equation}

\end{proof}

The next step is to construct the homomorphism 
$\varphi_{(-\nu,\rho)} \in \Hom_{\fg}(M(-\nu), M(\rho))$ for 
$\nu = \pm \ga, \pm \gb, \rho$.
Let $X$ and $Y$ be the root vectors of $-\ga$ and $-\gb$
defined in \eqref{eqn:XY}, respectively.

\begin{lem}\label{lem:maps2}
Up to scalar multiple,
the image of $1\otimes \mathbb{1}_{-\nu-\rho} \in M(-\nu)$
under the map $\varphi_{(-\nu,\, \rho)} \in \Hom_{\fg}(M(-\nu), M(\rho))$ 
for $\nu = \pm \ga, \pm \gb, \rho$
is given as follows.
\begin{alignat*}{4}
&\varphi_{(\gb,\rho)}&&: &&\; X\otimes \mathbb{1}_0,
&&\;\varphi_{(-\gb,\rho)}:  Y^2X\otimes \mathbb{1}_0,\\
& \varphi_{(\ga,\rho)}&&: &&\;  Y\otimes \mathbb{1}_0,
&&\;\varphi_{(-\ga,\rho)}: X^2Y\otimes \mathbb{1}_0,\\
&\varphi_{(-\rho,\rho)}&&: &&\;  XY^2X\otimes \mathbb{1}_0 
(=YX^2Y \otimes \mathbb{1}_0)
\end{alignat*}
\end{lem}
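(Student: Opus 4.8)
The plan is to identify, up to scalar, the singular vector of $M(\rho)=M(\chi_\triv,\rho)\vert_\fg$ which is the image of the highest weight vector $1\otimes\mathbb{1}_{-\nu-\rho}$ of $M(-\nu)$ under $\varphi_{(-\nu,\rho)}$, for each $\nu=\pm\ga,\pm\gb,\rho$. By Lemma \ref{lem:rho-map1} the space $\Hom_\fg(M(-\nu),M(\rho))$ is non-zero precisely for these $\nu$, and since $\dim_\C\Hom_\fg(M(-\nu),M(\rho))\le 1$ it is one-dimensional; a homomorphism is determined by the image of the highest weight vector, which must be a weight vector of $M(\rho)$ of weight $-\nu-\rho$, and under the PBW identification $M(\rho)\cong\Cal{U}(\bar{\fn})$, $u\otimes\mathbb{1}_0\mapsto u$, this is $u_\nu\otimes\mathbb{1}_0$ for a non-zero $u_\nu\in\Cal{U}(\bar{\fn})$ of weight $-\nu-\rho$, unique up to scalar. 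So it suffices to exhibit such a $u_\nu$, and I would do this by repeated application of Proposition \ref{prop:map}, using throughout that in $\fg=\f{sl}(3,\C)$ one has $\rho=\ga+\gb$, hence $\IP{\rho}{\cga}=\IP{\rho}{\cgb}=1$, and writing $X=X_{-\ga}$, $Y=X_{-\gb}$.

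First I would treat $\varphi_{(\gb,\rho)}$ and $\varphi_{(\ga,\rho)}$ directly. Proposition \ref{prop:map} applied to $M(\rho)$ with the simple root $\ga$ and $k=\IP{\rho}{\cga}=1$ produces the singular vector $X\otimes\mathbb{1}_0$ of weight $-\ga$ together with the map $\varphi_{(s_\ga(\rho),\rho)}=\varphi_{(\gb,\rho)}$; with $\gb$ in place of $\ga$ one likewise obtains $\varphi_{(\ga,\rho)}\colon 1\otimes\mathbb{1}_{-\ga-\rho}\mapsto Y\otimes\mathbb{1}_0$. For the third-order maps I would compose. Proposition \ref{prop:map} applied to $M(\gb)$ (whose highest weight is $\gb-\rho=-\ga$), with the root $\gb$ and $k=\IP{\gb}{\cgb}=2$, gives $\varphi_{(-\gb,\gb)}\colon 1\otimes\mathbb{1}_{\gb-\rho}\mapsto Y^2\otimes\mathbb{1}_{\gb-\rho}$; since $\varphi_{(\gb,\rho)}$ is a $\fg$-homomorphism, the composite $\varphi_{(\gb,\rho)}\circ\varphi_{(-\gb,\gb)}$ sends $1\otimes\mathbb{1}_{-\gb-\rho}$ to $Y^2X\otimes\mathbb{1}_0$, which is non-zero since $Y^2X$ is a non-zero element of $\Cal{U}(\bar{\fn})$; hence this composite is a non-zero scalar multiple of $\varphi_{(-\gb,\rho)}$. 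The case of $\varphi_{(-\ga,\rho)}$ is the mirror image and yields $X^2Y\otimes\mathbb{1}_0$.

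For the fourth-order map $\varphi_{(-\rho,\rho)}$ I would compose along one of the two maximal chains in the diagram \eqref{eqn:homs}. Proposition \ref{prop:map} applied to $M(-\gb)$ with the root $\ga$ and $k=\IP{-\gb}{\cga}=1$ gives $\varphi_{(-\rho,-\gb)}\colon 1\otimes\mathbb{1}_{-\rho-\rho}\mapsto X\otimes\mathbb{1}_{-\gb-\rho}$ (here $s_\ga(-\gb)=-\rho$); chasing this through $\varphi_{(-\gb,\gb)}$ and then $\varphi_{(\gb,\rho)}$ gives $XY^2X\otimes\mathbb{1}_0$, while the parallel chain through $M(-\ga)$ and $M(\ga)$ gives $YX^2Y\otimes\mathbb{1}_0$; both are non-zero elements of the one-dimensional space $\Hom_\fg(M(-\rho),M(\rho))$, hence proportional. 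The one genuinely computational point, which I expect to be the main (though still routine) obstacle, is that these two elements are in fact equal, i.e.\ $XY^2X\otimes\mathbb{1}_0=YX^2Y\otimes\mathbb{1}_0$: since $\bar{\fn}=\spn\{X,Y,[X,Y]\}$ is a three-dimensional Heisenberg algebra, the element $Z:=[X,Y]$ is central in $\bar{\fn}$, and a short computation with the relation $XY=YX+Z$ reduces both sides to $Y^2X^2+2ZYX$. Apart from this identity, the only remaining care is in tracking scalars through the two composition steps; everything else reduces to Proposition \ref{prop:map} and Lemma \ref{lem:rho-map1}.
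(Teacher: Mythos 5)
Your argument is exactly the paper's (one-line) proof, carried out in full: apply Proposition~\ref{prop:map} to each elementary arrow in the BGG diagram~\eqref{eqn:homs} and compose, and your computations (including the verification $XY^2X=YX^2Y$ via centrality of $[X,Y]$ in $\bar{\fn}$, which the paper leaves unchecked) are correct. The only flaw is a harmless typo: in describing $\varphi_{(-\gb,\gb)}$ the source vector should be $1\otimes\mathbb{1}_{-\gb-\rho}$, not $1\otimes\mathbb{1}_{\gb-\rho}$.
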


\begin{proof}
Proposition \ref{prop:map} gives all maps $\varphi_{(\mu_1,\mu_2)}$
in \eqref{eqn:homs}.
By composing these maps,  we obtain the lemma.
\end{proof}

As the third step, for each $\nu=\pm \ga, \pm \gb, \rho$, 
we next determine a character 
$\wchi_{-\nu}\in \Irr(\wM)_{\text{char}}$
such that $\Hom_{\fg, \wB}(M(\wchi_{-\nu}^{-1}, -\nu), M(\pp, \rho)) \neq \{0\}$.
As the third step, for each $\nu=\pm \ga, \pm \gb, \rho$, 
we next determine a character 
$\wchi_{-\nu}\in \Irr(\wM)_{\text{char}}$
such that $\Hom_{\fg, \wB}(M(\wchi_{-\nu}^{-1}, -\nu), M(\pp, \rho)) \neq \{0\}$.
Let $\bar{u}_{\nu}$ be the element of $\Cal{U}(\bar\fn)$ determined in Lemma \ref{lem:maps2}
such that $\varphi_{(-\nu,\rho)}(1\otimes \mathbb{1}_{-\nu-\rho}) 
= \bar{u}_{\nu} \otimes \mathbb{1}_0$. We have
\begin{alignat}{4}\label{eqn:u-nu}
&\nu=-\beta&&: &&\; \bar{u}_{\beta} = X, \; &&\;\nu=\beta: \bar{u}_{-\beta}=Y^2X, \nonumber\\
&\nu=-\alpha&&: &&\; \bar{u}_{\alpha} = Y, \; &&\; \nu=\alpha: \bar{u}_{-\alpha}=X^2Y,\\
&\nu=\rho&&: &&\; \bar{u}_{-\rho} = XY^2X (=YX^2Y).\nonumber
\end{alignat}

\begin{lem}\label{lem:char-rho}
Let $\nu \in \{\pm \ga, \pm \gb, \rho\}$.
Then the character $\wchi_{-\nu} \in \Irr(\wM)_{\mathrm{char}}$
for which we have $\Hom_{\fg, \wB}(M(\wchi_{-\nu}^{-1}, -\nu), M(\pp, \rho)) \neq \{0\}$ 
is given as follows.
\begin{enumerate}
\item[(a)]  $\nu=\pm\gb:$ $\wchi_{-\nu} = \pmi$.
\item[(b)]  $\nu=\pm\ga:$ $\wchi_{-\nu} =\mip$.
\item[(c)]  $\nu=\rho \hspace{0.35cm} :$ $\wchi_{-\nu} =\pp$.
\end{enumerate} 
\end{lem}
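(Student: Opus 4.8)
The plan is to identify $\wchi_{-\nu}$ with the character by which $\wM$ acts, through the adjoint representation, on the line $\C\bar{u}_\nu \subset \Cal{U}(\bar\fn)$ spanned by the singular vector produced in Lemma \ref{lem:maps2} (so $X$ for $\varphi_{(\gb,\rho)}$, $Y^2X$ for $\varphi_{(-\gb,\rho)}$, $Y$ for $\varphi_{(\ga,\rho)}$, $X^2Y$ for $\varphi_{(-\ga,\rho)}$, and $XY^2X$ for $\varphi_{(-\rho,\rho)}$). Arguing exactly as in Step H3 of the recipe in Section \ref{subsec:recipe2}, but for the pair $(\fg,\wB)$, and using that every character of $\wM$ is valued in $\{\pm 1\}$ and hence equals its own inverse, this adjoint character is precisely $\wchi_{-\nu}$. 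Thus the lemma is reduced to the evaluation of an $\Ad(\wM)$-eigenvalue on each $\bar{u}_\nu$.

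First I would note that the adjoint action of $\wG$ on $\fg_0$ factors through the covering $\wG \to SL(3,\R)$ --- its kernel is central and so acts trivially under $\Ad$ --- whence $\Ad(\pm\wm_j) = \Ad(m_j)$ for $j=0,1,2,3$, with $m_j$ the diagonal matrices of Section \ref{subsec:M}; in particular the non-linear double cover plays no role. Writing $m_j = \mathrm{diag}(a_1,a_2,a_3)$ with $a_i \in \{\pm 1\}$, conjugation scales the matrix unit $E_{pq}$ by $a_p/a_q$, so $\Ad(m_j)X = (a_2/a_1)X$ and $\Ad(m_j)Y = (a_3/a_2)Y$ for the $X, Y$ of \eqref{eqn:XY}, and $\Ad(m_j)$ acts on each monomial $Y^2X$, $X^2Y$, $XY^2X$ by the corresponding product of these scalars. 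Since $a_i^2 = 1$, all squared factors equal $1$; consequently $X$ and $Y^2X$ transform by one and the same $\wM$-character (so $\nu = \pm\gb$ are settled together), $Y$ and $X^2Y$ transform by a common character (settling $\nu = \pm\ga$), and $XY^2X$ is $\Ad(\wM)$-fixed (the case $\nu = \rho$).

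It then remains to read off $\Ad(m_j)X$ and $\Ad(m_j)Y$ on $m_1, m_2, m_3$ from the explicit matrices of Section \ref{subsec:M} --- a one-line computation giving the eigenvalue patterns $(1,-1,-1,1)$ on $\C X$ and $(1,-1,1,-1)$ on $\C Y$ across $\pm\wm_0, \pm\wm_1, \pm\wm_2, \pm\wm_3$ --- and to match these against Table \ref{table:char}. This identifies the $\wM$-character on $\C X$, hence on $\C Y^2X$, as $\pmi$, the one on $\C Y$, hence on $\C X^2Y$, as $\mip$, and the one on $\C XY^2X$ as the trivial character $\pp$, which are assertions (a), (b) and (c). I expect no genuine obstacle here; the only care required is bookkeeping --- pairing each $\nu \in \{\pm\ga,\pm\gb,\rho\}$ correctly with its singular vector from Lemma \ref{lem:maps2}, and matching each sign pattern with the correct row of Table \ref{table:char}.
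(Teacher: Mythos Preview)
Your proposal is correct and follows the same approach as the paper: the paper's proof simply states that since the adjoint action of $\wM$ factors through $M$, it suffices to compute $\Ad(M)$ on $\C\bar{u}_\nu$, and then appeals to a direct computation. You have written out that direct computation explicitly, and your eigenvalue patterns and their identification via Table~\ref{table:char} are correct.
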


\begin{proof}
We wish to check that, via the adjoint action, the subgroup $\wM$ acts on 
$\C \bar{u}_{\nu}$ by the proposed character.
Since the adjoint action of $\wM$ factors through $M$,
it suffices to consider the adjoint action of $M$ on $\C \bar{u}_{\nu}$.
Now a direct computation yields the lemma.
\end{proof}

For $\varphi_{(-\nu,\rho)}(1\otimes \mathbb{1}_{-\nu-\rho})
=\bar{u}_{\nu}\otimes \mathbb{1}_0$,
write $\D_{\bar{u}_{\nu}} = R(\bar{u}_{\nu})$. 
We then obtain the following.

\begin{lem}\label{lem:diff-rho}
For $\nu=\pm \ga, \pm \gb, \rho$, 
let $\wchi_{-\nu}$ be the character of $\wM$ determined in Lemma \ref{lem:char-rho}.
Then we have
\begin{equation*}
\Diff_{\wG}\left(I(\pp, -\rho), I(\wchi_{-\nu}, \nu)\right) =\C \D_{\bar{u}_{\nu}}.
\end{equation*}
\end{lem}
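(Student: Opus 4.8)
The plan is to combine the duality theorem (Theorem~\ref{thm:duality}) with the classification and construction of Verma module homomorphisms already established in Lemmas~\ref{lem:rho-map1}--\ref{lem:char-rho}. First I would recall that, by the duality isomorphism \eqref{eqn:duality} specialized to the minimal parabolic $\wB$ and applied to line bundles,
\begin{equation*}
\Diff_{\wG}\left(I(\pp, -\rho), I(\wchi_{-\nu}, \nu)\right)
\simeq
\Hom_{\fg, \wB}\left(M(\wchi_{-\nu}^{-1}, -\nu),\, M(\pp, \rho)\right),
\end{equation*}
so that it suffices to show the right-hand space is exactly one-dimensional and that its generator corresponds under $\D_{H\to D}$ to $R(\bar u_\nu)=\D_{\bar u_\nu}$.

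Next I would establish the dimension count. By Lemma~\ref{lem:char} the only $\sigma \in \Irr(\wM)$ for which $\Hom_{\fg,\wB}(M(\sigma,-\nu), M(\pp,\rho))$ can be nonzero is a character, and forgetting the $\wM$-structure gives an inclusion into $\Hom_{\fg}(M(-\nu), M(\rho))$, which by the BGG--Verma theorem (Theorem~\ref{thm:BGGV}) has dimension at most $1$. Lemma~\ref{lem:rho-map1} tells us this $\fg$-homomorphism space is indeed one-dimensional for $\nu = \pm\ga, \pm\gb, \rho$, and Lemma~\ref{lem:maps2} exhibits its generator $\varphi_{(-\nu,\rho)}$ with $\varphi_{(-\nu,\rho)}(1\otimes\mathbb{1}_{-\nu-\rho}) = \bar u_\nu \otimes \mathbb{1}_0$ for the elements $\bar u_\nu \in \Cal{U}(\bar\fn)$ listed in \eqref{eqn:u-nu}. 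Then Lemma~\ref{lem:char-rho} verifies that this generator is in fact $\wM$-equivariant when the source is equipped with the character $\wchi_{-\nu}^{-1}$, i.e.\ $\varphi_{(-\nu,\rho)} \in \Hom_{\fg,\wB}(M(\wchi_{-\nu}^{-1},-\nu), M(\pp,\rho))$; since it is already nonzero, the $\wB$-equivariant space is exactly one-dimensional, spanned by $\varphi_{(-\nu,\rho)}$.

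Finally I would translate this back across the duality. The explicit formula \eqref{eqn:duality2} (in the line-bundle-to-line-bundle case, as also spelled out in Section~\ref{subsec:recipe1}) says precisely that the differential operator $\D_{H\to D}(\varphi_{(-\nu,\rho)})$ attached to a homomorphism sending $1\otimes\mathbb{1}_{-\nu-\rho} \mapsto \bar u_\nu \otimes \mathbb{1}_0$ is $F \mapsto R(\bar u_\nu)F$, that is, $\D_{\bar u_\nu}$. Since $\D_{H\to D}$ is a linear isomorphism, the one-dimensionality of the $\Hom$ space transfers to give $\Diff_{\wG}(I(\pp,-\rho), I(\wchi_{-\nu},\nu)) = \C\,\D_{\bar u_\nu}$, as claimed.

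There is essentially no new obstacle here: the lemma is a bookkeeping consequence of the three preceding lemmas plus the duality theorem. The only point requiring a little care is making sure the $\dim \le 1$ bound is applied at the level of $\wB$-homomorphisms (not merely $\fg$-homomorphisms) and that the candidate generator $\varphi_{(-\nu,\rho)}$ is genuinely $\wB$-equivariant for the stated character rather than some other one — but this is exactly the content of Lemmas~\ref{lem:char} and~\ref{lem:char-rho}, so the proof reduces to citing them in the right order.
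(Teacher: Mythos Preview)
Your proposal is correct and follows essentially the same approach as the paper: the paper's proof simply states that the lemma is an immediate consequence of Lemmas~\ref{lem:maps2} and~\ref{lem:char-rho} together with the duality theorem. Your write-up unpacks this in more detail (making explicit the $\dim \le 1$ bound and the identification $\D_{H\to D}(\varphi_{(-\nu,\rho)}) = R(\bar u_\nu)$), but the underlying argument is identical.
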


\begin{proof}
This is an immediate consequence of Lemmas \ref{lem:maps2} and \ref{lem:char-rho}
and the duality theorem.
\end{proof}

It follows from \eqref{eqn:u-nu} that we have
\begin{alignat*}{2}
&\Cal{S}ol_{(X;-\rho)}(\sigma) &&\subset \Cal{S}ol_{(\bar{u}_\nu;-\rho)}(\sigma) 
\quad \text{for $\nu = -\gb, -\rho$,}\\
&\Cal{S}ol_{(Y;-\rho)}(\sigma) &&\subset  \Cal{S}ol_{(\bar{u}_\nu;-\rho)}(\sigma) 
\quad \text{for $\nu = -\ga, -\rho$.}
\end{alignat*}
Then, in the next subsection,
we consider the $\wK$-type formulas of the solution spaces
$\Cal{S}ol_{(X;-\rho)}(\sigma)_{\wK}$, $\Cal{S}ol_{(Y;-\rho)}(\sigma)_{\wK}$,
 and $\Cal{S}ol_{(X, Y;-\rho)}(\sigma)_{\wK}
 :=\Cal{S}ol_{(X;-\rho)}(\sigma)_{\wK} \cap \Cal{S}ol_{(Y;-\rho)}(\sigma)_{\wK}$.
 
\subsection{$\wK$-type formulas of solution spaces }
\label{subsec:IDO-rho2}

We now aim to find the $\wK$-type formulas of 
$\Cal{S}ol_{(X;-\rho)}(\sigma)_{\wK}$, $\Cal{S}ol_{(Y;-\rho)}(\sigma)_{\wK}$,
 and $\Cal{S}ol_{(X, Y;-\rho)}(\sigma)_{\wK}$.
In order to determine them,
as in the recipe in Section \ref{subsec:recipe1},
we start by finding $\tau(u^\flat)=u^\flat\in \fk_0$ for $u = X, Y \in \bar{\fn}_0$.
Let $Z_+$ and $Z_-$ be the nilpotent elements 
in the $\f{sl}(2)$-triple of $\fk = \f{so}(3,\C)$ defined in \eqref{eqn:ZH}.

\begin{lem}\label{lem:Kpic-rho}
We have $X^\flat = \frac{\sqrt{-1}}{2}(Z_+ + Z_-)$ 
and $Y^\flat = \frac{1}{2}(Z_+ - Z_-)$.
\end{lem}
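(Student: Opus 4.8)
The plan is to compute the "flat" operation explicitly, unwinding the definition of $u^\flat$ from the isomorphism $\iota$ in Lemma~\ref{lem:GK}. Recall that for $u \in \Cal{U}(\bar\fn)$, the element $u^\flat \in \Cal{U}(\fk)$ is characterized by $u^\flat \otimes \mathbb{1}_{-\lambda-\rho} = u \otimes \mathbb{1}_{-\lambda-\rho}$ in $\Cal{U}(\fg)\otimes_{\Cal{U}(\fp)}\C_{-\lambda-\rho}$; equivalently, working modulo $\Cal{U}(\fg)\fn$ and the $\fa$-action, one writes $u = u^\flat + (\text{terms in }\Cal{U}(\fg)\fb)$ under the decomposition $\fg = \fk \oplus \fb$ (as vector spaces, since $\fg = \fk \oplus \fa \oplus \fn$). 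So the task reduces to expressing $X$ and $Y$ in terms of this splitting.

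First I would write down the explicit matrices. Using $X = E_{21}$ and the basis from \eqref{eqn:ZH}, namely $B_2, B_3 \in \fk_0 = \f{so}(3)$ with $Z_+ = B_2 - \sqrt{-1}B_3$ and $Z_- = -(B_2 + \sqrt{-1}B_3)$, one has $B_2 = E_{32}-E_{23}$ and $B_3 = E_{21}-E_{12}$ in the $3\times 3$ realization. Then $\frac{1}{2}(Z_+ - Z_-) = B_2 = E_{32}-E_{23}$, and since $Y = E_{32}$ is a root vector for $-\gb$, we get $Y = B_2 + E_{23}$ with $E_{23} \in \fg_\gb \subset \fn$; hence $Y \equiv B_2 = \tfrac12(Z_+ - Z_-) \pmod{\fn}$, giving $Y^\flat = \tfrac12(Z_+ - Z_-)$. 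Similarly, $\frac{\sqrt{-1}}{2}(Z_+ + Z_-) = \frac{\sqrt{-1}}{2}\big((B_2 - \sqrt{-1}B_3) - (B_2 + \sqrt{-1}B_3)\big) = \sqrt{-1}\cdot(-\sqrt{-1})B_3 = B_3 = E_{21}-E_{12}$, and since $X = E_{21}$ is a root vector for $-\ga$, we have $X = B_3 + E_{12}$ with $E_{12} \in \fg_\ga \subset \fn$; therefore $X \equiv B_3 = \frac{\sqrt{-1}}{2}(Z_+ + Z_-) \pmod{\fn}$, giving $X^\flat = \frac{\sqrt{-1}}{2}(Z_+ + Z_-)$. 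The $\fa$-component vanishes here because $X, Y$ are off-diagonal, so there is no correction from $\C_{-\lambda-\rho}$; this should be checked but is immediate.

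The main obstacle — really the only subtle point — is bookkeeping the sign and normalization conventions consistently: which of $\ga,\gb$ corresponds to which root space (fixed in Section~\ref{subsec:prelim} by $\fg_\ga = \C E_{12}$, $\fg_\gb = \C E_{23}$), the precise matrix forms of $B_2, B_3$ under the chosen realization of $\f{so}(3)$, and the factor conventions in $Z_\pm$. Once these are pinned down, the computation is a direct verification that $X - \frac{\sqrt{-1}}{2}(Z_++Z_-) \in \fn$ and $Y - \frac{1}{2}(Z_+-Z_-) \in \fn$, together with the observation that there is no $\fa$-term. I would present this as: recall $\fg = \fk_0^{\C} \oplus \fa \oplus \fn$, compute $B_2$ and $B_3$ explicitly as matrices, read off $X = B_3 + E_{12}$ and $Y = B_2 + E_{32}$ with $E_{12}, E_{32}$... wait, $E_{32} \in \fg_\gb$? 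No: $E_{23} \in \fg_\gb$. Let me restate: $Y = E_{32}$, $B_2 = E_{32} - E_{23}$, so $Y = B_2 + E_{23}$ with $E_{23} \in \fg_\gb \subset \fn$. Then invert the identification $\Omega_\C$ of \eqref{eqn:sl2} only at the end if one wishes, but the statement is already in terms of $Z_\pm \in \fk$, so no further translation is needed. This completes the proof.

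\begin{proof}
Recall the vector space decomposition $\fg = \fk \oplus \fa \oplus \fn$ and that, by the definition of $u^\flat$ via the isomorphism $\iota$ of \eqref{eqn:GK}, for $u \in \Cal{U}(\bar\fn)$ the element $u^\flat \in \Cal{U}(\fk)$ is the unique one with $u^\flat \otimes \mathbb{1}_{-(\lambda+\rho)} = u\otimes \mathbb{1}_{-(\lambda+\rho)}$ in $\Cal{U}(\fg)\otimes_{\Cal{U}(\fp)}\C_{-(\lambda+\rho)}$. For $u \in \bar\fn \subset \fg$ this just says that $u^\flat$ is the $\fk$-component of $u$ under $\fg = \fk\oplus\fa\oplus\fn$, since $\fa\oplus\fn = \fb$ acts by scalars (through $\fa$) and by $0$ (through $\fn$) on $\mathbb{1}_{-(\lambda+\rho)}$, and $u$ being off-diagonal contributes no $\fa$-term.

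In the $3\times 3$ realization we have $X = E_{21}$ and $Y = E_{32}$. From \eqref{eqn:ZH}, $B_2 = E_{32}-E_{23}$ and $B_3 = E_{21}-E_{12}$ lie in $\fk_0 = \f{so}(3)$, while $E_{12}\in\fg_\ga\subset\fn$ and $E_{23}\in\fg_\gb\subset\fn$. Hence
\begin{equation*}
X = B_3 + E_{12}, \qquad Y = B_2 + E_{23},
\end{equation*}
so that $X^\flat = B_3$ and $Y^\flat = B_2$ under $\fg = \fk\oplus\fa\oplus\fn$. Finally, from \eqref{eqn:ZH} we compute
\begin{equation*}
\tfrac{\sqrt{-1}}{2}(Z_+ + Z_-) = \tfrac{\sqrt{-1}}{2}\big((B_2-\sqrt{-1}B_3)-(B_2+\sqrt{-1}B_3)\big) = B_3,
\qquad
\tfrac12(Z_+ - Z_-) = \tfrac12\big((B_2-\sqrt{-1}B_3)+(B_2+\sqrt{-1}B_3)\big) = B_2.
\end{equation*}
Therefore $X^\flat = \tfrac{\sqrt{-1}}{2}(Z_+ + Z_-)$ and $Y^\flat = \tfrac12(Z_+ - Z_-)$, as claimed.
\end{proof}
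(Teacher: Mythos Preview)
Your proof is correct and takes essentially the same approach as the paper: both identify $X^\flat$ and $Y^\flat$ as the $\fk$-components of $X$ and $Y$ in the Iwasawa decomposition $\fg=\fk\oplus\fa\oplus\fn$. The paper phrases this via the Cartan involution as $X^\flat=X+\theta(X)$ and $Y^\flat=Y+\theta(Y)$ (which is exactly your $B_3$ and $B_2$, since $\theta(E_{21})=-E_{12}$ and $\theta(E_{32})=-E_{23}$), whereas you write out the decomposition explicitly; the computations are identical.
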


\begin{proof}
As $X$ and $Y$ are root vectors for $-\ga$ and $-\gb$ with $\ga, \gb \in \Pi$, 
respectively, we have $X^\flat = X+\gt(X)$ and $Y^\flat = Y+\gt(Y)$,
where $\gt$ is the Cartan involution defined by $\gt(U) = -U^t$.
A direct computation then concludes the lemma.
\end{proof}

The next step is to choose a realization of $\delta \in \Irr(\wK)$.
As described in Section \ref{subsec:K},
we realize $\Irr(\wK)$ as $\Irr(\wK) \simeq \{(\pi_n, \Pol_n[t]) : n \in \Z_{\geq 0}\}$.
The explicit formulas for the operators $\dpin(X^\flat)$ and $\dpin(Y^\flat)$
are given as follows.

\begin{lem}\label{lem:dpin-rho}
We have 
\begin{equation*}
\dpin(X^\flat)= -\frac{\sqrt{-1}}{2} ((1-t^2)\frac{d}{dt} + nt) \;\;
 \text{and} \;\;
\dpin(Y^\flat)= -\frac{1}{2} ((1+t^2)\frac{d}{dt} - nt).
\end{equation*}
\end{lem}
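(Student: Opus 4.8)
The plan is to compute $\dpin(X^\flat)$ and $\dpin(Y^\flat)$ directly from Lemma~\ref{lem:Kpic-rho} together with the explicit action of $\f{sl}(2,\C)$ on $\Pol_n[t]$ recorded in \eqref{eqn:Epm}. Recall that under the isomorphism $\Omega_\C\colon\fk\stackrel{\sim}{\to}\f{sl}(2,\C)$ of \eqref{eqn:sl2} the elements $Z_+,Z_-,Z_0$ correspond to $E_+,E_-,E_0$, and by convention $\dpin(F)=\dpin(\Omega_\C(F))$ for $F\in\Cal U(\fk)$. So the first step is simply to substitute the formulas of Lemma~\ref{lem:Kpic-rho},
\[
X^\flat=\tfrac{\sqrt{-1}}{2}(Z_++Z_-),\qquad Y^\flat=\tfrac12(Z_+-Z_-),
\]
obtaining $\dpin(X^\flat)=\tfrac{\sqrt{-1}}{2}\big(\dpin(E_+)+\dpin(E_-)\big)$ and $\dpin(Y^\flat)=\tfrac12\big(\dpin(E_+)-\dpin(E_-)\big)$.

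The second step is to insert $\dpin(E_+)=-\tfrac{d}{dt}$ and $\dpin(E_-)=-nt+t^2\tfrac{d}{dt}$ from \eqref{eqn:Epm} and collect terms. For $X^\flat$ this gives
\[
\dpin(X^\flat)=\tfrac{\sqrt{-1}}{2}\Big(-\tfrac{d}{dt}-nt+t^2\tfrac{d}{dt}\Big)
=-\tfrac{\sqrt{-1}}{2}\Big((1-t^2)\tfrac{d}{dt}+nt\Big),
\]
and for $Y^\flat$,
\[
\dpin(Y^\flat)=\tfrac12\Big(-\tfrac{d}{dt}+nt-t^2\tfrac{d}{dt}\Big)
=-\tfrac12\Big((1+t^2)\tfrac{d}{dt}-nt\Big),
\]
which are exactly the claimed formulas. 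Since $\fg_0$ is split real we have $\tau(u^\flat)=u^\flat$ for $u=X,Y$, so these operators are also what appears in $\Sol_{(u)}(n)$ via \eqref{eqn:Soln2}; no extra conjugation needs to be tracked.

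There is no real obstacle here: the entire argument is a two-line linear-algebra substitution followed by simplification. The only point requiring a modicum of care is bookkeeping of the scalars $\sqrt{-1}$ and $\tfrac12$ — in particular remembering that $\Omega$ sends $B_j\mapsto\tfrac12 A_j$ was already absorbed into the definitions in \eqref{eqn:ZH}, \eqref{eqn:sl2}, so that the coefficients $\tfrac{\sqrt{-1}}{2}$ and $\tfrac12$ in Lemma~\ref{lem:Kpic-rho} are passed through unchanged. Thus the proof is just the displayed computation, and I would present it essentially verbatim as above, perhaps abbreviating to: apply $\dpin$ to the expressions for $X^\flat,Y^\flat$ from Lemma~\ref{lem:Kpic-rho}, substitute \eqref{eqn:Epm}, and simplify.
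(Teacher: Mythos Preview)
Your proof is correct and follows essentially the same approach as the paper: the paper's proof simply records $\dpin(Z_\pm)$ from \eqref{eqn:sl2} and \eqref{eqn:Epm} and then invokes Lemma~\ref{lem:Kpic-rho}, which is exactly the substitution-and-simplify argument you carried out explicitly.
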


\begin{proof}
It follows from \eqref{eqn:sl2} and \eqref{eqn:Epm} that
\begin{equation}\label{eqn:Zpm}
\dpin(Z_+) = -\frac{d}{dt} \quad \text{and} \quad 
\dpin(Z_-) = -nt + t^2\frac{d}{dt}. 
\end{equation}
Now the proposed identities follow from Lemma \ref{lem:Kpic-rho}.
\end{proof}

Recall from \eqref{eqn:Soln} and \eqref{eqn:Soln2} that
the space $\Cal{S}ol_{(u; -\rho)}(\sigma)_{\wK}$ 
of $\wK$-finite solutions to $\Cal{D}_u\otimes \mathrm{id}_\sigma$ 
for $u=X,Y$ is decomposed as
\begin{equation*}
\Cal{S}ol_{(u; -\rho)}(\sigma)_{\wK}
\simeq 
\bigoplus_{n =0}^\infty
\Pol_n[t] \otimes 
\mathrm{Hom}_{M}\left(
\Sol_{(u)}(n), \sigma\right),
\end{equation*}
where
\begin{equation*}
\Sol_{(u)}(n)=\{ p(t) \in \Pol_n[t] : d\pi_n(u^{\flat})p(t) = 0\}.
\end{equation*}
We next wish to determine for what $n \in \Z_{\geq 0}$
the solution space $\Sol_{(u)}(n)$ is non-zero.

\begin{prop}\label{prop:SolXandY}
For $u= X, Y$, we have
$\Sol_{(u)}(n) \neq \{0\}$
if and only if  $n \in 2\Z_{\geq0}$.
Moreover, for $n \in 2\Z_{\geq 0}$, we have 
\begin{equation*}
\Sol_{(X)}(n) = \C(1-t^2)^{\frac{n}{2}} \quad \text{and} \quad
\Sol_{(Y)}(n) = \C(1+t^2)^{\frac{n}{2}}.
\end{equation*}
\end{prop}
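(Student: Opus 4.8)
The plan is to convert membership in $\Sol_{(u)}(n)$ into a first-order ordinary differential equation on $\Pol_n[t]$ and then read off its polynomial solutions from a two-step coefficient recursion.

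First I would recall that, since $\fg_0$ is the split real form of $\fg$, we have $\tau(u^\flat)=u^\flat$, so by \eqref{eqn:Soln2} and Lemma \ref{lem:dpin-rho} a polynomial $p(t)=\sum_{k\geq 0}a_kt^k\in\Pol_n[t]$ lies in $\Sol_{(X)}(n)$ if and only if
\[
(1-t^2)p'(t)+nt\,p(t)=0,
\]
and in $\Sol_{(Y)}(n)$ if and only if $(1+t^2)p'(t)-nt\,p(t)=0$. Note already that the claimed generators $(1-t^2)^{n/2}$ and $(1+t^2)^{n/2}$ satisfy these equations formally for every $n$; the content of the proposition is that they are polynomials precisely when $n$ is even and that there are no further solutions.

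Next, for $u=X$ I would expand $(1-t^2)p'+nt\,p$ and collect powers of $t$: the coefficient of $t^0$ equals $a_1$, which forces $a_1=0$, while for $k\geq 1$ the coefficient of $t^k$ gives $(k+1)a_{k+1}+(n-k+1)a_{k-1}=0$. The odd-index recursion together with $a_1=0$ kills all odd coefficients, and the even-index recursion yields $a_{2j}=(-1)^j\binom{n/2}{j}a_0$, so that any solution must equal the formal series $a_0\sum_{j\geq 0}(-1)^j\binom{n/2}{j}t^{2j}=a_0(1-t^2)^{n/2}$. I then split on the parity of $n$: if $n\in 2\Z_{\geq 0}$ then $\binom{n/2}{j}=0$ for $j>n/2$, so $(1-t^2)^{n/2}\in\Pol_n[t]$ and a direct check confirms it solves the equation, giving $\Sol_{(X)}(n)=\C(1-t^2)^{n/2}$; if $n$ is odd then $n/2\notin\Z$, so $\binom{n/2}{j}\neq 0$ for every $j\geq 0$, which is incompatible with $p$ having finitely many nonzero coefficients unless $a_0=0$, i.e.\ $\Sol_{(X)}(n)=\{0\}$.

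Finally I would treat $u=Y$ in exactly the same way: expanding $(1+t^2)p'-nt\,p$ again forces $a_1=0$ and gives the recursion $(k+1)a_{k+1}-(n-k+1)a_{k-1}=0$, hence all odd coefficients vanish and $a_{2j}=\binom{n/2}{j}a_0$, so any solution equals $a_0(1+t^2)^{n/2}$; the same parity dichotomy yields $\Sol_{(Y)}(n)=\C(1+t^2)^{n/2}$ for $n$ even and $\{0\}$ otherwise. (Alternatively, the substitution $t\mapsto\sqrt{-1}\,t$ interchanges the two differential equations up to a nonzero scalar, so the $Y$-case can be deduced from the $X$-case.) The only step requiring a little care is the bookkeeping at the bottom of the recursion — that the vanishing of the $t^0$-coefficient is precisely what forces $a_1=0$, and hence the vanishing of all odd coefficients; everything else is a routine manipulation of binomial coefficients.
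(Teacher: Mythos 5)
Your argument is correct, but it follows a genuinely different route from the paper. The paper's proof of Proposition~\ref{prop:SolXandY} simply integrates the first-order equation $(1-t^2)p'+ntp=0$ by separation of variables, observing directly that any solution must be proportional to $(1-t^2)^{n/2}$, and then notes that this function lies in $\Pol_n[t]$ exactly when $n$ is even. You instead expand $p(t)=\sum a_k t^k$ and extract the two-step coefficient recursion $(k+1)a_{k+1}+(n-k+1)a_{k-1}=0$ together with $a_1=0$, deduce that all odd coefficients vanish and that $a_{2j}=(-1)^j\binom{n/2}{j}a_0$, and then read off the parity dichotomy from the vanishing (or not) of the binomial coefficients. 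The two proofs buy slightly different things: the paper's separation-of-variables argument is shorter and matches the ``identify the ODE, solve it, check the solution is a polynomial'' template used again in Section~\ref{sec:wrho} for the second-order case, whereas your recursion is purely algebraic --- it works verbatim over any field of characteristic zero and in particular exhibits explicitly the obstruction to having a polynomial solution when $n$ is odd, namely the nonvanishing of $\binom{n/2}{j}$ combined with the forced vanishing of $a_{n+1}$. Your remark that $t\mapsto\sqrt{-1}\,t$ interchanges the $X$- and $Y$-equations is a nice shortcut that the paper does not make explicit (it just says the $Y$-case is ``similar''). One small point of care: the recursion for $a_{2j}$ is only imposed for $j$ with $2j+1\le n+1$, so strictly speaking the contradiction in the odd case comes from the equation at $m=n$, which forces $a_{n-1}=0$ and hence $a_0=0$; your phrasing in terms of the formal series having infinitely many nonzero terms conveys the same idea but it is worth noting that the finite set of constraints already suffices.
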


\begin{proof}
We only discuss about $\Sol_{(X)}(n)$;
the assertion for $\Sol_{(Y)}(n)$ can be drawn from a similar argument.
By Lemma \ref{lem:dpin-rho}, it suffices to solve the differential equation
$((1-t^2)\frac{d}{dt} + nt)p(t) = 0$.
By separation of variables, one can easily check that the solution of the differential
equation has to be proportional to $(1-t^2)^{\frac{n}{2}}$. 
The assertion then follows from a simple observation that 
$(1-t^2)^{\frac{n}{2}} \in \Pol_n[t]$ if and only if $n$ is even.
\end{proof}

As in \eqref{eqn:SysSol}, we define
\begin{equation*}
\Sol_{(X,Y)}(n):= \Sol_{(X)}(n) \cap \Sol_{(Y)}(n).
\end{equation*}

\begin{cor}\label{cor:SolXY}
We have
$\Sol_{(X, Y)}(n) \neq \{0\}$
if and only if  $n = 0$.
Moreover we have 
$\Sol_{(X, Y)}(0) =\C \cdot 1$.
\end{cor}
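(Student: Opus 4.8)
The plan is to derive the corollary directly from Proposition \ref{prop:SolXandY}, since $\Sol_{(X,Y)}(n)$ is by definition the intersection $\Sol_{(X)}(n)\cap\Sol_{(Y)}(n)$ inside $\Pol_n[t]$. First I would recall that Proposition \ref{prop:SolXandY} tells us $\Sol_{(X)}(n)=\Sol_{(Y)}(n)=\{0\}$ whenever $n$ is odd, so the intersection is automatically zero in that case; hence it suffices to treat $n\in 2\Z_{\geq 0}$.

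For even $n$, Proposition \ref{prop:SolXandY} gives the two one-dimensional spaces $\Sol_{(X)}(n)=\C(1-t^2)^{n/2}$ and $\Sol_{(Y)}(n)=\C(1+t^2)^{n/2}$. The next step is the elementary observation that the intersection of two one-dimensional subspaces of $\Pol_n[t]$ is non-zero if and only if their spanning vectors are proportional, i.e.\ if and only if $(1-t^2)^{n/2}$ and $(1+t^2)^{n/2}$ differ by a scalar. When $n/2\geq 1$ these polynomials are not proportional --- for instance $(1-t^2)^{n/2}$ vanishes at $t=1$ whereas $(1+t^2)^{n/2}$ does not --- so the intersection is $\{0\}$. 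When $n=0$ both polynomials equal the constant $1$, so $\Sol_{(X,Y)}(0)=\C(1-t^2)^0=\C\cdot 1$.

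There is essentially no obstacle here: the content is entirely contained in Proposition \ref{prop:SolXandY}, and the only additional input is the trivial linear-algebra fact about intersections of lines together with the non-proportionality of $(1-t^2)^{n/2}$ and $(1+t^2)^{n/2}$ for $n/2\geq 1$. If one wanted a self-contained argument avoiding the explicit solutions, one could instead add the two differential equations from Lemma \ref{lem:dpin-rho}: a common solution $p(t)$ satisfies both $((1-t^2)p'+ntp)=0$ and $((1+t^2)p'-ntp)=0$, whose sum gives $2p'=0$ and whose difference gives $-2t^2p'+2ntp=0$, forcing $p'=0$ and then $np=0$, so $p$ is a non-zero constant only when $n=0$. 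Either route yields the claim, and I would present the first since Proposition \ref{prop:SolXandY} is already in hand.
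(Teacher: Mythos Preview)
Your proposal is correct and takes essentially the same approach as the paper, which simply states that the corollary is an immediate consequence of Proposition \ref{prop:SolXandY}. You have spelled out the elementary details (non-proportionality of $(1-t^2)^{n/2}$ and $(1+t^2)^{n/2}$ for $n\geq 2$) that the paper leaves implicit, and your alternative differential-equation argument is a nice bonus but not needed.
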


\begin{proof}
This is an immediate consequence of Proposition \ref{prop:SolXandY}.
\end{proof}

We next show the $\wM$-representation
on $\Sol_{(X)}(n)$, $\Sol_{(Y)}(n)$, and $\Sol_{(X,Y)}(n)$.

\begin{prop}\label{prop:Mrep-rho}
As an $\wM$-representation, we have the following.
\begin{enumerate}
\item[(1)] $n\equiv 0 \pmod 4:$ 
$\Sol_{(X)}(n) \hspace{0.26cm} \simeq \pp$, 
$\Sol_{(Y)}(n) \simeq \pp$.
\item[(2)] $n\equiv 2 \pmod 4:$
$\Sol_{(X)}(n) \hspace{0.26cm} \simeq \pmi$,
$\Sol_{(Y)}(n) \simeq \mip$.
\item[(3)] $n=0:$\hspace{1.53cm}
$\Sol_{(X,Y)}(0)  \simeq \pp$.
\end{enumerate}
\end{prop}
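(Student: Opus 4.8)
The plan is to exploit that, by Proposition~\ref{prop:SolXandY} and Corollary~\ref{cor:SolXY}, each of the spaces $\Sol_{(X)}(n)$ and $\Sol_{(Y)}(n)$ for $n \in 2\Z_{\geq 0}$, as well as $\Sol_{(X,Y)}(0)$, is one-dimensional. By Lemma~\ref{lem:SolM} such a space is an $\wM$-representation, so the $\wM$-action on it is given by a character; in view of \eqref{eqn:char} and the fact that $\mathbb{H}$ is two-dimensional, this character must be one of $\pp, \pmi, \mip, \mm$. It therefore suffices to evaluate the action of the generators $\wm_1, \wm_2, \wm_3$ of $\wM$ on the explicit generator of each space, using the formulas \eqref{eqn:pi-wM}, and to match the result against Table~\ref{table:char} (recall that a character of $Q_8 \simeq \wM$ is determined by its values on $\wm_1, \wm_2, \wm_3$, and that $\pm\wm_j$ give the same value).

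First I would treat $\Sol_{(X)}(n) = \C(1-t^2)^{n/2}$ with $n$ even. Setting $p(t) = (1-t^2)^{n/2}$ and using $p(-t) = p(t)$ and $t^n p(-1/t) = (t^2-1)^{n/2} = (-1)^{n/2} p(t)$, together with $(\sqrt{-1})^n = (-1)^{n/2}$ and $(-\sqrt{-1})^n = (-1)^{n/2}$ for $n$ even, the formulas \eqref{eqn:pi-wM} show that $\wm_1$ and $\wm_2$ act on $\Sol_{(X)}(n)$ by $(-1)^{n/2}$ while $\wm_3$ acts by $(-1)^{n/2}\cdot(-1)^{n/2} = 1$. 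Hence $(\wm_1,\wm_2,\wm_3)$ acts by $(1,1,1)$ when $n \equiv 0 \pmod 4$ and by $(-1,-1,1)$ when $n \equiv 2 \pmod 4$, which by Table~\ref{table:char} is $\pp$ and $\pmi$, respectively. The case $\Sol_{(Y)}(n) = \C(1+t^2)^{n/2}$ is entirely analogous: with $q(t) = (1+t^2)^{n/2}$ one has $q(-t) = q(t)$ and $t^n q(-1/t) = (t^2+1)^{n/2} = q(t)$, so \eqref{eqn:pi-wM} gives that $\wm_1$ and $\wm_3$ act by $(-1)^{n/2}$ and $\wm_2$ acts trivially; thus $(\wm_1,\wm_2,\wm_3)$ acts by $(1,1,1) = \pp$ for $n \equiv 0 \pmod 4$ and by $(-1,1,-1) = \mip$ for $n \equiv 2 \pmod 4$.

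Finally, $\Sol_{(X,Y)}(0)$ is the $n=0$ instance: $\Pol_0[t] = \C$ carries the trivial $\wM$-action, so $\Sol_{(X,Y)}(0) = \C\cdot 1 \simeq \pp$ (equivalently, it is the intersection of $\Sol_{(X)}(0)\simeq\pp$ and $\Sol_{(Y)}(0)\simeq\pp$). There is no real obstacle here; the only point requiring care is the bookkeeping of the parity of $n/2$ and the fourth roots of unity, which all reduces to the monomial identities $t^n(1\mp 1/t^2)^{n/2} = (t^2\mp 1)^{n/2}$ and to $(\pm\sqrt{-1})^2 = -1$.
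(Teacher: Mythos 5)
Your proof is correct and follows exactly the same strategy as the paper: identify the explicit one-dimensional generator from Proposition~\ref{prop:SolXandY} and Corollary~\ref{cor:SolXY}, apply the transformation laws \eqref{eqn:pi-wM} for $\wm_1,\wm_2,\wm_3$, and match the resulting sign pattern against Table~\ref{table:char}. The paper's proof is stated more tersely (it simply says the assertions "follow from a direct observation of the transformation laws"), whereas you have carried out the bookkeeping of $(-1)^{n/2}$ and the fourth roots of unity explicitly; both the intermediate computations (e.g.\ $\wm_2$ scaling $(1-t^2)^{n/2}$ by $(-1)^{n/2}$ and fixing $(1+t^2)^{n/2}$, and $\wm_3$ acting trivially on $(1-t^2)^{n/2}$ but by $(-1)^{n/2}$ on $(1+t^2)^{n/2}$) and the final matches to $\pp,\pmi,\mip$ are accurate.
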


\begin{proof}
For $u=X, Y, (X,Y)$, let $p_{(u)}(t)$ be the polynomial 
determined in Proposition \ref{prop:SolXandY} and Corollary \ref{cor:SolXY}
such that $\Sol_{(u)}(n) = \C p_{(u)}(t)$. 
The assertions then follow from a direct observation of the transformation laws
\eqref{eqn:pi-wM} of $\wm_j \in \wM$ 
on $p_{(u)}(t)$ with Table \ref{table:char}.
\end{proof}

As the last step of the recipe,
we determine $n \in \Z_{\geq 0}$ 
such that $\Hom_{\wM}(\Sol_{(u)}(n), \sigma)\neq \{0\}$
for given $\sigma \in \Irr(\wM)$.

\begin{prop}\label{prop:Krep-rho}
For $u = X, Y, (X,Y)$, the following are equivalent
on $\sigma \in \Irr(\wM)$:
\begin{enumerate}
\item[(i)] $\Hom_{\wM}(\Sol_{(u)}(n), \sigma)\neq \{0\};$
\item[(ii)] $\dim_\C \Hom_{\wM}(\Sol_{(u)}(n), \sigma)=1$.
\end{enumerate}
Further, for each $u = X, Y, (X,Y)$, we have the following.
\begin{enumerate}
\item If $\sigma \neq \pp, \pmi$, then $\Hom_{\wM}(\Sol_{(X)}(n), \sigma) = \{0\}$
for all $n \in \Z_{\geq0}$. Moreover, 
\begin{enumerate}
\item $\Hom_{\wM}(\Sol_{(X)}(n), \pp)\neq \{0\}$ $\iff$ $n \equiv 0 \pmod 4;$
\item $\Hom_{\wM}(\Sol_{(X)}(n), \pmi)
\neq \{0\}$ $\iff$ $n \equiv 2 \pmod 4$.
\end{enumerate}
\vskip 0.1in

\item If $\sigma \neq \pp, \mip$, then $\Hom_{\wM}(\Sol_{(Y)}(n), \sigma) = \{0\}$
for all $n \in \Z_{\geq0}$. Moreover,
\begin{enumerate}
\item $\Hom_{\wM}(\Sol_{(Y)}(n), \pp)\neq \{0\}$ $\iff$ $n \equiv 0 \pmod 4;$
\item $\Hom_{\wM}(\Sol_{(Y)}(n), \mip)
\neq \{0\}$ $\iff$ $n \equiv 2 \pmod 4$.
\end{enumerate}
\vskip 0.1in

\item If $\sigma \neq \pp$, then $\Hom_{\wM}(\Sol_{(X,Y)}(n), \sigma) = \{0\}$
for all $n \in \Z_{\geq0}$. Moreover,
\begin{enumerate}
\item[] $\Hom_{\wM}(\Sol_{(X,Y)}(n), \pp)\neq \{0\} \iff n=0.$
\end{enumerate}
\end{enumerate}
\end{prop}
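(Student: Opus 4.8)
The plan is to obtain all of these assertions directly from Propositions~\ref{prop:SolXandY} and \ref{prop:Mrep-rho} together with Corollary~\ref{cor:SolXY}, invoking only Schur's lemma for the finite group $\wM$. No genuinely new computation should be needed; the role of this proposition is to package the earlier results into a statement about the summands $\Hom_{\wM}(\Sol_{(u)}(n),\sigma)$ appearing in \eqref{eqn:Soln}.

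First I would establish the equivalence of (i) and (ii) uniformly in $u$ and $\sigma$. By Proposition~\ref{prop:SolXandY} (for $u=X,Y$) and Corollary~\ref{cor:SolXY} (for $u=(X,Y)$), the space $\Sol_{(u)}(n)$ is either $\{0\}$ or one-dimensional, and in the latter case Proposition~\ref{prop:Mrep-rho} identifies it, as an $\wM$-module, with one of the characters $\pp$, $\pmi$, $\mip$. Consequently, for any $\sigma\in\Irr(\wM)$ the space $\Hom_{\wM}(\Sol_{(u)}(n),\sigma)$ vanishes unless $\Sol_{(u)}(n)$ is one-dimensional and $\sigma$ is precisely the corresponding character, in which case it is one-dimensional by Schur's lemma. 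In particular $\Hom_{\wM}(\Sol_{(u)}(n),\mathbb{H})=\{0\}$ for every $n$, since $\mathbb{H}$ is irreducible of dimension~$2$ while $\Sol_{(u)}(n)$ carries at most a character. This shows $\dim_\C\Hom_{\wM}(\Sol_{(u)}(n),\sigma)\leq 1$ in all cases, hence (i)$\iff$(ii).

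For the explicit classification I would simply match the congruence classes to the characters recorded in Proposition~\ref{prop:Mrep-rho}. For $u=X$: since $\Sol_{(X)}(n)\neq\{0\}$ only when $n$ is even, with $\Sol_{(X)}(n)\simeq\pp$ when $n\equiv 0\pmod 4$ and $\Sol_{(X)}(n)\simeq\pmi$ when $n\equiv 2\pmod 4$, the space $\Hom_{\wM}(\Sol_{(X)}(n),\sigma)$ is nonzero only for $\sigma=\pp$ (exactly when $n\equiv 0\pmod 4$) or $\sigma=\pmi$ (exactly when $n\equiv 2\pmod 4$), and for every other $\sigma$ it vanishes for all $n$. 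The case $u=Y$ is identical with $\mip$ in place of $\pmi$. For $u=(X,Y)$, Corollary~\ref{cor:SolXY} gives $\Sol_{(X,Y)}(n)\neq\{0\}$ only for $n=0$, with $\Sol_{(X,Y)}(0)=\C\cdot 1\simeq\pp$, so $\Hom_{\wM}(\Sol_{(X,Y)}(n),\sigma)\neq\{0\}$ if and only if $n=0$ and $\sigma=\pp$.

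I do not anticipate any real obstacle: the substantive content already lies in the cited results, and the only care required is the bookkeeping of excluding the odd values of $n$ (where $\Sol_{(u)}(n)=\{0\}$) and the two-dimensional representation $\mathbb{H}$, and of pairing the residues $0$ and $2$ modulo~$4$ with the correct characters as in Proposition~\ref{prop:Mrep-rho}.
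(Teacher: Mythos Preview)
Your proposal is correct and follows the same approach as the paper, which simply states that the assertions easily follow from Proposition~\ref{prop:SolXandY}, Corollary~\ref{cor:SolXY}, and Proposition~\ref{prop:Mrep-rho}. You have merely made the routine Schur-lemma bookkeeping explicit.
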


\begin{proof}
The assertions easily follow from
Proposition \ref{prop:SolXandY}, Corollary \ref{cor:SolXY}, and 
Proposition \ref{prop:Mrep-rho}.
\end{proof}

Here is a summary of the results that we have obtained in this section.

\begin{thm}\label{thm:XandY}
For $\sigma \in \Irr(\wM)$, the following hold.
\begin{enumerate}
\item
$\Cal{S}ol_{(X;-\rho)}(\sigma)_{\wK} \hspace{0.3cm}
\neq \{0\}  \iff \sigma = \pp, \pmi$.
\item
$\Cal{S}ol_{(Y;-\rho)}(\sigma)_{\wK} 
 \hspace{0.33cm}
\neq \{0\} \iff \sigma = \pp, \mip$.
\item
$\Cal{S}ol_{(X,Y;-\rho)}(\sigma)_{\wK} \neq \{0\} \iff \sigma = \pp$.
\end{enumerate}
Moreover, for $\sigma \in \Irr(\wM)$ 
such that $\Cal{S}ol _{(u;-\rho)}(\sigma)\neq \{0\}$,
the $\wK$-type formula of 
$\Cal{S}ol _{(u;-\rho)}(\sigma)_{\wK}$ is determined as follows.
\begin{enumerate}
\item[(a)] $u=X:$\hspace{0.75cm}
$\displaystyle{\Cal{S}ol _{(X;-\rho)}(\pp)_{\wK} 
\hspace{0.27cm}
\simeq \bigoplus_{\ell=0}^\infty \Pol_{4\ell}}[t]$ 
\; and \;
$\displaystyle{\Cal{S}ol _{(X;-\rho)}(\pmi)_{\wK} 
\simeq \bigoplus_{\ell=0}^\infty \Pol_{4\ell+2}}[t]$.
\vskip 0.05in

\item[(b)] $u=Y:$\hspace{0.8cm}
$\displaystyle{\Cal{S}ol _{(Y;-\rho)}(\pp)_{\wK} 
\hspace{0.3cm}
\simeq \bigoplus_{\ell=0}^\infty \Pol_{4\ell}}[t]$ 
\; and \;
$\displaystyle{\Cal{S}ol _{(Y;-\rho)}(\mip)_{\wK} 
\simeq \bigoplus_{\ell=0}^\infty \Pol_{4\ell+2}}[t]$.
\vskip 0.05in

\item[(c)] $u=(X,Y):$
$\displaystyle{\Cal{S}ol _{(X, Y;-\rho)}(\pp)_{\wK} \simeq \Pol_{0}[t]}$.
\end{enumerate}
\end{thm}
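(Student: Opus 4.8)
The plan is to assemble Theorem~\ref{thm:XandY} from the ingredients just developed, treating the three statements about nonvanishing and the three $\wK$-type formulas as a single bookkeeping exercise built on the isomorphism \eqref{eqn:Soln}. First I would recall that by Corollary~\ref{cor:Frob3} together with the polynomial realization $\Irr(\wK)\simeq\{(\pi_n,\Pol_n[t]):n\in\Z_{\geq0}\}$, the solution space decomposes as
\begin{equation*}
\Cal{S}ol_{(u;-\rho)}(\sigma)_{\wK}\simeq\bigoplus_{n=0}^\infty\Pol_n[t]\otimes\Hom_{\wM}(\Sol_{(u)}(n),\sigma),
\end{equation*}
for $u=X,Y$, and the analogous formula with $\Sol_{(X,Y)}(n)=\Sol_{(X)}(n)\cap\Sol_{(Y)}(n)$ for the common solution space $\Cal{S}ol_{(X,Y;-\rho)}(\sigma)_{\wK}$, as recorded at the end of Section~\ref{subsec:sol}. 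Everything then reduces to reading off the multiplicity spaces $\Hom_{\wM}(\Sol_{(u)}(n),\sigma)$, which is exactly the content of Proposition~\ref{prop:Krep-rho}.

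For the nonvanishing equivalences (1)--(3), the $\wK$-module $\Cal{S}ol_{(u;-\rho)}(\sigma)_{\wK}$ is nonzero if and only if $\Hom_{\wM}(\Sol_{(u)}(n),\sigma)\neq\{0\}$ for at least one $n$. For $u=X$, Proposition~\ref{prop:Krep-rho}(1) says this forces $\sigma\in\{\pp,\pmi\}$, and conversely that $\Hom_{\wM}(\Sol_{(X)}(n),\pp)\neq\{0\}$ for $n\equiv0\pmod4$ and $\Hom_{\wM}(\Sol_{(X)}(n),\pmi)\neq\{0\}$ for $n\equiv2\pmod4$, so both values actually occur; this gives (1). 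Statement (2) for $u=Y$ is identical with $\pmi$ replaced by $\mip$, using Proposition~\ref{prop:Krep-rho}(2). Statement (3) follows the same way from Proposition~\ref{prop:Krep-rho}(3): only $\sigma=\pp$ survives, realized at $n=0$ via $\Sol_{(X,Y)}(0)=\C\cdot1$ (Corollary~\ref{cor:SolXY}).

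For the explicit $\wK$-type formulas (a)--(c), I would simply substitute into the displayed decomposition. In case (a) with $\sigma=\pp$: the only $n$ contributing are those with $\Hom_{\wM}(\Sol_{(X)}(n),\pp)\neq\{0\}$, namely $n\equiv0\pmod4$, i.e.\ $n=4\ell$ with $\ell\geq0$; since each such $\Hom$ space is one-dimensional (Proposition~\ref{prop:Krep-rho}(i)$\Leftrightarrow$(ii)), the summand is $\Pol_{4\ell}[t]\otimes\C\simeq\Pol_{4\ell}[t]$, giving $\bigoplus_{\ell=0}^\infty\Pol_{4\ell}[t]$. With $\sigma=\pmi$ the contributing $n$ are $n\equiv2\pmod4$, i.e.\ $n=4\ell+2$, yielding $\bigoplus_{\ell=0}^\infty\Pol_{4\ell+2}[t]$. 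Case (b) is word-for-word the same with $Y$ and $\mip$ in place of $X$ and $\pmi$. Case (c): only $n=0$ contributes and $\Hom_{\wM}(\Sol_{(X,Y)}(0),\pp)$ is one-dimensional, so the sum collapses to $\Pol_0[t]$.

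There is no genuine obstacle here; the theorem is a direct corollary of Propositions~\ref{prop:SolXandY}, \ref{prop:Mrep-rho}, \ref{prop:Krep-rho} and Corollary~\ref{cor:SolXY} via \eqref{eqn:Soln}. If anything, the one point demanding a word of care is making sure the multiplicity-one statement is invoked so that each $\Pol_n[t]\otimes\Hom_{\wM}(\cdots)$ collapses to a single copy of $\Pol_n[t]$ rather than several — this is handled by the equivalence (i)$\Leftrightarrow$(ii) in Proposition~\ref{prop:Krep-rho}. One could also remark, though it is not needed for the statement as written, that the spaces $\Cal{S}ol_{(X;-\rho)}(\pp)_{\wK}$ and $\Cal{S}ol_{(Y;-\rho)}(\pp)_{\wK}$ have identical $\wK$-type formulas yet differ as $(\fg,\wK)$-modules, as already noted in the introduction; I would keep the proof itself to the bookkeeping above and relegate such remarks to the surrounding text.
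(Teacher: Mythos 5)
Your argument is correct and follows the same route as the paper: read off the $\wK$-type decomposition from \eqref{eqn:Soln}, apply Proposition~\ref{prop:Krep-rho} (and Corollary~\ref{cor:SolXY} in the common-solution case) to determine which $n$ contribute and with what multiplicity, then collect terms. The paper's proof treats only $\Cal{S}ol_{(X;-\rho)}(\pp)_{\wK}$ in detail and notes the remaining cases are similar, whereas you write them all out; the underlying reasoning is the same.
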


\begin{proof}
We only demonstrate the proof for 
$\Cal{S}ol _{(X;-\rho)}(\pp)_{\wK}$; the other cases can be shown similarly.
By \eqref{eqn:Soln}, we have
\begin{equation*}
\Cal{S}ol_{(X; -\rho)}(\sigma)_{\wK}
\simeq 
\bigoplus_{n =0}^\infty
\Pol_n[t] \otimes 
\mathrm{Hom}_{\wM}\left(
\Sol_{(X)}(n), \sigma\right).
\end{equation*}
It then follows from Proposition \ref{prop:Krep-rho} that
$\Cal{S}ol_{(X; -\rho)}(\sigma)_{\wK} = \{0\}$ 
for $\sigma \neq \pp, \pmi$
and, for $\sigma = \pp, \pmi$, we have
\begin{equation*}
\Cal{S}ol_{(X; -\rho)}(\pp)_{\wK}
\simeq 
\bigoplus_{n \equiv 0 \hspace{-0.1in}  \pmod 4 }
\Pol_n[t] 
\quad
\text{and}
\quad
\Cal{S}ol_{(X; -\rho)}(\pmi)_{\wK}
\simeq 
\bigoplus_{n \equiv 2 \hspace{-0.1in}  \pmod 4 }
\Pol_n[t].
\end{equation*}
Now the assertions follow.
\end{proof}

We now give a proof of Theorem \ref{thm:Ktype-rho}.

\begin{proof}[Proof of Theorem \ref{thm:Ktype-rho}]
Since
$\Cal{S}ol_{(u; -\rho)}(\sigma)_{\wK}$ is dense in
$\Cal{S}ol_{(u; -\rho)}(\sigma)$, we have
$\Cal{S}ol_{(u; -\rho)}(\sigma)\neq \{0\}$
if and only if $\Cal{S}ol_{(u; -\rho)}(\sigma)_{\wK}\neq \{0\}$.
As $(\pi_n, \Pol_n[t]) \simeq V_{(\frac{n}{2})}$,
the assertions follow from Theorem \ref{thm:XandY}.
\end{proof}

\section{The case of infinitesimal character $\wrho$}
\label{sec:wrho}

The aim of this section is to determine the $\wK$-type formula
of the solution space $\Cal{S}ol_{(u;\lambda)}(\sigma)$ 
for $\lambda=-(1/2)\rho$. 
This is achieved in  Theorem \ref{thm:XcY1}.
For the rest of this section we write $\wrho=(1/2)\rho$.

\subsection{Classification and construction of intertwining differential operators}
\label{subsec:IDO-wrho1}

As for the case of $\rho$,
we start by classifying $\nu \in  \fa^*$ with $\nu \neq \wrho$ such that 
$\Hom_{\fg}(M(-\nu), M(\wrho))\neq \{0\}$.

\begin{lem}
The following are equivalent on $\nu \in \fa^*$ with $\nu\neq \wrho$.
\begin{enumerate}
\item[(i)] $\Hom_\fg(M(-\nu), M(\wrho))\neq \{0\}$.
\item[(ii)] $\nu =  \wrho$.
\end{enumerate}
\end{lem}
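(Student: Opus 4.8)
The plan is to follow the template of the proof of Lemma~\ref{lem:rho-map1}: by the duality \eqref{eqn:duality3} and the BGG--Verma theorem (Theorem~\ref{thm:BGGV}), $\Hom_\fg(M(-\nu), M(\wrho)) \neq \{0\}$ precisely when there is a sequence $(\gb_1, \dots, \gb_t)$ of positive roots linking $\wrho$ to $-\nu$ in the sense of Definition~\ref{def:Link}. The key feature is that, unlike the integral weight $\rho$, the weight $\wrho = \tfrac12\rho$ pairs non-integrally with the simple coroots, so the linkage class of $\wrho$ is far smaller than that of $\rho$; I expect it to consist only of $\wrho$ itself and $-\wrho$, the latter accounting for the unique nontrivial map $M(-\wrho)\to M(\wrho)$, which is the second-order map underlying $\Cal{D}_{\XcY}$.

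Concretely, for $\fg = \f{sl}(3,\C)$ with $\Delta^+ = \{\ga, \gb, \ga+\gb\}$ and $\rho = \ga+\gb$, I would first record $\IP{\wrho}{\cga} = \IP{\wrho}{\cgb} = \tfrac12 \notin \Z_{\geq 0}$ while $\IP{\wrho}{(\ga+\gb)^\ssv} = 1$. By condition~(2) of Definition~\ref{def:Link}, any linking sequence with $\lambda_0 = \wrho$ must therefore begin with $\gb_1 = \ga+\gb$, and this produces $\lambda_1 = s_{\ga+\gb}(\wrho) = \wrho - (\ga+\gb) = -\wrho$. I would then check that the sequence cannot be prolonged, since $\IP{-\wrho}{\cga} = \IP{-\wrho}{\cgb} = -\tfrac12$ and $\IP{-\wrho}{(\ga+\gb)^\ssv} = -1$ are all negative. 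Hence the only weights linked to $\wrho$ are $\wrho$ (empty sequence) and $-\wrho$ (the sequence $(\ga+\gb)$), so by Theorem~\ref{thm:BGGV} we get $\Hom_\fg(M(-\nu), M(\wrho)) \neq \{0\}$ if and only if $-\nu \in \{\wrho, -\wrho\}$, i.e.\ $\nu \in \{\pm\wrho\}$.

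Among these, $\nu = -\wrho$ gives $M(-\nu) = M(\wrho)$ with hom space $\C\cdot\mathrm{id}$, so after discarding the identity the only remaining value is $\nu = \wrho$, which proves the lemma; the associated map is generated by a degree-two singular vector of $M(\wrho)$ in $\Cal{U}(\bar\fn)$, consistent with $\Cal{D}_{\XcY}$ being second order. The computation is routine; the only thing requiring care is the sign bookkeeping --- remembering that $M(\wrho)$ is the target, so that it is $\wrho$ that gets reflected in forming the links, and that the excluded value of $\nu$ is precisely the one yielding the identity map. I do not foresee any genuine obstacle.
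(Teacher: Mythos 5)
Your proof is correct and follows the same BGG--Verma linkage computation that underlies the paper's one-line proof. You have also rightly observed that the value to be excluded in the hypothesis should be $\nu = -\wrho$ (which yields the identity $M(\wrho)\to M(\wrho)$), not $\nu = \wrho$; the stated condition ``$\nu \neq \wrho$'' carries the same sign slip as in Lemma~\ref{lem:rho-map1}, and your explicit computation of the linkage class $\{\wrho, -\wrho\}$ confirms the intended content.
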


\begin{proof}
A simple observation of the BGG--Verma theorem (Theorem \ref{thm:BGGV}) 
for $\lambda = \wrho$.
\end{proof}

The next step is to construct a non-zero homomorphism 
$\varphi_{(-\wrho, \wrho)} \in \Hom_{\fg}(M(-\wrho), M(\wrho))$.
To do so, we first prepare some notation.
Let $\Cal{U}_r(\bar{\fn})$
be the subspace of $\Cal{U}(\bar{\fn})$ 
that is spanned by at most $r$ elements of $\bar{\fn}$.
Also, let $S^r(\bar{\fn})$ denote the subspace of the symmetric algebra 
$S(\bar{\fn})$
of $\bar{\fn}$ spanned by $r$ elements of $\bar{\fn}$. 
Then we have
\begin{equation}\label{eqn:sym}
\Cal{U}_r(\bar{\fn}) =\sym^r(S^r(\bar{\fn})) \oplus \Cal{U}_{r-1}(\bar{\fn}),
\end{equation}
where $\sym^r \colon S^r(\bar{\fn}) \to \Cal{U}_r(\bar{\fn})$ is the symmetrization map.
Recall from \eqref{eqn:XY} that $X$ and $Y$ are root vectors of $-\ga$ and $-\gb$,
respectively.
With the notation we show the following.

\begin{lem}\label{lem:map-wrho}
Up to scalar multiple,
the map $\varphi_{(-\wrho, \wrho)} \in \Hom_{\fg}(M(-\wrho), M(\wrho))$
is given by
\begin{equation*}
1\otimes \mathbb{1}_{-(\wrho+\rho)}
\longmapsto (XY+YX)\otimes \mathbb{1}_{-\wrho}.
\end{equation*}
\end{lem}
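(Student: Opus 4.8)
The statement to prove is Lemma~\ref{lem:map-wrho}: up to scalar, the unique (by the BGG--Verma theorem and $\dim_\C\Hom_\fg(M(-\wrho),M(\wrho))\le 1$) homomorphism $\varphi_{(-\wrho,\wrho)}$ sends $1\otimes\mathbb{1}_{-(\wrho+\rho)}$ to $(XY+YX)\otimes\mathbb{1}_{-\wrho}$. By the discussion in Section~\ref{subsec:Verma2}, this amounts to verifying that the weight-$(-\wrho-\rho)$ element $v:=(XY+YX)\otimes\mathbb{1}_{-\wrho} \in M(\wrho)$ is a singular vector, i.e.\ that conditions (C1) and (C2) hold. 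Condition (C1) is immediate: $X$ has weight $-\ga$ and $Y$ has weight $-\gb$, so $XY$ and $YX$ both have weight $-\ga-\gb = -\rho$ (recall $\fg=\f{sl}(3,\C)$ has $\rho=\ga+\gb$ in this normalization), and we need the target weight $-\wrho-(-\wrho+\rho)$... more precisely we need $\nu-\lambda = (-\wrho)-(\wrho) = -\rho$, which matches. So the content is condition (C2): $E_\ga\cdot v = 0$ and $E_\gb\cdot v = 0$ in $M(\wrho)$, where $E_\ga, E_\gb$ are the simple root vectors in $\fn$ (with $\fg_\ga=\C E_{12}$, $\fg_\gb = \C E_{23}$, so $E_\ga = E_{12}$, $E_\gb = E_{23}$).

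First I would fix the $\f{sl}(3)$ bracket relations among $E_\ga = E_{12}$, $E_\gb = E_{23}$, $X = E_{21}$, $Y = E_{32}$, and the Cartan elements: $[E_\ga, X] = E_{11}-E_{22} =: H_\ga$, $[E_\gb,Y] = E_{22}-E_{33} =: H_\gb$, $[E_\ga, Y] = 0$, $[E_\gb, X] = 0$, $[X,Y] = E_{31}$, $[E_\ga, [X,Y]] = [E_{12},E_{31}] = -E_{32} = -Y$ (sign to be tracked carefully), and similarly $[E_\gb,[X,Y]] = E_{21} = X$. Then I would compute $E_\ga\cdot(XY\otimes\mathbb{1}) + E_\ga\cdot(YX\otimes\mathbb{1})$ by moving $E_\ga$ to the right past $X$ and $Y$ using these brackets, landing on the highest weight vector $\mathbb{1}_{-\wrho}$ on which $\fn$ acts by $0$ and $\fh$ acts by the weight $\wrho - \rho$ (since $M(\wrho) = \Cal{U}(\fg)\otimes_{\Cal{U}(\fb)}\C_{\wrho-\rho}$). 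The key numeric input is the value of $\langle \wrho-\rho, H_\ga^\ssv\rangle$ and $\langle\wrho-\rho,H_\gb^\ssv\rangle$: with $\wrho = \tfrac12\rho$ and $\rho = \ga+\gb$, one gets $\wrho-\rho = -\tfrac12(\ga+\gb)$, so $\langle\wrho-\rho,\cga\rangle = -\tfrac12\langle\ga+\gb,\cga\rangle = -\tfrac12(2-1) = -\tfrac12$, and likewise $-\tfrac12$ on $\cgb$. The vanishing $E_\ga v = 0$ should come out as a small linear combination of $\mathbb{1}$-terms whose coefficient is (up to normalization) $2\langle\wrho-\rho,\cga\rangle + 1 = 0$, which is exactly why $\wrho$ is the special value; symmetrically for $E_\gb$.

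The main obstacle — really the only delicate point — is bookkeeping the signs and the order of the straightening computation: $E_\ga$ does not commute with $Y$ directly but $[E_\ga,[X,Y]]$ is nonzero, so expanding $E_\ga(XYX^{0}\cdots)$ requires care, and the cross-terms from $XY$ versus $YX$ must cancel against each other as well as against the Cartan contributions. I would organize this by writing $XY + YX = 2XY - [X,Y] = 2XY - E_{31}$ (or symmetrically $2YX + E_{31}$), whichever makes one of the two simple-root checks shorter, and then do the other check by the symmetric computation. After both singular-vector conditions are verified, Proposition/Theorem~\ref{thm:BGGV} together with $\dim_\C\Hom\le 1$ forces $\varphi_{(-\wrho,\wrho)}$ to be exactly this map up to scalar, completing the proof. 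As a sanity check I would also note $(XY+YX)\otimes\mathbb{1}$ is manifestly nonzero in $M(\wrho)$ (it is a nonzero element of $\Cal{U}_2(\bar\fn)\otimes\mathbb{1}$, visible already in $\sym^2(S^2(\bar\fn))$ via \eqref{eqn:sym}), so the homomorphism is genuinely nonzero.
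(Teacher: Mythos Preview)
Your proposal is correct and follows essentially the same approach as the paper: verify conditions (C1) and (C2) of Section~\ref{subsec:Verma2} for the element $(XY+YX)\otimes\mathbb{1}_{-\wrho}$, then invoke $\dim_\C\Hom_\fg(M(-\wrho),M(\wrho))\le 1$ for uniqueness. The paper leaves (C2) as ``a direct computation''; you spell out the commutator bookkeeping and correctly isolate the numerical reason the verification succeeds, namely $1+2\langle\wrho-\rho,\cga\rangle = 0$ (and likewise for $\gb$), which indeed is exactly what falls out when one moves $E_\ga$ past $XY$ and $YX$ separately and adds.
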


\begin{proof}
It is first remarked that, in contrast to Lemma \ref{lem:maps2},
we cannot apply Proposition \ref{prop:map} in the present case,
as $-\wrho \neq s_{\gamma}(\wrho)$ for $\gamma = \ga, \gb$.
We thus consider the conditions (C1) and (C2) in Section \ref{subsec:Verma2}.
Let $u_0 \otimes \mathbb{1}_{-\wrho}$ be the image of 
$1\otimes \mathbb{1}_{-(\wrho + \rho)}$ under the map
$\varphi_{(-\wrho, \wrho)}\in \Hom_{\fg}(M(-\wrho), M(\wrho))$.
Suppose that $u_0$ satisfies the condition (C1), namely,
$u_0$ has weight $-\rho=-\ga-\gb$. Then $u_0$ is a linear combination
of $XY$ and $YX$; in particular, 
$u_0 \in \Cal{U}_2(\bar{\fn})$.
By \eqref{eqn:sym}, this implies that 
$u_0 \in \C(XY+YX)$. 
A direct computation shows that 
$(XY+YX)\otimes  \mathbb{1}_{-\wrho}$ 
satisfies the condition (C2), that is, 
it is annihilated by $X_{\ga}$ and $X_{\gb}$ in $M(\wrho)$.
(Here one may take $X_{\ga}$ and $X_{\gb}$ to be $X_{\ga}=E_{12}$ and $X_{\gb}=E_{23}$,
as $\fg_\ga=\C E_{12}$ and $\fg_\gb = \C E_{23}$; see Section \ref{subsec:prelim}.)
Now the lemma follows.
\end{proof}

We next determine $\wchi\in \Irr(\wM)_{\text{char}}$
such that $\Hom_{\fg, \wB}(M(\wchi^{-1}, -\wrho), M(\pp, \wrho)) \neq \{0\}$.

\begin{lem}\label{lem:char-wrho}
The following conditions on $\wchi \in \Irr(\wM)_{\mathrm{char}}$ are equivalent.
\begin{enumerate}
\item[(i)]  $\Hom_{\fg, \wB}(M(\wchi^{-1}, -\wrho), M(\pp, \wrho)) \neq \{0\}$.
\item[(ii)] $\wchi=\mm$.
\end{enumerate} 
\end{lem}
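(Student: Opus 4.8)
The statement to establish is that, among the characters $\wchi \in \Irr(\wM)_{\mathrm{char}}$, the only one for which $\Hom_{\fg, \wB}(M(\wchi^{-1}, -\wrho), M(\pp, \wrho)) \neq \{0\}$ is $\wchi = \mm$. By Lemma \ref{lem:map-wrho} we already know that, as an abstract $\fg$-module map, $\Hom_{\fg}(M(-\wrho), M(\wrho))$ is one-dimensional and is given by $1\otimes \mathbb{1}_{-(\wrho+\rho)} \mapsto \bar{u}_0 \otimes \mathbb{1}_{-\wrho}$ with $\bar{u}_0 = XY + YX \in \Cal{U}_2(\bar\fn)$. The recipe in Section \ref{subsec:recipe2} (Step H3) then tells us exactly what to do: the character $\wchi^{-1}$ we are after is the character by which $\wM$ acts, via the adjoint action $\Ad$, on the line $\C \bar{u}_0$. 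So the whole proof reduces to computing this adjoint action.

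First I would note, exactly as in the proof of Lemma \ref{lem:char-rho}, that the adjoint action of $\wM$ on $\Cal{U}(\fg)$ (and hence on $\C \bar{u}_0 \subset \Cal{U}(\bar\fn)$) factors through the linear quotient $M \subset SL(3,\R)$, so it suffices to compute $\Ad(m_j)(XY+YX)$ for $m_j \in M$, $j = 0,1,2,3$, using the explicit diagonal matrices $m_j = \mathrm{diag}(\pm 1, \pm 1, \pm 1)$ from Section \ref{subsec:M}. Since $X = E_{21}$ is a root vector for $-\ga$ with $\fg_\ga = \C E_{12}$, and $Y = E_{32}$ is a root vector for $-\gb$ with $\fg_\gb = \C E_{23}$, conjugation by a diagonal $m = \mathrm{diag}(a_1,a_2,a_3)$ scales $X$ by $a_2/a_1$ and $Y$ by $a_3/a_2$, hence scales the product $XY + YX$ by $(a_2/a_1)(a_3/a_2) = a_3/a_1$. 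Reading off the three diagonal entries of each $m_j$ from Section \ref{subsec:M}: $m_0$ gives $+1$, $m_1 = \mathrm{diag}(-1,1,-1)$ gives $(-1)/(-1) = +1$, $m_2 = \mathrm{diag}(1,-1,-1)$ gives $(-1)/1 = -1$, and $m_3 = \mathrm{diag}(-1,-1,1)$ gives $1/(-1) = -1$. Comparing with Table \ref{table:char}, the character $m_j \mapsto a_3/a_1$ — equivalently $\chi_{(-,-)}$, since $|a_1|_- |a_3|_- = a_1 a_3$ has the same sign pattern as $a_3/a_1$ on $M$ — is precisely $\wchi_{(-,-)} = \mm$. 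Because $\mm$ is a character of order $2$, we have $\mm^{-1} = \mm$, so the condition $\Hom_{\fg,\wB}(M(\wchi^{-1}, -\wrho), M(\pp,\wrho)) \neq \{0\}$ holds exactly when $\wchi = \mm$.

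Finally I would explain why no \emph{other} character can occur: since $\dim_\C \Hom_{\fg}(M(-\wrho), M(\wrho)) = 1$ (this follows from $\dim_\C \Hom_{\fg}(M(\nu),M(\lambda)) \leq 1$, recalled in Section \ref{subsec:Verma2}, together with the fact that the space is nonzero by Lemma \ref{lem:map-wrho}), any nonzero $(\fg,\wB)$-homomorphism out of $M(\wchi^{-1}, -\wrho)$ must, after restriction to $\fg$, be a scalar multiple of $\varphi_{(-\wrho,\wrho)}$; the $MA$-equivariance of that scalar multiple forces $\wM$ to act on $\C \bar u_0$ by $\wchi^{-1}$, i.e. forces $\wchi = \mm$ by the computation above. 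Conversely, for $\wchi = \mm$, the same $\bar u_0$ defines an honest $(\fg,\wB)$-map by that very computation together with Lemma \ref{lem:map-wrho} and the identification \eqref{eqn:n}. The one point to be slightly careful about is the bookkeeping of inverses and of the $\fg_0$-conjugation $\tau$ when passing between the ``$M(\wchi^{-1}, \cdot)$'' normalization used in Section \ref{subsec:recipe2} and the transformation law of $\bar u_0$; but since every relevant character of $\wM$ here has order dividing $2$, $\wchi = \wchi^{-1}$ throughout and no sign is actually lost. I expect this inverse/normalization bookkeeping to be the only place where care is needed — the algebra itself is a one-line diagonal conjugation computation.
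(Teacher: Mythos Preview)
Your proposal is correct and follows exactly the approach the paper indicates: the paper's own proof simply says ``since the proof is similar to the one for Lemma~\ref{lem:char-rho}, we omit the proof,'' and that lemma's proof is precisely the computation of the adjoint action of $M$ (through which $\wM$ factors) on $\C\bar u_0$. You have filled in the omitted direct computation correctly, and your additional remark that the one-dimensionality of $\Hom_{\fg}(M(-\wrho),M(\wrho))$ forces uniqueness of $\wchi$ makes the ``only if'' direction explicit.
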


\begin{proof}
Since the proof is similar to the one for Lemma \ref{lem:char-rho},
we omit the proof.
\end{proof}

For simplicity we write 
\begin{equation}\label{eqn:XcY}
\XcY =XY+YX,
\end{equation}
so that $\D_{\XcY} = R(X)R(Y)+R(Y)R(X)$.

\begin{lem} 
We have
$\Diff_{\wG}\left(I(\pp, -\wrho), I(\mm, \wrho)\right) =\C \D_{\XcY}$.
\end{lem}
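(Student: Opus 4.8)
The plan is to combine the duality theorem (Theorem~\ref{thm:duality}) with the classification results established in Lemmas~\ref{lem:map-wrho} and~\ref{lem:char-wrho}. First I would invoke the general dimension bound: by \cite[Thm.~1.1]{Lepowsky76} and the remarks in Section~\ref{subsec:recipe1}, we have $\dim_\C \Diff_{\wG}(I(\pp,-\wrho), I(\mm,\wrho)) \leq 1$, since this space is isomorphic under the duality theorem to $\Hom_{\fg,\wB}(M(\mm^{-1},\wrho), M(\pp,-\wrho))$, which embeds into $\Hom_\fg(M(\wrho), M(-\wrho))$ over just the Lie algebra, and the latter is at most one-dimensional by the standard fact that a Verma module contains at most one copy of another.

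Next I would show this bound is attained, i.e. that $\D_{\XcY}$ is a nonzero element of the space. By Lemma~\ref{lem:map-wrho}, the element $(XY+YX)\otimes \mathbb{1}_{-\wrho}$ is a singular vector in $M(\wrho)$ (relative to the normalization there, or equivalently in $M(-\wrho)$ after the appropriate shift) of the correct weight $-\rho$, so it defines a nonzero $\fg$-homomorphism $\varphi_{(-\wrho,\wrho)}$. By Lemma~\ref{lem:char-wrho}, the subgroup $\wM$ acts on $\C(XY+YX)$ through the adjoint action by the character $\mm$; hence $\varphi_{(-\wrho,\wrho)}$ is in fact a $(\fg,\wB)$-homomorphism $M(\mm^{-1},-\wrho) \to M(\pp, -\wrho)$ (matching the target parameter $I(\mm,\wrho)$ via the duality between parameters $-\nu$ and $\nu$). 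Applying the explicit formula \eqref{eqn:duality2} of the duality theorem to this $\varphi$ then yields precisely the differential operator $R(X)R(Y)+R(Y)R(X) = \D_{\XcY}$, which is nonzero as a differential operator. Therefore $\Diff_{\wG}(I(\pp,-\wrho), I(\mm,\wrho)) = \C\D_{\XcY}$.

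I expect no real obstacle here: this lemma is the exact analogue of Lemma~\ref{lem:diff-rho} and follows formally from the two preceding lemmas together with the duality theorem, just as stated in the proof of that earlier result. The only point requiring a moment of care is bookkeeping the sign/parameter conventions (the $-\nu$ versus $\nu$ shift and the $\rho$-shift in the definition of the Verma modules $M(\sigma,\lambda) = \Cal{U}(\fg)\otimes_{\Cal{U}(\fb)}(\sigma\otimes(\lambda-\rho))$), so that the character $\mm$ and the weight $\wrho$ land in the correct slots of $\Diff_{\wG}(I(\pp,-\wrho), I(\mm,\wrho))$; but this is entirely routine given the setup in Sections~\ref{subsec:Verma0} and~\ref{subsec:recipe2}. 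So the proof is a one-line consequence: "This is an immediate consequence of Lemmas~\ref{lem:map-wrho} and~\ref{lem:char-wrho} and the duality theorem."
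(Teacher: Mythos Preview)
Your proposal is correct and takes essentially the same approach as the paper: the paper's proof reads verbatim ``This is an immediate consequence of Lemmas~\ref{lem:map-wrho} and~\ref{lem:char-wrho} and the duality theorem,'' which is exactly the one-line argument you arrive at. Your expanded version spells out the dimension bound and the role of each lemma accurately; the only slip is in the displayed Hom-space parameters (it should be $\Hom_{\fg,\wB}(M(\mm^{-1},-\wrho),M(\pp,\wrho))$, matching Lemma~\ref{lem:char-wrho}), but you already flag this bookkeeping as the one point needing care, and it does not affect the argument.
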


\begin{proof}
This is an immediate consequence of 
Lemmas \ref{lem:map-wrho} and \ref{lem:char-wrho} and the 
duality theorem.
\end{proof}

\subsection{$\wK$-type formulas of the solution spaces
$\Cal{S}ol_{(\XcY;-\wrho)}(\sigma)_{\wK}$} 
\label{subsec:IDO-wrho2}

We now aim to obtain the $\wK$-type formula of 
$\Cal{S}ol_{(\XcY;-\wrho)}(\sigma)_{\wK}$.
As for the $\rho$ case, we start by finding $(\XcY)^\flat$.

\begin{lem}\label{lem:Kpic-wrho}
We have $(\XcY)^\flat =X^\flat Y^\flat + Y^\flat X^\flat
 = \frac{\sqrt{-1}}{2}(Z_+^2 - Z_-^2)$.
\end{lem}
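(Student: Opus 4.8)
The plan is to establish the identity in two stages: first the algebraic equality $(\XcY)^\flat = X^\flat Y^\flat + Y^\flat X^\flat$ in $\Cal{U}(\fk)$, and then the evaluation of the right-hand side using Lemma \ref{lem:Kpic-rho}.

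For the first stage I would use that, since $\wB = \wM AN$ is minimal parabolic and $\fk\cap\fm = \{0\}$, the isomorphism $\iota$ of Lemma \ref{lem:GK} identifies $\Cal{U}(\fg)\otimes_{\Cal{U}(\fb)}\C_{-\lambda-\rho}$ with $\Cal{U}(\fk)$ as a left $\Cal{U}(\fk)$-module, with generator $v := 1\otimes \mathbb{1}_{-\lambda-\rho}$ annihilated by $\fn$; thus, for $w\in\Cal{U}(\bar{\fn})$, the element $w^\flat$ is characterized by $w\cdot v = w^\flat\cdot v$, and in particular it is insensitive to the $\fa$-weight of $v$, so the formulas $X^\flat = \frac{\sqrt{-1}}{2}(Z_+ + Z_-)$ and $Y^\flat = \frac{1}{2}(Z_+ - Z_-)$ of Lemma \ref{lem:Kpic-rho} apply verbatim here. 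Writing $X = X^\flat - \gt(X)$ and $Y = Y^\flat - \gt(Y)$ with $\gt(X) = -E_{12}\in\fn$ and $\gt(Y) = -E_{23}\in\fn$ (as in the proof of Lemma \ref{lem:Kpic-rho}), I would compute $XY\cdot v = X^\flat Y\cdot v - \gt(X)Y\cdot v$. Here $X^\flat Y\cdot v = X^\flat\cdot(Y^\flat v) = X^\flat Y^\flat v$, while $\gt(X)Y\cdot v = Y\gt(X)\cdot v + [\gt(X),Y]\cdot v = 0$ because $\gt(X)\cdot v = 0$ and a direct matrix computation gives $[\gt(X),Y] = [-E_{12},E_{32}] = 0$. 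Hence $(XY)^\flat = X^\flat Y^\flat$, and symmetrically, using $[\gt(Y),X] = [-E_{23},E_{21}] = 0$, $(YX)^\flat = Y^\flat X^\flat$; adding the two gives $(\XcY)^\flat = (XY+YX)^\flat = X^\flat Y^\flat + Y^\flat X^\flat$.

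For the second stage I would substitute the formulas for $X^\flat$ and $Y^\flat$: the mixed products $Z_+Z_-$ and $Z_-Z_+$ cancel in $X^\flat Y^\flat + Y^\flat X^\flat$, leaving $\frac{\sqrt{-1}}{4}\cdot 2(Z_+^2 - Z_-^2) = \frac{\sqrt{-1}}{2}(Z_+^2 - Z_-^2)$, as claimed.

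The one subtlety — and therefore the main, if light, obstacle — is that $w\mapsto w^\flat$ is not multiplicative in general, so one must check that commuting $X$ past $Y^\flat$ (and $Y$ past $X^\flat$) introduces no extra $\Cal{U}(\fk)$-term; this is precisely what the vanishing of the commutators $[\gt(X),Y]$ and $[\gt(Y),X]$, together with $\fn\cdot v = 0$, secures. The remaining steps are routine.
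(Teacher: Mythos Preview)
Your proof is correct and follows essentially the same approach as the paper: both verify the first identity by working in the module $\Cal{U}(\fg)\otimes_{\Cal{U}(\fb)}\C_{-\wrho}$ via the isomorphism $\iota$ of Lemma \ref{lem:GK}, and both derive the second identity directly from Lemma \ref{lem:Kpic-rho}. The paper simply asserts ``a direct computation shows that $\iota((X^\flat Y^\flat + Y^\flat X^\flat)\otimes\mathbb{1}) = (\XcY)\otimes\mathbb{1}_{-\wrho}$,'' whereas you carry out that computation explicitly and isolate the mechanism, namely the vanishing of $[\gt(X),Y]$ and $[\gt(Y),X]$, that makes $(\cdot)^\flat$ multiplicative on the pair $X,Y$ and hence $\lambda$-independent; this is a nice clarification, but not a different argument.
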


\begin{proof}
A direct computation shows that 
$\iota((X^\flat Y^\flat + Y^\flat X^\flat)\otimes \mathbb{1}) 
= (\XcY)\otimes \mathbb{1}_{-\wrho}$,
where $\iota$ is the isomorphism in \eqref{eqn:GK}.
This concludes the first identity.
The second identity easily follows from Lemma \ref{lem:Kpic-rho}.
\end{proof}

For the next we find the explicit formula of $\dpin((\XcY)^\flat)$.

\begin{lem}\label{lem:dpin-wrho}
We have
\begin{equation}\label{eqn:dpi7}
\dpin((\XcY)^\flat)
= \frac{\sqrt{-1}}{2} ((1-t^4)\frac{d^2}{dt^2}+2(n-1)t^3\frac{d}{dt} - n(n-1)t^2).
\end{equation}
\end{lem}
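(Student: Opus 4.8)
The plan is to reduce the statement to the explicit realization of the $\f{sl}(2,\C)$-action on $\Pol_n[t]$ recorded in \eqref{eqn:Zpm}, together with Lemma \ref{lem:Kpic-wrho}, which gives $(\XcY)^\flat = \tfrac{\sqrt{-1}}{2}(Z_+^2 - Z_-^2)$. Thus it suffices to compute the two second-order differential operators $\dpin(Z_+^2)$ and $\dpin(Z_-^2)$ on $\Pol_n[t]$, take their difference, and multiply by $\tfrac{\sqrt{-1}}{2}$.

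First I would recall from \eqref{eqn:Zpm} that $\dpin(Z_+) = -\tfrac{d}{dt}$ and $\dpin(Z_-) = -nt + t^2\tfrac{d}{dt}$. Since $\dpin$ is an algebra homomorphism on $\Cal{U}(\f{sl}(2,\C))$, the operators $\dpin(Z_\pm^2)$ are obtained by composing these first-order operators with themselves \emph{as differential operators}. Squaring $-\tfrac{d}{dt}$ immediately gives $\dpin(Z_+^2) = \tfrac{d^2}{dt^2}$. For $Z_-$, writing $L := -nt + t^2\tfrac{d}{dt}$ and applying $L$ twice to a test function $f(t)$, one finds $Lf = -ntf + t^2 f'$ and then, applying the product rule to the cross term $t^2\tfrac{d}{dt}(-ntf)$, that $L(Lf) = t^4 f'' + 2(1-n)t^3 f' + n(n-1)t^2 f$. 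Hence $\dpin(Z_-^2) = t^4\tfrac{d^2}{dt^2} + 2(1-n)t^3\tfrac{d}{dt} + n(n-1)t^2$.

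Subtracting, I would obtain $\dpin(Z_+^2 - Z_-^2) = (1-t^4)\tfrac{d^2}{dt^2} + 2(n-1)t^3\tfrac{d}{dt} - n(n-1)t^2$, and multiplying through by $\tfrac{\sqrt{-1}}{2}$ yields the claimed formula \eqref{eqn:dpi7}. There is no genuine obstacle in this argument; the only point requiring attention is that the product $Z_-^2$ in $\Cal{U}(\f{sl}(2,\C))$ must be evaluated as a composition of operators --- so that $t^2\tfrac{d}{dt}$ acts by the Leibniz rule on the coefficient $-nt$, contributing the term $-nt^2 f$ --- rather than as a pointwise product of symbols. As a sanity check one may also verify the resulting operator on the monomials $1, t, t^2$, but the direct computation already pins down every coefficient.
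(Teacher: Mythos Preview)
Your proof is correct and follows exactly the same approach as the paper: invoke Lemma \ref{lem:Kpic-wrho} to write $(\XcY)^\flat = \tfrac{\sqrt{-1}}{2}(Z_+^2 - Z_-^2)$, then use the formulas \eqref{eqn:Zpm} for $\dpin(Z_\pm)$ and carry out the composition. The paper's proof simply says ``the desired identity then follows from \eqref{eqn:Zpm} with a direct computation,'' and you have supplied that direct computation in full.
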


\begin{proof}
By Lemma \ref{lem:Kpic-wrho}, we have
\begin{equation*}
\dpin((\XcY)^\flat) = \frac{\sqrt{-1}}{2}(\dpin(Z_+)^2 -\dpin(Z_-)^2).
\end{equation*}
The desired identity then follows from 
\eqref{eqn:Zpm} with a direct computation.
\end{proof}

We next want to determine for what $n \in \Z_{\geq 0}$
the solution space $\Sol_{(\XcY)}(n)$ is non-zero.
Let ${}_2F_1[a,b,c;x]$ denote the Gauss hypergeometric series.
We put
\begin{equation}\label{eqn:uv}
u_n(t):= {}_2F_1[-\frac{n}{4}, -\frac{n-1}{4}, \frac{3}{4};t^4]
\;\; \text{and} \;\;
v_n(t):= t\, {}_2F_1[-\frac{n-1}{4}, -\frac{n-2}{4}, \frac{5}{4};t^4].
\end{equation}
It is remarked that $u_n(t)$ and $v_n(t)$ form a fundamental set of solutions
to Euler's hypergeometric equation $D[a, b,c;x]f(x)=0$ with 
$a=-\frac{n}{4}$, $b=-\frac{n-1}{4}$, and $c=\frac{3}{4}$.
(For some details see Appendix.)
In particular $u_n(t)$ and $v_n(t)$ are linearly independent.

\begin{prop}\label{prop:SolXcY}
We have
$\Sol_{(\XcY)}(n) \neq \{0\}$
if and only if  $n \equiv 0, 1, 2 \pmod 4$.
Moreover the solution space 
$\Sol_{(\XcY)}(n)$ is given as follows.
\begin{enumerate}
\item $n \equiv 0 \pmod 4:$ $\Sol_{(\XcY)}(n) = \C u_n(t)$.
\item $n \equiv 1 \pmod 4:$ $\Sol_{(\XcY)}(n) = \C u_n(t) \oplus \C v_n(t)$.
\item $n \equiv 2 \pmod 4:$ $\Sol_{(\XcY)}(n) = \C v_n(t)$.
\end{enumerate}
\end{prop}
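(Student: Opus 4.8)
The plan is to reduce the problem to a careful analysis of polynomial solutions of the hypergeometric-type operator in \eqref{eqn:dpi7}. By \eqref{eqn:Soln2} we must determine, for each $n \in \Z_{\geq 0}$, the kernel of $d\pi_n((\XcY)^\flat)$ acting on $\Pol_n[t]$. By Lemma \ref{lem:dpin-wrho} this amounts to solving the ordinary differential equation
\begin{equation*}
(1-t^4)p''(t) + 2(n-1)t^3 p'(t) - n(n-1)t^2 p(t) = 0
\end{equation*}
within $\Pol_n[t]$. First I would substitute $x = t^4$ to recognize that the even-part and odd-part components of a solution are, respectively, governed by Euler's hypergeometric operator $D[-\tfrac{n}{4}, -\tfrac{n-1}{4}, \tfrac{3}{4}; x]$ after suitable bookkeeping with the chain rule; this is precisely the content of the remark following \eqref{eqn:uv}, so $u_n(t)$ and $v_n(t)$ form a fundamental system of solutions to the full second-order ODE on a neighbourhood of $t=0$. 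Since the ODE has order two, $\dim_\C \Sol_{(\XcY)}(n) \leq 2$, and the task is to decide which of the two model solutions $u_n(t), v_n(t)$ happen to be polynomials of degree $\leq n$ for a given $n$.

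Next I would analyze when $u_n(t)$ and $v_n(t)$ terminate as polynomials. The series ${}_2F_1[-\tfrac{n}{4}, -\tfrac{n-1}{4}, \tfrac{3}{4}; t^4]$ terminates precisely when one of $-\tfrac{n}{4}$ or $-\tfrac{n-1}{4}$ is a non-positive integer, i.e.\ when $n \equiv 0$ or $n \equiv 1 \pmod 4$; in those cases $u_n(t)$ is a polynomial in $t^4$ of $t$-degree $n$ or $n-1$ respectively, hence lies in $\Pol_n[t]$. Similarly $v_n(t) = t\,{}_2F_1[-\tfrac{n-1}{4}, -\tfrac{n-2}{4}, \tfrac{5}{4}; t^4]$ terminates exactly when $n \equiv 1$ or $n \equiv 2 \pmod 4$, giving a polynomial of $t$-degree $n$ or $n-1$ in $\Pol_n[t]$. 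I would also check that when the relevant parameter is a non-negative integer (rather than non-positive) the series does not terminate — so in the cases $n \equiv 3 \pmod 4$ neither $u_n$ nor $v_n$ is polynomial, and one must argue that no polynomial solution exists at all: since $u_n, v_n$ span the full solution space and any polynomial solution would be a linear combination of them, a non-polynomial combination of two non-polynomial independent solutions cannot be polynomial unless one checks no cancellation occurs, which follows from the parity separation ($u_n$ is even in $t$, $v_n$ is odd). Assembling these observations yields the three listed cases together with $\Sol_{(\XcY)}(n) = \{0\}$ for $n \equiv 3 \pmod 4$.

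The main obstacle I anticipate is the bookkeeping in the substitution $x = t^4$: one must verify cleanly that the operator in \eqref{eqn:dpi7}, restricted to even polynomials, pulls back to exactly $D[-\tfrac{n}{4}, -\tfrac{n-1}{4}, \tfrac{3}{4}; x]$, and restricted to odd polynomials $t \cdot (\text{even})$, pulls back to $D[-\tfrac{n-1}{4}, -\tfrac{n-2}{4}, \tfrac{5}{4}; x]$ — the shift in the third parameter from $\tfrac34$ to $\tfrac54$ is the delicate point and comes from the extra factor of $t$. A secondary subtlety is the degree count: when a ${}_2F_1$ with top parameter $-\tfrac{n}{4}$ terminates, its polynomial degree in $x$ is $\tfrac{n}{4}$, hence $t$-degree exactly $n$, so it sits at the top of $\Pol_n[t]$; one should confirm it does not overshoot. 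These are routine but must be done carefully; I would defer the classical hypergeometric facts (termination criteria, linear independence via the exponents $0$ and $1-c = \tfrac14$ at the origin) to the Appendix as the paper indicates, and cite them here.
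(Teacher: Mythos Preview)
Your proposal is correct and follows essentially the same route as the paper's proof in the Appendix. The only difference is packaging: the paper establishes a single operator identity $T[n;t] = 16t^2\, D[-\tfrac{n}{4}, -\tfrac{n-1}{4}, \tfrac{3}{4}; t^4]$ (Lemma~\ref{lem:HGE}) and then reads off both fundamental solutions $u_n, v_n$ from the standard pair $\{f,\; x^{1-c}g\}$ for the hypergeometric equation, whereas you split into even and odd parts and identify each with its own hypergeometric operator --- these are equivalent, since the second standard solution $x^{1-c}{}_2F_1[a{-}c{+}1,b{-}c{+}1,2{-}c;x]$ is precisely what produces your shifted parameters $(-\tfrac{n-1}{4}, -\tfrac{n-2}{4}, \tfrac{5}{4})$ and the prefactor $t$. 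Your parity argument to exclude polynomial linear combinations when $n\equiv 3 \pmod 4$ is a welcome bit of care that the paper leaves implicit.
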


Since the proof involves some classical facts on the Gauss hypergeometric 
series ${}_2F_1[a,b,c;x]$, we give the proof in 
Section \ref{subsec:appendix2} in Appendix.
Then, by taking the assertions of Proposition \ref{prop:SolXcY} as granted,
we next show the $\wM$-representations
on $\Sol_{(\XcY)}(n)$ for $n \equiv 0, 1, 2 \pmod 4$.

\begin{prop}\label{prop:Mrep-wrho}
As an $\wM$-representation, we have the following.
\begin{enumerate}
\item[(1)] $n\equiv 0 \pmod 4:$ 
$\Sol_{(\XcY)}(n) \simeq \pp$. 
\item[(2)] $n\equiv 1 \pmod 4:$
$\Sol_{(\XcY)}(n)  \simeq \mathbb{H}$.
\item[(3)] $n\equiv 2 \pmod 4:$
$\Sol_{(\XcY)}(n)  \simeq \mm$.
\end{enumerate}
\end{prop}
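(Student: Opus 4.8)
The plan is to read off the $\wM$-action on the explicit generators $u_n(t)$ and $v_n(t)$ of $\Sol_{(\XcY)}(n)$ using the transformation laws \eqref{eqn:pi-wM} together with the character table (Table \ref{table:char}), exactly as in the proof of Proposition \ref{prop:Mrep-rho}. First I would fix $n$ in one of the three congruence classes mod $4$ and recall from Proposition \ref{prop:SolXcY} the precise description of $\Sol_{(\XcY)}(n)$, so that the task becomes computing how $\wm_1, \wm_2, \wm_3$ act on the relevant span. Since $\Sol_{(\XcY)}(n)$ is an $\wM$-subrepresentation of $\Pol_n[t]$ (this follows from Lemma \ref{lem:SolM}, as $\Sol_{(\XcY)}(n) = \Sol_{(\XcY^\flat)}(\pi_n)$ in the notation there), it suffices to verify invariance on generators and then match characters.

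For case (1), $n \equiv 0 \pmod 4$, we have $\Sol_{(\XcY)}(n) = \C u_n(t)$ with $u_n(t) = {}_2F_1[-\tfrac n4, -\tfrac{n-1}{4}, \tfrac34; t^4]$, which is an even polynomial in $t$ of degree $n$ and a power series in $t^4$; from \eqref{eqn:pi-wM}, $\wm_1$ sends $p(t) \mapsto (\sqrt{-1})^n p(-t) = p(t)$ since $4 \mid n$ and $u_n$ is even, while $\wm_2$ sends $p(t) \mapsto t^n p(-1/t)$; one checks that $t^n u_n(-1/t) = u_n(t)$ using the standard Euler/Kummer relation ${}_2F_1[a,b,c;z] = (1-z)^{-a}{}_2F_1[a,c-b,c;z/(z-1)]$ or, more directly, the fact that $t^n u_n(-1/t)$ is again a polynomial solution of the same hypergeometric ODE of the same degree, hence proportional to $u_n$, with constant term matching. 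So $u_n$ is fixed by all of $\wM$ and $\Sol_{(\XcY)}(n) \simeq \pp$. For case (3), $n \equiv 2 \pmod 4$, the generator is $v_n(t) = t\,{}_2F_1[\cdots; t^4]$, which is odd and of degree $n$; here $\wm_1: v_n \mapsto (\sqrt{-1})^n v_n(-t) = (\sqrt{-1})^2(-1)v_n(t) = v_n(t)$, whereas $\wm_2: v_n \mapsto t^n v_n(-1/t)$, and evaluating the sign (again by the proportionality-of-solutions argument, tracking the leading or lowest-order coefficient) gives $-v_n(t)$; comparing with Table \ref{table:char} identifies $\Sol_{(\XcY)}(n) \simeq \mm$. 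For case (2), $n \equiv 1 \pmod 4$, the space is $\C u_n(t) \oplus \C v_n(t)$, two-dimensional, hence by \eqref{eqn:char} it must be $\simeq \mathbb{H}$ provided it is irreducible; to nail this down I would compute the matrices of $\wm_1$ and $\wm_2$ in the ordered basis $(u_n, v_n)$: $\wm_1$ acts by $(\sqrt{-1})^n$ on the even part $u_n$ and by $(\sqrt{-1})^n$ times $(-1)$ on the odd part $v_n$, i.e.\ as $\mathrm{diag}((\sqrt{-1})^n, -(\sqrt{-1})^n)$, which since $n$ is odd is $\pm\sqrt{-1}\,\mathrm{diag}(1,-1)$ — nonscalar — while $\wm_2$ swaps the even and odd solution spaces (up to scalars), giving an antidiagonal matrix; since these two operators do not commute and generate a group acting irreducibly on $\C^2$, the representation is the unique $2$-dimensional irreducible $\mathbb{H}$ of $Q_8 \simeq \wM$.

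The main obstacle I anticipate is pinning down the exact scalars (signs and powers of $\sqrt{-1}$) in the action of $\wm_2$ and $\wm_3$ on $u_n$ and $v_n$ — the transformation $p(t) \mapsto t^n p(-1/t)$ composed with a hypergeometric series in $t^4$ does not simplify termwise, so one must argue that $t^n u_n(-1/t)$ (resp.\ $t^n v_n(-1/t)$) is again a polynomial solution of Euler's equation $D[-\tfrac n4, -\tfrac{n-1}{4}, \tfrac34; t^4]f = 0$ of degree $\le n$, invoke the one-dimensionality of the relevant solution spaces established in Proposition \ref{prop:SolXcY} to conclude proportionality, and then compute the single proportionality constant by matching one coefficient (the constant term for $u_n$, the linear coefficient for $v_n$). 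Once this bookkeeping is done, the identification with the entries of Table \ref{table:char} is immediate and the $\wM$-representations in (1)–(3) follow. It may be cleanest to defer the detailed coefficient-chasing to the appendix alongside the proof of Proposition \ref{prop:SolXcY}, since both rely on the same classical facts about ${}_2F_1$.
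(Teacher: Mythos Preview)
Your proposal is correct and follows essentially the same strategy as the paper: compute the action of $\wm_1$ and $\wm_2$ on the explicit generators $u_n,v_n$ via \eqref{eqn:pi-wM} and then identify the character (or, for $n\equiv 1$, observe that $\wm_1$ already acts non-scalarly as $\mathrm{diag}(\sqrt{-1},-\sqrt{-1})$, forcing irreducibility and hence $\mathbb{H}$). The one place the paper is slicker than your coefficient-matching suggestion is in pinning down the $\wm_2$-scalar in cases (1) and (3): instead of chasing leading or constant coefficients of the hypergeometric polynomial, the paper evaluates the relation $t^{n}u_n(1/t)=c\,u_n(t)$ at $t=1$ and uses Gauss's summation formula (Fact~\ref{fact:Gauss}) to see that $u_n(1)\neq 0$, whence $c=1$ immediately (and analogously $v_n(1)\neq 0$ together with $v_n(-1)=-v_n(1)$ gives $c=-1$ in case (3)).
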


\begin{proof}
We discuss (1) and (2) only; 
the assertion (3) can be shown similarly to (1).
We start with the assertion (1).
Suppose that $n\equiv 0 \pmod 4$. Write $n=4k$ for some $k \in \Z_{\geq 0}$ so that
\begin{equation}\label{eqn:uk}
u_{4k}(t)= {}_2F_1[-k, -k+\frac{1}{4}, \frac{3}{4};t^4].
\end{equation}
By Proposition \ref{prop:SolXcY}, we have
$\Sol_{(\XcY)}(4k) = \C u_{4k}(t)$; in particular, $\wM$ acts on $\Sol_{(\XcY)}(4k)$ 
as a character $(\eps,\eps')$.
As $\wm_3 = \wm_1\wm_2$, to determine the character $(\eps,\eps')$,
it suffices to consider the actions of $\wm_1$ and $\wm_2$ on $\C u_{4k}(t)$.  
We claim that both $\wm_1$ and $\wm_2$ act on $\C u_{4k}(t)$ trivially.
First it is easy to see that the action of $\wm_1$ on $\C u_{4k}(t)$ is trivial.
Indeed, it follows from \eqref{eqn:pi-wM} and \eqref{eqn:uk} that we have
\begin{equation*}
\wm_1\colon u_{4k}(t)\longmapsto (\sqrt{-1})^{4k}u_{4k}(-t)=u_{4k}(t).
\end{equation*}
In order to show that the action of $\wm_2$ is also trivial, observe that
$\wm_2$ transforms $u_{4k}(t)$ as
\begin{equation*}
\wm_2\colon u_{4k}(t)\longmapsto t^{4k}\, u_{4k}(-\frac{1}{t})
=t^{4k}u_{4k}(\frac{1}{t}).
\end{equation*}
Since $\Sol_{(\XcY)}(4k)=\C u_{4k}(t)$ is an $\wM$-representation,
there exists a constant $c \in \C$ such that 
$t^{4k}u_{4k}(\frac{1}{t})=cu_{4k}(t)$.
In particular we have $u_{4k}(1) = c u_{4k}(1)$. 
It follows from a general fact (Fact \ref{fact:Gauss} in Section \ref{subsec:appendix1})
on the Gauss hypergeometric series ${}_2F_1[a,b,c;x]$ that 
\begin{equation*}
u_{4k}(1) = {}_2F_1[-k, -k+\frac{1}{4}, \frac{3}{4};1] = 
\frac{\Gamma(\frac{3}{4})\Gamma(2k+\frac{1}{2})}
{\Gamma(k+\frac{3}{4})\Gamma(k+\frac{1}{2})}\neq 0.
\end{equation*}
Thus we have $c=1$. Therefore $\wm_2$ also acts on $\C u_{4k}(t)$ trivially.

In order to show the assertion (2), suppose that $n\equiv 1 \pmod 4$.
Since there is only a unique irreducible $2$-dimensional $\wM$-representation 
(see \eqref{eqn:char}), it suffices to show that the $\wM$-representation 
$\C u_n(t) \oplus \C v_n(t)$ is irreducible.
We show by contradiction.
Assume the contrary, that is, there exists a non-trivial proper $\wM$-invariant subspace 
$\{0\}\neq V \varsubsetneq \C u_n(t) \oplus \C v_n(t)$. 
Then $V$ is of the form
$V=\C(ru_n(t) + sv_n(t))$ for some $(r,s) \in \C^2$ with $(r,s) \neq (0,0)$.
Observe that, as $n\equiv 1 \pmod 4$, we have $(\sqrt{-1})^n = \sqrt{-1}$.
Thus $\wm_1$ transforms $u_n(t)$ and $v_n(t)$ as
\begin{equation*}
u_n(t) \longmapsto (\sqrt{-1})^n u_n(-t) = \sqrt{-1}u_n(t), \quad \text{and} \quad
v_n(t)  \longmapsto (\sqrt{-1})^n v_n(-t) = -\sqrt{-1}v_n(t).
\end{equation*}
Therefore $ru_n(t) + sv_n(t)$ is transformed by $\wm_1$ as
\begin{equation}\label{eqn:rusv1}
ru_n(t) + sv_n(t) \longmapsto \sqrt{-1}(ru_n(t) - sv_n(t)).
\end{equation}
Observe that, as $V$ is a 1-dimensional $\wM$-representation,
$\wM$ acts on $V$ as a character $(\eps,\eps')$.
In particular $\wm_1$ acts on $V$ by $\pm 1$, that is,
\begin{equation}\label{eqn:rusv2}
ru_n(t) + sv_n(t) \longmapsto \pm(ru_n(t) + sv_n(t)).
\end{equation}
Equations \eqref{eqn:rusv1} and \eqref{eqn:rusv2} 
imply that 
$(\sqrt{-1}-1)ru_n(t)-(\sqrt{-1}+ 1)sv_n(t) =0$ or
$(\sqrt{-1}+1)ru_n(t)-(\sqrt{-1}-1)sv_n(t) =0$.
Since $u_n(t)$ and $v_n(t)$ are linearly independent, this concludes
that $(r,s) =0$, which contradicts the choice of $(r,s)\neq (0,0)$.
Hence $\C u_n(t) \oplus \C v_n(t)$ is irreducible.
\end{proof}

\begin{prop}\label{prop:Krep-wrho}
The following are equivalent on $\sigma \in \Irr(\wM)$:
\begin{enumerate}
\item[(i)] $\Hom_{\wM}(\Sol_{(\XcY)}(n), \sigma)\neq \{0\};$
\item[(ii)] $\dim_\C \Hom_{\wM}(\Sol_{(\XcY)}(n), \sigma)=1;$
\item[(iii)] $\sigma = \pp, \mathbb{H}$, \mm.
\end{enumerate}
Moreover we have the following.
\begin{enumerate}
\item $\Hom_{\wM}(\Sol_{(\XcY)}(n), \pp)\neq \{0\}$ 
$\iff$ $n \equiv 0 \pmod 4$.
\item $\Hom_{\wM}(\Sol_{(\XcY)}(n), \mathbb{H})
\hspace{0.63cm} 
\neq \{0\}$ 
$\iff$ $n \equiv 1 \pmod 4$.
\item $\Hom_{\wM}(\Sol_{(\XcY)}(n), \mm)
\neq \{0\}$ 
$\iff$ $n \equiv 2 \pmod 4$.
\end{enumerate}
\end{prop}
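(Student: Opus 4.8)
The plan is to deduce everything directly from Propositions \ref{prop:SolXcY} and \ref{prop:Mrep-wrho} together with Schur's lemma, since those two results already identify $\Sol_{(\XcY)}(n)$ as an explicit irreducible $\wM$-representation in each residue class of $n$ modulo $4$.

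First I would record the trichotomy coming from Proposition \ref{prop:SolXcY}. For $n \equiv 3 \pmod 4$ the space $\Sol_{(\XcY)}(n)$ is zero, so $\Hom_{\wM}(\Sol_{(\XcY)}(n), \sigma) = \{0\}$ for every $\sigma \in \Irr(\wM)$, and there all the conditions (i)--(iii) as well as each of (1)--(3) fail simultaneously. For $n \equiv 0, 1, 2 \pmod 4$, Proposition \ref{prop:Mrep-wrho} tells us that $\Sol_{(\XcY)}(n)$ is, respectively, the irreducible $\wM$-module $\pp$, $\mathbb{H}$, or $\mm$. Since $\wM \simeq Q_8$ is a finite group and we work over $\C$, Schur's lemma applies: for irreducible $\wM$-modules $U$ and $\sigma$ one has $\dim_\C \Hom_{\wM}(U, \sigma) = 1$ if $U \simeq \sigma$ and $\dim_\C \Hom_{\wM}(U, \sigma) = 0$ otherwise. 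Applying this with $U = \Sol_{(\XcY)}(n)$ immediately yields the equivalence of (i) and (ii) and shows that the common value of $\dim_\C \Hom_{\wM}(\Sol_{(\XcY)}(n), \sigma)$, when nonzero, is exactly $1$.

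Next I would match up the residue classes with the representations. By the previous step, $\Hom_{\wM}(\Sol_{(\XcY)}(n), \sigma) \neq \{0\}$ if and only if $\sigma \simeq \Sol_{(\XcY)}(n)$, i.e. if and only if $\sigma = \pp$ and $n \equiv 0 \pmod 4$, or $\sigma = \mathbb{H}$ and $n \equiv 1 \pmod 4$, or $\sigma = \mm$ and $n \equiv 2 \pmod 4$; this is precisely the content of assertions (1), (2), (3). Letting $n$ range over $\Z_{\geq 0}$, the representations that occur as $\Sol_{(\XcY)}(n)$ are exactly $\pp$, $\mathbb{H}$, $\mm$, which gives the equivalence of (i) and (iii) and finishes the proof.

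As for difficulty, there is essentially no obstacle here once Propositions \ref{prop:SolXcY} and \ref{prop:Mrep-wrho} are in hand: the only point requiring a little care is to use the full strength of Proposition \ref{prop:Mrep-wrho} in the case $n \equiv 1 \pmod 4$, namely that the two-dimensional space $\C u_n(t)\oplus \C v_n(t)$ is \emph{irreducible} as an $\wM$-module (not merely two-dimensional), so that Schur's lemma and the bound $\dim_\C \Hom_{\wM}(\Sol_{(\XcY)}(n),\sigma)\le 1$ are legitimate. The genuinely substantial work is hidden in Proposition \ref{prop:SolXcY}, whose proof is deferred to the appendix.
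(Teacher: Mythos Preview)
Your proof is correct and follows the same approach as the paper, which simply cites Propositions \ref{prop:SolXcY} and \ref{prop:Mrep-wrho} (together with \eqref{eqn:Soln}) and leaves the Schur-lemma verification implicit. You have spelled out exactly the details the paper omits, including the case $n\equiv 3 \pmod 4$ and the reason irreducibility in the $n\equiv 1$ case matters.
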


\begin{proof}
The proposition easily follows from 
Propositions \ref{prop:SolXcY} and \ref{prop:Mrep-wrho} with
\eqref{eqn:Soln}.
\end{proof}

As a summary of the results in this section, we obtain the following.

\begin{thm}\label{thm:XcY1}
For $\sigma \in \Irr(\wM)$, we have
\begin{equation*}
\Cal{S}ol_{(\XcY;-\wrho)}(\sigma)_{\wK} \neq \{0\} 
\iff \sigma = \pp,\, \mathbb{H},\, \mm.
\end{equation*}
Moreover, for $\sigma = \pp,\, \mathbb{H},\, \mm$, the $\wK$-type formula 
of $\Cal{S}ol _{(\XcY;-\wrho)}(\sigma)_{\wK}$ is obtained as follows.
\begin{enumerate}
\item[(a)] $\sigma = \pp:$ 
$\displaystyle{\Cal{S}ol _{(\XcY;-\wrho)}(\pp)_{\wK} 
\simeq \bigoplus_{\ell=0}^\infty \Pol_{4\ell}[t]}$.

\item[(b)] $\sigma= \mathbb{H}:$\hspace{0.6cm}
$\displaystyle{\Cal{S}ol _{(\XcY;-\wrho)}(\mathbb{H})_{\wK} \hspace{0.7cm}
\simeq \bigoplus_{\ell=0}^\infty \Pol_{4\ell+1}[t]}$.
\item[(c)] $\sigma= \mm:$ 
$\displaystyle{\Cal{S}ol _{(\XcY;-\wrho)}(\mm)_{\wK} 
\simeq \bigoplus_{\ell=0}^\infty \Pol_{4\ell+2}[t]}$.
\end{enumerate}
\end{thm}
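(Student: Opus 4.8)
The plan is to assemble Theorem \ref{thm:XcY1} directly from the machinery already in place, treating it as the final bookkeeping step after all the substantive work (Propositions \ref{prop:SolXcY}, \ref{prop:Mrep-wrho}, \ref{prop:Krep-wrho}) has been done. First I would invoke the master formula \eqref{eqn:Soln}, specialized to $u = \XcY$ and $\lambda = -\wrho$, namely
\begin{equation*}
\Cal{S}ol_{(\XcY; -\wrho)}(\sigma)_{\wK}
\simeq
\bigoplus_{n=0}^\infty \Pol_n[t] \otimes \Hom_{\wM}\!\left(\Sol_{(\XcY)}(n), \sigma\right),
\end{equation*}
which reduces everything to understanding the Hom-spaces on the right. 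By Proposition \ref{prop:Krep-wrho}, these Hom-spaces are nonzero only for $\sigma \in \{\pp, \mathbb{H}, \mm\}$, and in that case each nonzero one is one-dimensional; this immediately yields the ``$\iff$'' for non-vanishing of $\Cal{S}ol_{(\XcY;-\wrho)}(\sigma)_{\wK}$.

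Next I would read off the $\wK$-type formulas case by case. For $\sigma = \pp$, Proposition \ref{prop:Krep-wrho}(1) says $\Hom_{\wM}(\Sol_{(\XcY)}(n), \pp) \neq \{0\}$ exactly when $n \equiv 0 \pmod 4$, and then it is one-dimensional, so only the summands with $n = 4\ell$ survive, each contributing a single copy of $\Pol_{4\ell}[t]$; this gives (a). The same argument with Proposition \ref{prop:Krep-wrho}(2) (the congruence $n \equiv 1 \pmod 4$) gives (b), and with Proposition \ref{prop:Krep-wrho}(3) (the congruence $n \equiv 2 \pmod 4$) gives (c). The only slightly delicate point is that for $\sigma = \mathbb{H}$ one is taking $\Hom_{\wM}$ out of the two-dimensional irreducible $\Sol_{(\XcY)}(n)$, and Schur's lemma still forces this to be one-dimensional since $\Sol_{(\XcY)}(n) \simeq \mathbb{H}$ as an $\wM$-module by Proposition \ref{prop:Mrep-wrho}(2); I would note this explicitly so the multiplicity-one claim is transparent.

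The main obstacle is not in this theorem at all but upstream: the genuinely hard input is Proposition \ref{prop:SolXcY}, whose proof is deferred to the appendix and rests on classical identities for ${}_2F_1$ (in particular determining when the hypergeometric solutions $u_n(t)$, $v_n(t)$ are genuine polynomials of degree $\le n$, which controls the congruence classes mod $4$). Given that proposition and the representation-theoretic identifications in Propositions \ref{prop:Mrep-wrho} and \ref{prop:Krep-wrho}, the proof of Theorem \ref{thm:XcY1} is essentially a one-line application of \eqref{eqn:Soln} followed by extracting the relevant index set; accordingly I would keep the written proof short, citing \eqref{eqn:Soln} and Proposition \ref{prop:Krep-wrho}, and adding only the remark that each surviving summand appears with multiplicity one. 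As a sanity check I would also confirm this is consistent with the translation $(\pi_n, \Pol_n[t]) \simeq V_{(n/2)}$ used to restate the result as Theorem \ref{thm:XcY} in the introduction, so that the mysteriously absent $\wK$-types $\bigoplus_a V_{(2a + 3/2)}$ correspond precisely to the missing congruence class $n \equiv 3 \pmod 4$.
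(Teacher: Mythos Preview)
Your proposal is correct and matches the paper's approach exactly: the paper simply says the proof is similar to that of Theorem \ref{thm:XandY} and omits it, and that earlier proof consists precisely of invoking \eqref{eqn:Soln} and then reading off the answer from the relevant Hom-space proposition (here Proposition \ref{prop:Krep-wrho}). Your additional remarks about Schur's lemma for the $\mathbb{H}$ case and the consistency check with the missing congruence class $n\equiv 3 \pmod 4$ are helpful elaborations but not required.
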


\begin{proof}
Since the proof is similar to the one for Theorem \ref{thm:XandY},
we omit the proof.
\end{proof}

Now we give a proof of Theorem \ref{thm:XcY}.

\begin{proof}[Proof of Theorem \ref{thm:XcY}]
As for Theorem \ref{thm:Ktype-rho},
the assertions follow from Theorem \ref{thm:XcY1}.
\end{proof}


\section{Appendix}\label{sec:appendix}

The purpose of this short appendix is to give a proof of Proposition \ref{prop:SolXcY}, 
in which we determine the space $\Sol_{(\XcY)}(n)$ 
of polynomial solutions to the differential equation $\dpin((\XcY)^\flat)p(t)=0$. 
To do so we observe Euler's hypergeometric differential equation.

\subsection{Gauss hypergeometric series ${}_2F_1[a,b,c;x]$}
\label{subsec:appendix1}

We start by recalling some well-known facts on the Gauss hypergeometric series
\begin{equation*}
{}_2F_1[a,b,c;x]:=\sum_{k=0}^\infty \frac{(a)_k(b)_k}{(c)_k} \cdot \frac{x^k}{k!},
\end{equation*}
where $(a)_k=\frac{\Gamma(a+k)}{\Gamma(a)}$. 
It is clear that when $c \notin \Z_{\geq 0}$,
the series ${}_2F_1[a,b,c;x]$ is a polynomial if and only if 
either $-a\in \Z_{\geq 0}$ or $-b \in \Z_{\geq 0}$. 
Moreover if $-a \in \Z_{\geq 0}$ and $-b, -c \notin \Z_{\geq 0}$, then
$\deg{}_2F_1[a,b,c;x]=a$. For the proof of the following identity,
see, for instance, \cite[Thm.~2.2.2]{AAR99}.

\begin{fact}[Gauss, 1812]\label{fact:Gauss}
For $\text{Re}(c-a-b)>0$, we have 
\begin{equation*}
{}_2F_1[a,b,c;1]
=\sum_{k=0}^\infty \frac{(a)_k(b)_k}{(c)_k} 
=\frac{\Gamma(c)\Gamma(c-a-b)}{\Gamma(c-a)\Gamma(c-b)}.
\end{equation*}
\end{fact}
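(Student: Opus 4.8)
The plan is to derive Fact~\ref{fact:Gauss} from Euler's integral representation of the Gauss series, followed by an analytic--continuation argument to dispose of the auxiliary hypotheses needed to set that integral up. First I would record Euler's integral: for $\mathrm{Re}(c)>\mathrm{Re}(b)>0$ and $x\in[0,1)$,
\begin{equation*}
{}_2F_1[a,b,c;x] = \frac{\Gamma(c)}{\Gamma(b)\,\Gamma(c-b)}\int_0^1 t^{b-1}(1-t)^{c-b-1}(1-xt)^{-a}\,dt.
\end{equation*}
This follows by expanding $(1-xt)^{-a}=\sum_{k\geq 0}\frac{(a)_k}{k!}(xt)^k$, which is uniformly convergent in $t\in[0,1]$ for fixed $x\in[0,1)$, integrating term by term, and using the Beta integral $\int_0^1 t^{b+k-1}(1-t)^{c-b-1}\,dt=\frac{\Gamma(b+k)\Gamma(c-b)}{\Gamma(c+k)}$ together with $\Gamma(b+k)/\Gamma(b)=(b)_k$ and $\Gamma(c)/\Gamma(c+k)=1/(c)_k$.

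Next I would let $x\to 1^-$. On the right-hand side, under the hypothesis $\mathrm{Re}(c-a-b)>0$ the integrand is dominated, uniformly for $x$ in a left neighbourhood of $1$, by a fixed integrable function of $t$: near $t=1$ one has $|1-xt|^{-\mathrm{Re}(a)}\leq C(1-t)^{-\mathrm{Re}(a)}$ when $\mathrm{Re}(a)>0$, and $(1-t)^{-\mathrm{Re}(a)}$ is integrable against $t^{\mathrm{Re}(b)-1}(1-t)^{\mathrm{Re}(c-b)-1}$ exactly when $\mathrm{Re}(c-a-b)>0$ (the case $\mathrm{Re}(a)\leq 0$ being trivial). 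Hence by dominated convergence the integral tends to $\int_0^1 t^{b-1}(1-t)^{c-a-b-1}\,dt=B(b,c-a-b)=\frac{\Gamma(b)\Gamma(c-a-b)}{\Gamma(c-a)}$. On the left-hand side, the same hypothesis forces $|(a)_k(b)_k/((c)_k k!)|\sim C k^{-\mathrm{Re}(c-a-b)-1}$ by Stirling, so the series at $x=1$ converges absolutely; Abel's theorem then gives $\lim_{x\to 1^-}{}_2F_1[a,b,c;x]={}_2F_1[a,b,c;1]$. Combining the two sides,
\begin{equation*}
{}_2F_1[a,b,c;1] = \frac{\Gamma(c)}{\Gamma(b)\Gamma(c-b)}\cdot\frac{\Gamma(b)\Gamma(c-a-b)}{\Gamma(c-a)} = \frac{\Gamma(c)\Gamma(c-a-b)}{\Gamma(c-a)\Gamma(c-b)}.
\end{equation*}

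Finally I would remove the restriction $\mathrm{Re}(c)>\mathrm{Re}(b)>0$ by analytic continuation. On the connected open set $\{(a,b,c):\mathrm{Re}(c-a-b)>0,\ c\notin\Z_{\leq 0}\}$ the left-hand side is holomorphic, since the defining series converges locally uniformly there by the Stirling estimate above, and the right-hand side is holomorphic as well (no poles are hit); as the two agree on the nonempty open subset where additionally $\mathrm{Re}(c)>\mathrm{Re}(b)>0$, they agree throughout. In the only case needed in this paper, namely $a=-k$ with $k\in\Z_{\geq 0}$, the series terminates and the statement collapses to the Chu--Vandermonde identity ${}_2F_1[-k,b,c;1]=(c-b)_k/(c)_k$, which admits an elementary self-contained proof by induction on $k$ or by coefficient extraction from a product of binomial series, so that branch of the argument needs nothing beyond finite algebra. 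I expect the only genuinely delicate points to be the uniform domination of the integrand near $t=1$ and the bookkeeping of parameter ranges in the continuation step; everything else is routine manipulation of Gamma functions.
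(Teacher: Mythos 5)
The paper does not prove this Fact; it is quoted as a classical result with a citation to \cite[Thm.~2.2.2]{AAR99}, so there is no internal argument to compare your plan against. Your route is the standard one: establish Euler's integral representation term by term via Beta integrals, pass to the limit $x\to 1^-$ on the right-hand side by dominated convergence (the domination near $t=1$ resting precisely on $1-xt\geq 1-t$ and the hypothesis $\mathrm{Re}(c-a-b)>0$) and on the left-hand side by Abel's theorem after checking absolute convergence from the Stirling asymptotic $|(a)_k(b)_k/((c)_k k!)|\sim C\,k^{-\mathrm{Re}(c-a-b)-1}$, and finally remove the auxiliary restriction $\mathrm{Re}(c)>\mathrm{Re}(b)>0$ by holomorphic continuation over the connected parameter domain $\{\mathrm{Re}(c-a-b)>0,\ c\notin\Z_{\leq 0}\}$. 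Each step is correct, and you have correctly identified the two delicate points (the dominating function near $t=1$ and the connectedness/holomorphy needed for the continuation). Your closing remark is also well taken and worth recording: in this paper the Fact is invoked only to evaluate $u_{4k}(1)={}_2F_1[-k,-k+\frac{1}{4},\frac{3}{4};1]$, a terminating series, so the finite Chu--Vandermonde identity ${}_2F_1[-k,b,c;1]=(c-b)_k/(c)_k$ (provable by elementary algebra or induction) would suffice and would make that step of the paper self-contained with no limiting process at all.
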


Define a second-order differential operator
\begin{equation*}
D{[a,b,c; x]}:=x(1-x)\frac{d^2}{dx^2} + (c-(a+b+1)x)\frac{d}{dx}- ab,
\end{equation*}
so that the equation
$D{[a,b,c;x]}f(x)=0$ is Euler's hypergeometric differential equation.
We put
\begin{equation}\label{eqn:uv1}
u'_{[a,b,c]}(x):={}_2F_1[a,b,c;x]\quad \text{and}\quad
v'_{[a,b,c]}(x):=x^{1-c}{}_2F_1[a-c+1, b-c+1, 2-c;x].
\end{equation}

It is well-known that if $c \notin \Z$, then
$u'_{[a,b,c]}(x)$ and $v'_{[a,b,c]}(x)$ form 
a fundamental set of solutions to 
Euler's hypergeometric differential equation 
$D_{[a,b,c;\; x]}f(x)=0$.
(See, for instance, \cite[Sect.\ 3]{Oshima13}.)

\subsection{Proof of Proposition \ref{prop:SolXcY}}
\label{subsec:appendix2}

We now define another second-order differential operator
\begin{equation*}
T[n;t]= (1-t^4)\frac{d^2}{dt^2}+2(n-1)t^3\frac{d}{dt} - n(n-1)t^2
\end{equation*}
so that $d\pi_n((\XcY)^\flat) = \frac{\sqrt{-1}}{2}T[n;t]$ (see \eqref{eqn:dpi7}).
Thus, in order to solve the differential equation $d\pi_n((\XcY)^\flat)g(t)=0$,
it suffices to solve $T[n;t]g(t)=0$.
Lemma \ref{lem:HGE} below shows the parameters $a_0, b_0$, and $c_0$
such that the equation $T[n;t]g(t)=0$ is identified with 
Euler's hypergeometric differential equation $D{[a_0,b_0, c_0;x]}f(x)=0$.

\begin{lem}\label{lem:HGE}
By change of variables $x=t^4$, the differential operator
$T[n;t]$ is given by 
$T[n;t] =16t^2D{[-\frac{n}{4}, -\frac{n-1}{4}, \frac{3}{4};t^4]}$.
\end{lem}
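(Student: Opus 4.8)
The plan is to verify Lemma~\ref{lem:HGE} by a direct change of variables, which is the natural and essentially forced approach: we have the second-order operator
\begin{equation*}
T[n;t]= (1-t^4)\frac{d^2}{dt^2}+2(n-1)t^3\frac{d}{dt} - n(n-1)t^2,
\end{equation*}
and we want to rewrite it in the variable $x=t^4$. First I would record the chain-rule identities for the substitution $x = t^4$, namely $\frac{dx}{dt}=4t^3$, so that $\frac{d}{dt}=4t^3\frac{d}{dx}$ and
\begin{equation*}
\frac{d^2}{dt^2}=16t^6\frac{d^2}{dx^2}+12t^2\frac{d}{dx}.
\end{equation*}

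Next I would substitute these into $T[n;t]$ and collect terms. The $\frac{d^2}{dx^2}$-coefficient becomes $(1-t^4)\cdot 16t^6 = 16t^6(1-x) = 16t^2\cdot x(1-x)$ after writing $t^6 = t^2\cdot t^4 = t^2 x$. The $\frac{d}{dx}$-coefficient is $(1-t^4)\cdot 12t^2 + 2(n-1)t^3\cdot 4t^3 = 12t^2(1-x) + 8(n-1)t^6 = t^2\bigl(12(1-x) + 8(n-1)x\bigr)$; one then checks this equals $16t^2\bigl(\tfrac34 - (a_0+b_0+1)x\bigr)$ with $a_0=-\tfrac n4$, $b_0=-\tfrac{n-1}{4}$, since $a_0+b_0+1 = 1 - \tfrac{2n-1}{4} = \tfrac{5-2n}{4}$, and $16\cdot\tfrac{5-2n}{4} = 4(5-2n) = 20-8n$, while $12(1-x)+8(n-1)x$ has $x$-coefficient $-12+8(n-1) = 8n-20$; the signs and the constant $16\cdot\tfrac34 = 12$ match. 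Finally the zeroth-order term is $-n(n-1)t^2 = 16t^2\cdot\bigl(-\tfrac{n(n-1)}{16}\bigr) = 16t^2\cdot(-a_0b_0)$ because $a_0 b_0 = \tfrac{n}{4}\cdot\tfrac{n-1}{4} = \tfrac{n(n-1)}{16}$. Assembling the three pieces gives $T[n;t] = 16t^2\Bigl(x(1-x)\frac{d^2}{dx^2} + (\tfrac34 - (a_0+b_0+1)x)\frac{d}{dx} - a_0 b_0\Bigr) = 16t^2 D[-\tfrac n4,-\tfrac{n-1}{4},\tfrac34;t^4]$, as claimed.

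There is no real obstacle here beyond bookkeeping; the only mild subtlety worth flagging is the factor $t^6$ versus $t^2$, i.e.\ remembering to pull out $t^2$ uniformly so that what remains is a polynomial-coefficient operator in $x$ of the standard hypergeometric shape. I expect the step most prone to sign errors is matching the first-order coefficient, so I would double-check that $a_0+b_0+1 = \tfrac{5-2n}{4}$ and compare against $12(1-x)+8(n-1)x$ carefully. Once Lemma~\ref{lem:HGE} is in hand, Proposition~\ref{prop:SolXcY} follows by combining it with the fundamental system $u'_{[a,b,c]}(x), v'_{[a,b,c]}(x)$ from \eqref{eqn:uv1} (valid since $c_0=\tfrac34\notin\Z$), translating back via $x=t^4$ to get $u_n(t), v_n(t)$ in \eqref{eqn:uv}, and then using the polynomiality criterion for ${}_2F_1$ recalled in Section~\ref{subsec:appendix1} together with the degree bound $\deg p(t)\le n$ to determine exactly which of $u_n(t), v_n(t)$ lie in $\Pol_n[t]$ according to $n \bmod 4$.
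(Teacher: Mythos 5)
Your computation is correct and is exactly the "routine computation" the paper alludes to: the chain rule gives $\frac{d}{dt}=4t^3\frac{d}{dx}$ and $\frac{d^2}{dt^2}=16t^6\frac{d^2}{dx^2}+12t^2\frac{d}{dx}$, and substituting and pulling out the common factor $16t^2$ recovers $D[-\tfrac{n}{4},-\tfrac{n-1}{4},\tfrac{3}{4};t^4]$ term by term. The paper's proof is the one-line statement "This follows from a routine computation," so your proposal is the same argument, just spelled out.
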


\begin{proof}
This follows from a routine computation.
\end{proof}

We set
\begin{equation*}
u_n(t) :=u'_{[-\frac{n}{4}, -\frac{n-1}{4}, \frac{3}{4}]}(t^4)\quad \text{and} \quad
v_n(t) :=v'_{[-\frac{n}{4}, -\frac{n-1}{4}, \frac{3}{4}]}(t^4),
\end{equation*}
namely,
\begin{equation}\label{eqn:uv2}
u_n(t)= {}_2F_1[-\frac{n}{4}, -\frac{n-1}{4}, \frac{3}{4};t^4]
\quad \text{and} \quad
v_n(t)= t{}_2F_1[-\frac{n-1}{4}, -\frac{n-2}{4}, \frac{5}{4};t^4].
\end{equation}
We now give a proof of Proposition \ref{prop:SolXcY}.

\begin{proof}[Proof of Proposition \ref{prop:SolXcY}]
Since $\frac{3}{4} \notin \Z$, 
it follows from Lemma \ref{lem:HGE} that 
the functions
$u_n(t)$ and $v_n(t)$ form a fundamental set of solutions 
to the equation $d\pi_n((\XcY)^\flat)g(t) = \frac{\sqrt{-1}}{2}T[n;t]g(t)=0$.
Moreover \eqref{eqn:uv2} shows that
if $n \equiv 3 \pmod 4$, then neither $u_n(t)$ nor $v_n(t)$ is a polynomial;
for $n \equiv 0, 1, 2 \pmod 4$, we have the following.
\begin{alignat*}{4}
&\mathrm{(1)}\; 
&&n\equiv 0 \pmod 4:&&\; u_n(t)&&  \in \Pol_n[t].\hspace{3.2in}\\
& \mathrm{(2)}\;
&&n\equiv 1 \pmod 4:&& \; u_n(t), v_n(t)&& \in \Pol_n[t].\\
& \mathrm{(3)}\; 
&& n\equiv 2 \pmod 4:&& \; v_n(t)&& \in \Pol_n[t].
\end{alignat*}
Since $\Sol_{(\XcY)}(n)$ is the space of polynomial solutions to 
$d\pi_n((\XcY)^\flat)g(t) =0$, this concludes the proposition.
\end{proof}


\textbf{Acknowledgements.}
Part of this research was conducted during a visit of the first author
at the Department of Mathematics of Aarhus University
and a visit of the second author at the Graduate School of 
Mathematical Sciences of the University of Tokyo. 
They are grateful for their support and warm hospitality during their stay.

The authors are thankful to
Toshio Oshima, Toshihiko Matsuki, Kyo Nishiyama, Hiroyuki Ochiai, Kenji Taniguchi, 
and Toshiyuki Kobayashi for fruitful interactions on this work.
Their sincere thanks also go to Anthony Kable for his valuable comments
on a manuscript of this work.

The first author was partially supported by JSPS
Grant-in-Aid for Young Scientists (B) (26800052).


\bibliographystyle{amsplain}
\bibliography{KuOrsted}


\end{document}